\def\dual                 {{\vee}}
\def\rk                 {{\rm rk}}
\def\ii                 {{\rm i}}
\def\ee                 {{\rm e}}
\def\BH		{{\rm Berglund-H\"ubsch }}
\def\BB		{{\rm Batyrev-Borisov }}
\def\F		{{\rm Fock}}
\def\ZZ                 {{\mathbb Z}}
\def\CC                 {{\mathbb C}}
\def\QQ                 {{\mathbb Q}}
\def\La    {{\mathcal L}}
\newtheorem{lemma}{Lemma}[subsection]
\newtheorem{theorem}[lemma]{Theorem}
\newtheorem{corollary}[lemma]{Corollary}
\newtheorem{proposition}[lemma]{Proposition}
\theoremstyle{definition}
\newtheorem{definition}[lemma]{Definition}
\newtheorem{remark}[lemma]{Remark}
\theoremstyle{remark}
\newtheorem*{proof*}{Proof}
\title{Berglund-H\"ubsch mirror symmetry via vertex algebras}
\author{Lev A. Borisov}
\address{Mathematics Department, Rutgers University, 110 Frelinghuysen Rd,
Piscataway, NJ 08854, email:borisov@math.rutgers.edu}
\begin{document}

\begin{abstract}We give a vertex algebra proof of the Berglund-H\"ubsch duality 
of nondegenerate invertible potentials.
We suggest a way to unify it with the Batyrev-Borisov duality of reflexive Gorenstein
cones.
\end{abstract}

\maketitle

\section{Introduction}\label{sec.intro}
Ever since its been first discovered in the early 1990's, mirror symmetry 
served as an inspiration to algebraic and symplectic geometers.  The 
original physical motivation behind it centers around the notion of 
$N=(2,2)$ superconformal field theory, which is a very rich and only
partially axiomatized structure. There are \emph{physical} methods
of assigning such theories to various combinatorial and/or geometric data.
In certain instances, the theories one obtains from two different 
sets of data are isomorphic via a very special \emph{mirror involution}, 
which leads to a deep connection between the two sets of data. The earliest 
example was the prediction of the (virtual) number of rational curves 
of given degree on a generic quintic threefold \cite{mirror}.

\medskip
Most classical treatments of mirror symmetry revolve around 
the calculation of the so-called $A$ and $B$ chiral
rings which are particular substructures of $N=(2,2)$ superconformal field 
theory. A mirror set of data is characterized by the property 
that the two rings are interchanged. The $A$ ring of
one set of data is supposed to be isomorphic to the $B$ ring
of the mirror set of data and vice versa. 
In many cases these two rings can be constructed mathematically from
the initial data, even while the whole theory can only be constructed physically. 
For example, one can start with a Calabi-Yau manifold $X$ with a 
complexified K\"ahler class $[w]$. Then the $A$ ring is 
the quantum cohomology of $X$ with the parameters
specialized to $[w]$. The $B$ ring is the cohomology of 
the exterior algebra of the tangent bundle of $X$.

\medskip
In the examples of interest, the $A$ and $B$ rings of the 
theory come with a double grading and with a natural
vector space isomorphism between them. If $\hat c>0$ 
is the central charge of the theory (equal to
the dimension of $X$ in the above example),
then this isomorphism  
sends a $(p,q)$-graded piece of the $A$ ring to the $(\hat c-p,q)$ graded piece
of the $B$ ring. Of course, this isomorphism is not compatible
with the product structures.

\medskip
Mirror symmetry construction of Batyrev \cite{Bat.dual}
and its generalization to Calabi-Yau complete intersections in toric varieties
known as \BB construction is one the best understood settings
of mirror symmetry. In contrast, the \BH construction of \cite{BeHu}
has unfortunately received too little attention, despite its 
appealing simplicity. Recently, papers of Krawitz \cite{Krawitz} 
and Chiodo-Ruan \cite{Chiodo-Ruan} partially corrected this
injustice by proving the conjectural mirror symmetry of \BH
at the level of double graded dimensions, and by connecting 
the Landau-Ginzburg version of the theory to the geometric
notion of orbifold cohomology.  Importantly, Krawitz was able to
define the dual potential and group of \BH construction in full 
generality, beyond the original examples of Berglind and H\"ubsch.
This paper hopes to further advance the understanding of 
Berglund-H\"ubsch-Krawitz duality
and to also sketch a path that will likely lead to a combined
setting that includes both \BB and \BH constructions as special 
cases. Specifically, we find that the vertex algebra approach to mirror 
symmetry, originally developed for \BB setting in \cite{borvert}
and \cite{chiralrings} can be applied with some modifications
to the \BH setting. It allows us to reprove the result of \cite{Krawitz}
and moreover to show the ring isomorphism between the $A$ ring
of  a \BH potential and the $B$ ring of the dual potential (with the 
appropriate choices of orbifoldizations).
In the process it leads to a natural combinatorial
setup for the unification of the two constructions.

\medskip
The vertex algebra approach to mirror symmetry aims
to construct a larger algebraic structure which contains
the $A$ and $B$ rings of the theory as two subspaces,
with the induced structure of supercommutative double
graded rings. The mirror set of data gives the same
larger structure, up to a natural involution that has the 
effect of switching $A$ and $B$ rings. 
Specifically, this larger algebraic structure is known as vertex (in some
treatments \emph{chiral}) algebra with $N=2$ structure, 
and the $A$ and $B$ rings are the chiral rings of this algebra,
in the sense of \cite{LVW}.
From the physics view point this vertex algebra is the state space of the half-twisted
theory. We recall the definition of these algebras in Section
\ref{sec.vertprelim}. The aforementioned
isomorphism of $A$ and $B$ rings of \emph{the same theory}
comes from a natural \emph{physical} construction called spectral flow.
The original calculations of \cite{borvert} have been heavily inspired by 
the chiral de Rham complex construction of Malikov, Schechtman and Vaintrob,
see \cite{MSV}. However, the setting of this paper is more combinatorial, since 
the underlying geometry is often not clear.

\medskip
The structure of the paper is as follows. 
In Section \ref{sec.BH-BB-comb} we recall the definitions of \BH potentials,
following \cite{Krawitz}, and then rephrase the construction
in the terms that are similar to those of \BB duality. Specifically, we encode
a pair of dual potentials and groups $(W,G)$ and $(W^\dual,G^\dual)$ by a pair of dual 
lattices $M$ and $N$ and collections of elements $\Delta$ and 
$\Delta^\dual$ in these lattices. The pairings of elements of $\Delta$ and $\Delta^\dual$
encode the matrix of degrees that occurs in the definition of \BH potentials.
The elements of $\Delta$ and the elements of $\Delta^\dual$
generate cones $K_M$ and $K_N$ respectively. While $K_M$ and $K_N$ 
are not quite dual to each other (as would be the case in \BB setting) they are close to 
being dual, because of the nondegeneracy of the potentials.

\medskip
In Section \ref{sec.V} we give a novel description of $A$ and $B$ rings
of \BH construction by what essentially amounts to a switch to 
logarithmic coordinates. We describe the $A$ and $B$ rings of $(W,G)$ as 
cohomology of the following complex, which is analogous to the one in \cite{BM}.

\medskip\noindent
{\bf Proposition \ref{A-iso}.}
The $A$ ring of \BH construction, with components twisted
by certain one-dimensional spaces, is naturally isomorphic 
to the cohomology of 
$$
\CC[(K_N^\dual\oplus K_N)_0]\otimes \Lambda^*(M_\CC)
$$
with respect to 
$$
d^A:= \sum_{m\in\Delta} [m]\otimes (\wedge m)
+\sum_{n\in \Delta^\dual} [n]\otimes ({\rm contr.} n).
$$

\medskip
Here $\CC[(K_N^\dual\oplus K_N)_0]$ is the quotient of the 
semigroup ring of $K_N^\dual\oplus K_N$ by the ideal generated by 
monomials $m\oplus n$ with $m\cdot n >0$. 
The $B$ ring is similarly described in Proposition \ref{B-iso} by replacing $\Lambda^*(M_\CC)$
$\Lambda^*(N_\CC)$ and switching contractions and exterior multiplications.
We also observe that the natural double grading on these spaces, which was first 
considered in \cite{BM} coincides with the double grading of $A$ and $B$ 
rings described in \cite{Krawitz}, following the more general work of Kaufmann
\cite{Kau3}.
In the language of Section \ref{sec.V}, the difference between the $A$ ring of $(W,G)$ and the $B$
ring of $(W^\dual,G^\dual)$ amounts to a switch from $\CC[(K_N^\dual\oplus K_N)_0]$ to 
$\CC[(K_M\oplus K_M^\dual)_0]$. While Sections \ref{sec.BH-BB-comb} and 
\ref{sec.V} are free of any vertex algebra techniques and are rather elementary,  we will
need the full machinery of vertex algebras to construct a natural isomorphism between these 
spaces.

\medskip
In Section \ref{sec.vertprelim} we give a brief review of vertex algebras and $N=2$
structures. We also recall lattice vertex algebras $\F_{M\oplus N}$ constructed
from pairs of dual lattices $M$ and $N$. 

\medskip
In Section \ref{sec.BB-BH-voa} we define the vertex algebras of \BH mirror symmetry 
and prove their first properties. Specifically, for a pair $(f,g)$ of generic coefficient functions
$f:\Delta\to\CC$ and $g:\Delta^\dual\to \CC$ we define
$V_{f,g}$ to be the cohomology of the lattice vertex algebra $\F_{M\oplus N}$ by 
the differential
$$
D_{f,g}:={\rm Res}_{z=0}(\sum_{m\in\Delta}f(m) m^{ferm}(z)
\ee^{\int m^{bos}(z)}
+\sum_{n\in \Delta^\dual} g(n)n^{ferm}(z)
\ee^{\int n^{bos}(z)}).
$$
We prove an analog of the Key Lemma of \cite{borvert} which states that $\F_{M\oplus N}$ 
can be replaced by $\F_{K_M\oplus N}$ or $\F_{M\oplus K_N}$. This is a nontrivial
consequence of the nondegeneracy of the potential, and it is absolutely crucial
for the subsequent discussion.

\medskip
In Section \ref{sec.main} we use the results of Section \ref{sec.BB-BH-voa} and 
\cite{chiralrings} to show that each of the 
chiral rings of $V_{f,g}$ is naturally isomorphic to the cohomology of a pair of complexes.
When $(f,g)={(\bf 1,\bf 1})$ these complexes are precisely the ones
considered in Section \ref{sec.V}, which shows the isomorphism between the $A$ and
$B$ rings of the dual \BH potentials.
The main result of the paper is Theorem \ref{main}.

\medskip\noindent
{\bf Theorem \ref{main}.}
For generic choices of $f$ and $g$
the $N=2$ vertex algebra $D_{f,g}$ is 
of $\sigma$-model type. 
In particular, this algebra is of 
$\sigma$-model type for $(f,g)=({\bf 1},{\bf 1})$.
The $B$ ring of $V_{{\bf 1},{\bf 1}}$ can be identified with 
the $B$ ring of $(W,G)$ and with the $A$ ring of
$(W^\dual,G^\dual)$. The $A$ ring of $V_{{\bf 1},{\bf 1}}$ can
be identified with the $A$ ring of $(W,G)$ and with the $B$ ring of
$(W^\dual,G^\dual)$.

\medskip
One advantage of our method, as compared to that of \cite{Krawitz} is that it does not  make
explicit use of the classification of invertible potentials of Kreuzer and Skarke \cite{KS}. 
This does not make our argument easier,  given the extensive use of the vertex algebra 
techniques. However, it does make our approach more natural and it 
allows us to attempt to unify \BH and \BB constructions by 
assuming sufficient conditions that ensure that the argument of Sections 
\ref{sec.BB-BH-voa} and \ref{sec.main} is stil applicable. This unification is explored
in Section \ref{sec.unification}. The condition is a generalization of nondegeneracy
of the potential to the case when the number of elements in $\Delta$ and $\Delta^\dual$
is larger than the rank of 
the lattice.
 We give two equivalent formulations in Definition \ref{unified}
and Proposition \ref{prop.unified}. Finally, in Section \ref{sec.open} we state several natural open problems.

\medskip
The reader should be forewarned regarding our 
notation choices. We deliberately use the notations
for \BH setting that are analogous to the \BB setting.
Eventually, the two pictures are joined in Section \ref{sec.unification}, but until that
point we try to make it clear from the context which setting
we are considering.

\medskip
\emph{Acknoledgements.} I would like to thank Max Kreuzer 
for introducing me to the \BH construction and for useful comments
on the first version of the paper. The preprints
\cite{Chiodo-Ruan} and \cite{Krawitz} were also a major influence.
I would like to thank Ralph Kaufmann for useful comments on the first 
version of the paper. This work was partially supported by the NSF Grant 1003445.

\section{Berglund-H\"ubsch and Batyrev-Borisov
 constructions}\label{sec.BH-BB-comb}

The goal of this section is to restate the \BH construction
of mirror potentials in the terms that are similar to Batyrev's 
reflexive polyhedra construction and, more generally, 
to \BB dual reflexive cones construction.

\subsection{Berglund-H\"ubsch potentials}
The mirror symmetry construction of Berglund and H\"ubsch
begins with a nondegenerate polynomial potential
$$W(x_1,\ldots,x_d)=\sum_{i=1}^d c_i\prod_{j=1}^d x_j^{a_{ij}}$$
with invertible integer matrix $(a_{ij})$,
where the coefficients $c_i$ can be assumed to equal $1$ 
without loss of generality. The variables $x_j$ are assumed
to be given positive rational degrees  $q_j$ which make
$W$ homogeneous of degree $1$, i.e. we have
$$
\sum_{j=1}^d a_{ij}q_j = 1 
$$
for all $i$. Nondegeneracy of the potential means that the 
hypersurface $W=0$ in $\CC^d$ is smooth away from
the origin. It is a very restrictive condition, and complete 
classification of nondegenerous potentials is given in \cite{KS}.

\medskip
Consider the group ${\rm Aut}(W)$ of diagonal automorphisms 
\begin{equation}\label{gamma}
\gamma:x_j\mapsto \gamma_j x_j
\end{equation}
which preserve the potential $W$. This is a finite abelian group,
which contains two natural subgroups. First,
${\rm SL}_d\cap {\rm Aut}(W)\subseteq {\rm Aut}(W)$ is 
characterized by $\prod_j \gamma_j=1$. Second, there is a subgroup of ${\rm Aut}(W)$ 
generated by the automorphism
$$
x_j\mapsto \exp(2\pi\ii q_j) x_j
$$
which is called the exponential grading operator in \cite{Krawitz}.
We will be mostly interested in the case when the second 
of these groups is contained in the first one, which means
that the exponential graded operator acts with determinant one.
This translates into
\begin{equation}\label{CYBH}
\sum_{j=1}^d q_j =k\in \ZZ_{>0}.
\end{equation}

\begin{remark}
Condition \eqref{CYBH} is a  generalization of the Calabi-Yau
condition $\sum_{j=1}^d q_j =1$, see \cite{Chiodo-Ruan}. 
The case $\sum_{j=1}^d q_j=k$
will turn out to be analogous to Batyrev-Borisov construction
for Calabi-Yau complete intersections of $k$ hypersurfaces. We will thus 
continue to refer to the above condition as the Calabi-Yau condition.
\end{remark}

As part of the combinatorial data of \BH construction one
fixes a subgroup $G\subseteq {\rm Aut}(W)$. 

\subsection{Dual potential and dual group}
It follows from the classification of \cite{KS} that if
$$
W=\sum_{i=1}^d\prod_{j=1}^{d}x_j^{a_{ij}}
$$
is a nondegenerate potential, then 
$$
W^\dual=\sum_{i=1}^d\prod_{j=1}^{d}y_j^{a_{ji}}
$$
is also nondegenerate.

\medskip
Let $G$ be a subgroup of ${\rm Aut}(W)$. Then Krawitz
\cite{Krawitz} defines the dual group $G^\dual\subseteq {\rm Aut}
(W^\dual)$. We describe his construction while trying to keep
our notations as compatible as possible.
Define $A_W=(a_{ij})$. Then consider
the inverse matrix $A_W^{-1}$ and define $\rho_i$ and 
$\bar\rho_i$ to be its $i$-th column and $i$-th row respectively.
Abusing the notation slightly, we will also denote by 
$\rho_i$ the corresponding element of ${\rm Aut}(W)$ 
obtained by multiplying $x_j$ by $\exp(2\pi\ii (\rho_i)_j)$.
Similarly, $\bar \rho_j$ will also be used to denote an element
of ${\rm Aut}(W^\dual)$.

\begin{definition}\cite{Krawitz}\label{KrawitzDual}
The dual group $G^\dual$ is defined by
$$
G^\dual:= \left\{
\prod_{i=1}^d \bar \rho_i^{r_i}~\Big\vert~
[r_1,\cdots,r_d]A_W^{-1}\left[\begin{array}{c}a_1\\
\vdots \\ a_d 
\end{array}\right]
\in \ZZ~{\rm for~all~}\prod_{i=1}^d \rho_i^{a_i}\in G
\right\}.
$$
\end{definition}

\subsection{Combinatorial reformulation of the Berglund-H\"ubsch potential and group}
\label{ssec.combref}

We will now reformulate the data of Berglund-H\"ubsch potential
$W$ and the group $G$ in combinatorial terms that resemble
the setting of Batyrev-Borisov mirror symmetry construction.

\medskip
Consider the free abelian groups $M_0$ and $N_0$ 
with bases $(u_i),i=1,\ldots,d$ and $(v_j),j=1,\ldots,d$.
Define an integer, nondegenerate (but not unimodular) pairing on these lattices by 
$$u_i\cdot v_j := a_{ij}$$
where $a_{ij}$ are the exponents of the potential $W$, as
in the previous subsection.

\begin{proposition}\label{BHtoric}
The choice of the group $G$ and its dual $G^\dual$ is
naturally equivalent to the choice of  suplattices $M\supseteq M_0$
and $N\supseteq N_0$ such that the induced pairing 
on $M$ and $N$ is integer and unimodular, i.e. $M$ and $N$
are dual lattices.
\end{proposition}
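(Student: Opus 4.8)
The plan is to convert the entire statement into a computation with dual lattices, by first realizing both automorphism groups as quotients of dual lattices. A diagonal automorphism $\gamma:x_j\mapsto\exp(2\pi\ii t_j)x_j$ preserves $W$ precisely when $\sum_j a_{ij}t_j\in\ZZ$ for every $i$; recording the exponents as the element $t=\sum_j t_j v_j\in N_\QQ:=N_0\otimes\QQ$, this condition reads $\langle u_i,t\rangle\in\ZZ$ for all $i$, i.e.\ $t\in M_0^\dual$, the dual lattice of $M_0$ inside $N_\QQ$. Since two parameters define the same automorphism exactly when they differ by an element of $N_0$, this identifies ${\rm Aut}(W)=M_0^\dual/N_0$. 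The identical computation applied to $W^\dual$, whose matrix is the transpose of $A_W$, gives ${\rm Aut}(W^\dual)=N_0^\dual/M_0$, with the exponents of an automorphism of $W^\dual$ now recorded as an element of $M_\QQ:=M_0\otimes\QQ$.

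First I would use these identifications to set up the lattice side. Subgroups $G\subseteq{\rm Aut}(W)=M_0^\dual/N_0$ are in bijection with lattices $N$ satisfying $N_0\subseteq N\subseteq M_0^\dual$, via $G=N/N_0$. Given such an $N$, set $M:=N^\dual\subseteq M_\QQ$. Reflexivity of lattice duality together with $N_0\subseteq N\subseteq M_0^\dual$ gives $M_0\subseteq M\subseteq N_0^\dual$, and by construction $(M,N)$ is a dual pair with $M\supseteq M_0$ and $N\supseteq N_0$. Conversely, any dual pair with these inclusions satisfies $M_0\subseteq M=N^\dual\subseteq N_0^\dual$, so it arises in this way; thus dual pairs correspond bijectively to subgroups $G$.

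The heart of the argument, and the one step that is not purely formal, is to check that under this dictionary the lattice $M=N^\dual$ is exactly Krawitz's dual group $G^\dual$. For this I would first identify the generators in Definition \ref{KrawitzDual}: the columns $\rho_i$ of $A_W^{-1}$ satisfy $\langle u_k,\rho_i\rangle=\delta_{ki}$, so $(\rho_i)$ is the $\ZZ$-basis of $M_0^\dual$ dual to $(u_i)$, while the rows $\bar\rho_i$ form the basis of $N_0^\dual$ dual to $(v_i)$. A direct computation then yields $\langle\bar\rho_i,\rho_k\rangle=(A_W^{-1})_{ik}$, so that Krawitz's integrality condition $[r_1,\dots,r_d]A_W^{-1}[a_1,\dots,a_d]^{\rm t}\in\ZZ$ is literally the pairing $\langle\bar n,n\rangle\in\ZZ$, where $\bar n=\sum_i r_i\bar\rho_i\in N_0^\dual$ and $n=\sum_k a_k\rho_k$ represents the given element of $G$. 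Because $\bar n\in N_0^\dual$ already pairs integrally with $N_0$, imposing this condition for all $n$ representing $G$ is the same as requiring $\langle\bar n,N\rangle\subseteq\ZZ$, that is $\bar n\in N^\dual\cap N_0^\dual=N^\dual=M$. Hence $G^\dual=M/M_0$, and Krawitz's duality is identified with lattice duality $N\leftrightarrow N^\dual$.

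I expect the main obstacle to be precisely this last translation: aligning the two bases $(\rho_i)$ and $(\bar\rho_i)$ with $M_0^\dual$ and $N_0^\dual$ and verifying the pairing identity $\langle\bar\rho_i,\rho_k\rangle=(A_W^{-1})_{ik}$, after which the bilinear condition in Definition \ref{KrawitzDual} forces $\bar n\in N^\dual$ with no further choices. Everything else reduces to the standard facts that $(L^\dual)^\dual=L$ for a full-rank lattice and that duality reverses inclusions, so the remaining bookkeeping is routine.
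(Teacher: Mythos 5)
Your proposal is correct and follows essentially the same route as the paper: identify ${\rm Aut}(W)\cong M_0^\dual/N_0$ and ${\rm Aut}(W^\dual)\cong N_0^\dual/M_0$, recognize $(\rho_i)$ and $(\bar\rho_j)$ as the bases dual to $(u_i)$ and $(v_j)$ with pairing matrix $A_W^{-1}$, and conclude that Krawitz's integrality condition is exactly $\bar n\in N^\dual$. The only difference is that you spell out the details the paper compresses into ``the rest is clear.''
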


\begin{proof}
First observe that ${\rm Aut}(W)$ is naturally isomorphic
to $M_0^\dual/N_0$. The isomorphism sends 
$\gamma=(\gamma_i)$ in \eqref{gamma} to 
$N_0+\sum_{j=1}^d\frac 1{2\pi \ii} (\log \gamma_j ) v_j$.
Consequently, a choice of $G$ means a choice of 
lattice $N$ with $N_0\subseteq N\subseteq M_0^\dual$.
We have a similar identification of ${\rm Aut}(W^\dual)$
with $N_0^\dual/M_0$. It remains to show that the 
definition of $G^\dual$ in the previous subsection
is equivalent to considering $M=N^\dual$.
Indeed, $\rho_i$ correspond to the elements of the 
basis of $M_0^\dual$ which is dual to the basis $(e_i)$ of $M$
and $\bar\rho_j$ correspond to the basis of $N_0^\dual$
which is dual to the basis  $(v_j)$ of $N$. The pairing matrix
$(\bar\rho_i\cdot \rho_j)$ is equal by $A_W^{-1}$ and the 
rest is clear.
\end{proof}

\begin{remark}
The Berglund-H\"ubsch duality now simply means interchanging
the two lattices $M$ and $N$ together with the sets of
chosen elements $(u_i)$, $(v_j)$ in them. It is now obvious
that $(G^\dual)^\dual=G$.
\end{remark}

We can now give a simple combinatorial interpretation of 
the two subgroups of ${\rm Aut}(W)$ and the Calabi-Yau
condition.
\begin{definition}
We define elements $\deg\in N_0^\dual$ and 
$\deg^\dual\in M_0^\dual$ by 
$
u_i\cdot \deg^\dual = 1, ~\deg\cdot v_i = 1, ~{\rm for ~ all~}i
$.
\end{definition}

In the following proposition we implicitly assume the identification
of the data of $G$ and $G^\dual$ with the dual suplattices 
$M$ and $N$ in Proposition \ref{BHtoric}.
\begin{proposition}
The group $G$ lies in ${\rm SL}_d\cap {\rm Aut}(W)$ 
if and only if $\deg\in M$. The group $G$ contains 
the exponential grading operator if and only if $\deg^\dual\in N$.
The Calabi-Yau condition is equivalent to $\deg\cdot \deg^\dual
\in Z_{> 0}$.
\end{proposition}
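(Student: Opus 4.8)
The plan is to translate each condition on the group $G$ into a statement about lattice membership, using the isomorphism ${\rm Aut}(W)\cong M_0^\dual/N_0$ established in the proof of Proposition \ref{BHtoric}. Under that isomorphism a diagonal automorphism $\gamma$ with $\gamma_j=\exp(2\pi\ii t_j)$ corresponds to the class of $t:=\sum_{j=1}^d t_j v_j$ modulo $N_0$, and the chosen group $G$ corresponds to the quotient $N/N_0$ with $N_0\subseteq N\subseteq M_0^\dual$. The first step is to record two elementary identities that turn the defining functionals $\deg$ and $\deg^\dual$ into explicit expressions. From $\deg\cdot v_j=1$ one gets $\deg\cdot t=\sum_j t_j$ for any $t=\sum_j t_j v_j$. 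From the homogeneity relations $\sum_j a_{ij}q_j=1$ together with $u_i\cdot\deg^\dual=1$, one identifies $\deg^\dual=\sum_j q_j v_j$ inside $(N_0)_\QQ$, since both sides pair to $1$ against every basis vector $u_i$ and the $u_i$ span.

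For the first claim, since $G\subseteq{\rm Aut}(W)$ automatically, the condition is just $G\subseteq{\rm SL}_d$, i.e. $\prod_j\gamma_j=1$, i.e. $\sum_j t_j\in\ZZ$ for every representative $t\in N$. By the first identity this reads $\deg\cdot t\in\ZZ$ for all $t\in N$, which is precisely the statement $\deg\in N^\dual=M$. (Note the condition is automatic on $N_0$, since $\deg\cdot v_j=1$, so it descends to the quotient as expected.) For the second claim, the exponential grading operator $x_j\mapsto\exp(2\pi\ii q_j)x_j$ corresponds under the isomorphism to exactly $t=\sum_j q_j v_j=\deg^\dual$, so $G$ contains it iff $\deg^\dual\in N$. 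For the third claim, combining the two identities gives $\deg\cdot\deg^\dual=\deg\cdot\sum_j q_j v_j=\sum_j q_j$, so the Calabi-Yau condition \eqref{CYBH} is literally the assertion $\deg\cdot\deg^\dual=\sum_j q_j\in\ZZ_{>0}$, positivity being automatic from $q_j>0$.

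There is no deep obstacle here; the statement is a direct unwinding of the dictionary of Proposition \ref{BHtoric}. The only point requiring care is the lattice bookkeeping: one must keep track that $\deg$ naturally lives in $V_M:=(M_0)_\QQ$ and $\deg^\dual$ in $V_N:=(N_0)_\QQ$, that the chain $N_0\subseteq N\subseteq M_0^\dual$ dualizes to $M_0\subseteq M=N^\dual\subseteq N_0^\dual$, and that it is this containment $M\subseteq N_0^\dual$ which makes ``$\deg\in M$'' a sensible question for the element $\deg\in N_0^\dual$ in the first place. One should likewise check that the single pairing between the dual lattices $M$ and $N$ restricts compatibly to all the sublattices in play, extending the original pairing $M_0\times N_0\to\ZZ$. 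Once these identifications are laid out, each of the three equivalences follows immediately.
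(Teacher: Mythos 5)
Your proof is correct and follows essentially the same route as the paper: both translate the conditions through the dictionary of Proposition \ref{BHtoric}, use $\deg\cdot\bigl(\sum_j t_j v_j\bigr)=\sum_j t_j$ to identify ${\rm SL}_d$-membership with integrality of pairings against $N$ (hence $\deg\in N^\dual=M$), identify $\deg^\dual=\sum_j q_j v_j$ with the exponential grading operator, and compute $\deg\cdot\deg^\dual=\sum_j q_j$. Your extra care with the lattice chain $N_0\subseteq N\subseteq M_0^\dual$ and its dual is a harmless elaboration of what the paper leaves implicit.
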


\begin{proof}
The condition that $\deg\in M$ is equivalent to the statement
that $\deg$ has integer pairing with every 
$\sum_{j=1}^d\frac 1{2\pi \ii} (\log \gamma_j ) v_j$
for $\gamma=(\gamma_i)\in G$. This pairing is 
$\sum_{j=1}^d \frac 1{2\pi \ii} (\log \gamma_j )$, and its
integrality is equivalent to $\prod_j \gamma_j=1$.
The element $\deg^\dual$ equals
$\sum_j q_j v_j$, which in turns corresponds to the exponential 
grading operator.
Finally, $\deg\cdot \deg^\dual = \deg \cdot \sum_j q_j v_j
=\sum_j q_j$.
\end{proof}

\begin{corollary}
The group $G$ lies in $SL_d$ if and only if $G^\dual$ 
contains the exponential grading operator. The Calabi-Yau condition means
that there exists $G$ such that the corresponding 
lattices $M$ and $N$ satisfy both
$
\deg\in M
$
and $\deg^\dual\in  N$.
\end{corollary}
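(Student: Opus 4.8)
The plan is to reduce both assertions to the preceding proposition, exploiting the symmetry of the duality recorded in the Remark following Proposition \ref{BHtoric}: interchanging the dual lattices $M$ and $N$ together with the distinguished elements $\deg$ and $\deg^\dual$ exchanges $(W,G)$ with $(W^\dual,G^\dual)$.

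For the first assertion I would apply the preceding proposition to the dual data $(W^\dual,G^\dual)$ in place of $(W,G)$. Its clause on the exponential grading operator, now read for $W^\dual$, states that $G^\dual$ contains the exponential grading operator exactly when the distinguished element $\deg^\dual$ of the dual setup lies in the $N$-lattice of the dual setup. Under the duality these are respectively $\deg$ and $M$, so the clause becomes the equivalence of ``$G^\dual$ contains the exponential grading operator'' with $\deg\in M$. Since the ${\rm SL}_d$-clause of the preceding proposition reads $G\subseteq{\rm SL}_d\iff\deg\in M$, chaining the two equivalences gives the first statement; the translation is legitimate because $(G^\dual)^\dual=G$.

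For the second assertion I would argue both directions. If some $G$ yields dual lattices $M=N^\dual$ with $\deg\in M$ and $\deg^\dual\in N$, then integrality of the pairing between dual lattices forces $\deg\cdot\deg^\dual\in\ZZ$, while positivity is automatic since $\deg\cdot\deg^\dual=\sum_j q_j>0$ as computed in the proof of the preceding proposition; hence the Calabi-Yau condition \eqref{CYBH} holds. Conversely, assuming \eqref{CYBH}, I would exhibit $G$ explicitly by setting $N:=N_0+\ZZ\deg^\dual$. Since $u_i\cdot\deg^\dual=1$ for all $i$ we have $\deg^\dual\in M_0^\dual$, whence $N_0\subseteq N\subseteq M_0^\dual$ and $N$ defines an admissible subgroup via Proposition \ref{BHtoric}. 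Taking $M=N^\dual$, the same integrality $u_i\cdot\deg^\dual=1$ gives $M_0\subseteq M$, while $\deg\cdot v_j=1$ together with $\deg\cdot\deg^\dual=k\in\ZZ$ gives $\deg\in M$; and $\deg^\dual\in N$ holds by construction.

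I do not expect a serious obstacle here. The only genuine content is the observation that all four containment and integrality requirements collapse onto the single integrality $\deg\cdot\deg^\dual\in\ZZ$, so the main care lies in the bookkeeping of which lattice each of $\deg$, $\deg^\dual$, $u_i$ and $v_j$ belongs to, and in verifying that the explicitly constructed lattice $N$ stays within the admissible range $N_0\subseteq N\subseteq M_0^\dual$ dictated by Proposition \ref{BHtoric}.
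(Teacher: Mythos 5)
Your proposal is correct and follows exactly the route the paper intends: the paper's own proof is just ``Clear,'' meaning the corollary is read off from the preceding proposition via the $M\leftrightarrow N$, $\deg\leftrightarrow\deg^\dual$ duality symmetry, which is precisely what you carry out. Your explicit construction $N:=N_0+\ZZ\deg^\dual$ for the converse direction of the Calabi-Yau statement is the natural choice and correctly verifies all the required containments.
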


\begin{proof} Clear. \end{proof}

\begin{remark}
In what follows we will be mostly interested in the case when 
the group $G$ contains the exponential grading
operator and lies in ${\rm SL}_d$. In particular,
the potential $W$ must satisfy the Calabi-Yau condition.
We will refer to this situation as \BH construction of 
Calabi-Yau type of index $k$ where $k=\sum_jq_j$.
\end{remark}

We introduce the following notations to highlight the 
analogy between the combinatorial data of \BH
and \BB constructions.
\begin{definition}
Define the cones $K_M$ in $M$ and $K_N$ in $N$
by 
$$K_M:=\sum_{i} \QQ_{\geq 0}u_i,
~
K_N:=\sum_{j} \QQ_{\geq 0}v_j.
$$
\end{definition}

\begin{proposition}\label{defcones}
Consider the dual cones $K_M^\dual$ in $N$ and
$K_N^\dual$ in $M$. Then we have 
$K_M\subseteq K_N^\dual$ and 
$K_N\subseteq K_M^\dual$. 
\end{proposition}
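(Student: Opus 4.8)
The plan is to reduce both inclusions to the single fact that the pairings $u_i \cdot v_j$ are nonnegative. Recall that by definition $K_N^\dual = \{\, m \in M_\QQ : m \cdot n \geq 0 \text{ for all } n \in K_N \,\}$, and since $K_N$ is the cone generated by the $v_j$, membership in $K_N^\dual$ need only be tested on these generators: because the pairing is linear and a general element of $K_N$ is a nonnegative combination of the $v_j$, an element $m$ lies in $K_N^\dual$ if and only if $m \cdot v_j \geq 0$ for every $j$. Symmetrically, $K_M^\dual = \{\, n \in N_\QQ : u_i \cdot n \geq 0 \text{ for all } i \,\}$.

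The one genuine input is the observation that the entries $a_{ij} = u_i \cdot v_j$ are exactly the exponents occurring in the monomials of the potential $W = \sum_i \prod_j x_j^{a_{ij}}$, and hence are nonnegative integers. Everything else is formal convexity. I would state this nonnegativity explicitly, since it is where the definition of $W$ as an honest polynomial enters.

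Given this, I would take an arbitrary $m = \sum_i \mu_i u_i \in K_M$ with $\mu_i \geq 0$. For each $j$ we have $m \cdot v_j = \sum_i \mu_i a_{ij} \geq 0$, so $m \in K_N^\dual$, which proves $K_M \subseteq K_N^\dual$. The inclusion $K_N \subseteq K_M^\dual$ follows by the identical computation with the roles of $M$ and $N$, and of the $u_i$ and $v_j$, interchanged: for $n = \sum_j \lambda_j v_j$ with $\lambda_j \geq 0$ one gets $u_i \cdot n = \sum_j \lambda_j a_{ij} \geq 0$ for all $i$.

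There is no real obstacle here; the content of the statement is simply that $K_M$ and $K_N$ pair nonnegatively, which is automatic from $a_{ij} \geq 0$. The only point worth emphasizing is interpretive rather than technical: the inclusions are in general \emph{strict}, and this is exactly the precise sense in which $K_M$ and $K_N$ fail to be mutually dual cones. As foreshadowed in the introduction, the extent of this failure is governed by the nondegeneracy of the potential, which is the property that will later allow $K_M$ and $K_N$ to be used almost interchangeably with a full dual pair.
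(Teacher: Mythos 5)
Your proof is correct and is exactly the paper's argument: the paper's entire proof reads ``The statement follows from $a_{ij}\geq 0$,'' and you have simply written out the generator-by-generator verification that this one-line justification compresses. Nothing to add.
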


\begin{proof} The statement follows from $a_{ij}\geq 0$. \end{proof}

\begin{remark}
In the case of Fermat type potential (see \cite{KS}),
we have equalities in Proposition \ref{defcones}.
In general, the inclusions are strict. However, the 
nondegeneracy of the polynomial potential $W$ may
be viewed as the statement that the complements
are in some sense small.
\end{remark}

\begin{definition} Denote by $\Delta$ the set
$(u_i)$ and by $\Delta^\dual$ the set $(v_j)$.
\end{definition}

\begin{remark}
We may associate arbitrary nonzero elements $c_i$ to
the elements $u_i\in \Delta$, as in the original definition
of $W$. Similarly, we can associate arbitrary elements
to monomials in $W^\dual$.  These can all be rescaled 
to $1$.
\end{remark}

\subsection{Comparison with Batyrev-Borisov construction}
Recall that the discrete
combinatorial data of Batyrev-Borisov construction
consist of the pair of dual lattices $M$ and $N$ 
and dual reflexive Gorenstein cones $K$ in $M$ and $K^\dual$ 
in $N$.

\begin{definition} Let $M$ and $N$ be dual lattices.
The dual rational polyhedral cones $K$ in $M$ and $K^\dual$ in $N$
are called dual Gorenstein if there exist elements $\deg\in M$ 
and $\deg^\dual\in N$ so that the lattice generators $u$ of the rays of 
$K$ satisfy $u\cdot \deg^\dual=1$ and lattice generators $v$ 
of rays of $K^\dual$ satisfy $\deg\cdot v=1$.
\end{definition}

\begin{definition}\cite{chiralrings}
Define $\Delta$ to be the set of lattice elements $m\in K$ 
which satisfy $m\cdot \deg^\dual=1$ and $\Delta^\dual$
to be the set of lattice elements $n\in K^\dual$ such that
$\deg\cdot n=1$.
\end{definition}

\begin{definition}
The index of the pair of dual reflexive Gorenstein cones
is defined as $\deg\cdot\deg^\dual$. 
\end{definition}

\begin{remark}
The case of index one corresponds to the original
Batyrev duality, see \cite{Bat.dual}. The higher index $k$
case may or may not have a geometric underpinning.
In good cases, one side of the duality may
be related to a Calabi-Yau complete 
intersection of $k$ hypersurfaces. In the best case scenario,
both sides are related to a complete intersection. This best case
scenario corresponds to the nef-partition case, considered in 
\cite{Borisov.preprint}. The issue is whether $\deg$ 
(resp. $\deg^\dual$) lie in the semigroup generated by $\Delta$
(resp. $\Delta^\dual$).
The reader is referred for more 
details to the paper of Batyrev and Nill \cite{Bat.Nill}.
\end{remark}

Another important ingredient of \BB construction is 
a pair of coefficient functions, which are generic 
functions $f:\Delta\to \CC$ and $g:\Delta^\dual\to \CC$.
In the good cases they correspond to the coefficients 
of the defining equations of a pair of 
elements of mirror families. 
\BB duality corresponds to simply switching
all the data and their duals.

\medskip
It should now be clear to the reader that the above 
formulation of \BH construction of  Calabi-Yau type of
index $k$  \emph{almost}
fits into the framework of \BB construction of index $k$. Specifically,
we have dual lattices $M$ and $N$, 
elements $\deg\in M$ and $\deg^\dual\in N$,
finite sets $\Delta$ and $\Delta^\dual$, with
$\Delta\cdot \deg^\dual=1$, $\deg\cdot\Delta^\dual=1$,
$\deg\cdot\deg^\dual=k$,
as well as generic coefficient functions $f$, $g$. 
The only difference is that in the \BH case the cones 
$K_M=\sum_{m\in\Delta}\QQ_{\geq 0} m $
and $K_N = \sum_{n\in\Delta^\dual}\QQ_{\geq 0}n$
are not quite dual to each other but only satisfy
$K_M\cdot K_N\geq 0$.
In what follows, we will pursue this parallel to its
fullest by applying to the \BH setting 
the ideas and results of the vertex 
algebra approach, developed in the context of \BB mirror construction. 
Afterwards we will attempt to unify the two constructions
in Section \ref{sec.unification}.

\medskip
We will first pursue the analogy of \BH and \BB constructions
at the level of $A$ and $B$ rings, avoiding the vertex algebra machinery.
However, we will eventually invoke the full
machinery of the vertex algebra approach to prove the 
\BH duality.

\section{The double-graded cohomology spaces}\label{sec.V}

In this section, we reinterpret the $A$ and $B$ rings of \BH  
construction as the cohomology
of some double-graded complexes. These complexes are 
motivated by the calculation of $A$ and $B$ rings 
for \BB construction in \cite{BM}. 

\subsection{Calculation of \cite{BM} of the $A$ and $B$ rings of  Batyrev-Borisov construction.}
Let $M$ and $N$ be dual lattices, $K$ in $M$ and $K^\dual$ in $N$
a  pair of dual Gorenstein cones, $\deg\in M$, $\deg^\dual\in N$
the corresponding degree elements, and $\Delta\subset M$
and $\Delta^\dual\subset N$ the degree one elements of 
the appropriate cones. Let $f$ and $g$ be generic coefficient
functions. 

\medskip
Consider the vector space $\CC[(K\oplus K^\dual)_0]$
which has a basis indexed by the pairs of lattice points 
$(m,n)$ in $(K,K^\dual)$
with $m\cdot n = 0$. We will denote the corresponding 
monomials by $[m+n]$. This vector space has a natural 
structure of the module over the semigroup ring $\CC[K\oplus
K^\dual]$, since it is the quotient of $\CC[K\oplus K^\dual]$
by the ideal generated by $[m+n]$ with $m\cdot n>0$.
The following description of the $A$ and $B$ rings of \BB mirror
symmetry construction has been suggested in \cite{BM}
and \cite{chiralrings}.
\begin{definition}\label{BMchiral}
Consider the space 
$$\CC[(K\oplus K^\dual)_0]\otimes_\CC\Lambda^*(M_\CC)$$
with endomorphism $d^A_{f,g}$ given by 
$$
d^A_{f,g}:= \sum_{m\in\Delta} f(m)[m]\otimes (\wedge m)
+\sum_{n\in\Delta^\dual}g(n)[n]\otimes({\rm contr.} n)
$$
where ${\rm contr.}n$ means taking a contraction in
the exterior algebra of $M_\CC$ by element $n$.
The cohomology of $\CC[(K\oplus K^\dual)_0]\otimes_\CC\Lambda^*(M_\CC)$
with respect to $d^A_{f,g}$ is called the $A$  ring
associated to the data $(M,N,K,K^\dual,f,g)$.
Similarly, the cohomology of 
$$\CC[(K\oplus K^\dual)_0]\otimes_\CC\Lambda^*(N_\CC)$$
under 
$$
d^B_{f,g}:= \sum_{m\in\Delta} f(m)[m]\otimes ({\rm contr.} m)
+\sum_{n\in\Delta^\dual}g(n)[n]\otimes(\wedge n)
$$
is called the $B$  ring associated to these data.
\end{definition}

\begin{definition}\label{grading-A}
The $A$ ring possesses a natural
double grading as follows. There is the \emph{conformal}
grading induced from the grading on 
$\CC[(K\oplus K^\dual)_0]\otimes_\CC\Lambda^*(M_\CC)$
which assigns to $[m\oplus n] \otimes P$ the 
degree $-m\cdot \deg^\dual + \deg\cdot n + \deg (P)  -\deg\cdot 
\deg^\dual$. This grading is preserved by $d^A_{f,g}$. 
There is an additional \emph{cohomological}
grading induced by $m\cdot \deg^\dual + \deg\cdot n$ which
is increased by one by $d_{f,g}^A$.
\end{definition}

\begin{definition}\label{grading-B}
The $B$ ring possesses a natural
double grading as follows. The conformal
grading assigns to $[m\oplus n] \otimes P$ in 
$\CC[(K\oplus K^\dual)_0]\otimes_\CC\Lambda^*(N_\CC)$
the degree $m\cdot \deg^\dual - \deg\cdot n + \deg (P)  -\deg\cdot 
\deg^\dual$. This grading is preserved by $d^B_{f,g}$. 
The cohomological grading assigns to this element degree
$m\cdot \deg^\dual + \deg\cdot n-\deg\cdot \deg^\dual$. 
It is increased by one by $d_{f,g}^B$.
\end{definition}

\begin{remark}
The isomorphism between the $\Lambda^*(M_\CC)$
and $\Lambda^*(N_\CC)$ induces an isomorphism of 
the above complexes and thus an isomorphism of
$A$ and $B$ rings. This isomorphism clearly 
preserves the cohomological 
grading and replaces the conformal grading of $p$
by the conformal grading of $(\rk M-2\deg\cdot\deg^\dual)-p$.
The number $(\rk M-2\deg\cdot\deg^\dual)$ is the central
charge of the theory (and the dimension of the Calabi-Yau
variety if one can be associated to this set of data).
\end{remark}

The following key proposition has been proved in \cite{BM}.
\begin{proposition}
The dimensions of the double-graded components of 
the $A$-ring and the $B$-ring of the theory coincide with the 
stringy Hodge numbers of the Calabi-Yau variety,
if one can be associated to this set of data. In the particular
case of $\deg\cdot \deg^\dual = 1$, this Calabi-Yau variety
is ${\rm Proj}(\CC[K]/\langle\sum_{m\in\Delta}f(m)[m]\rangle)$.
\end{proposition}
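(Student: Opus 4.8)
The plan is to establish two separate claims: first, the identification of the Calabi-Yau variety in the index-one case, and second, the coincidence of double-graded dimensions with stringy Hodge numbers. I would treat the index-one case first, since it is the most concrete and provides the geometric anchor. When $\deg\cdot\deg^\dual=1$, the cone $K$ is a Gorenstein cone over a reflexive polytope, and the degree-one slice $\Delta$ consists of the lattice points of that polytope (together with the apex contributions). The hypersurface $\{\sum_{m\in\Delta}f(m)[m]=0\}$ in the projective toric variety $\mathrm{Proj}(\CC[K])$ is then precisely a Batyrev anticanonical hypersurface, which for generic $f$ is a Calabi-Yau variety. So the first task is to unwind the definitions and confirm that $\mathrm{Proj}(\CC[K]/\langle\sum_{m\in\Delta}f(m)[m]\rangle)$ is this anticanonical hypersurface.

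Next I would turn to the dimension count, which is the heart of the statement. The strategy here is to compute the dimensions of the double-graded pieces of the cohomology of $d^A_{f,g}$ (and $d^B_{f,g}$) directly and match them against the combinatorial formula for stringy Hodge numbers of Batyrev-Borisov complete intersections. Because $f$ and $g$ are generic, the differential $d^A_{f,g}$ should behave like a Koszul-type differential on the bigraded space $\CC[(K\oplus K^\dual)_0]\otimes\Lambda^*(M_\CC)$, and its cohomology can be analyzed by filtering according to the conformal grading of Definition \ref{grading-A}. The key computation is to show that, slice by slice in the conformal grading, the Euler characteristics and the individual cohomology dimensions reproduce the stringy $E$-function. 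I expect this to reduce to a careful bookkeeping of lattice points in faces of $K$ and $K^\dual$, weighted by the exterior-algebra contributions, organized so that the pairing condition $m\cdot n=0$ correctly cuts out the relevant strata.

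The main obstacle will be the generic-vanishing and exactness arguments needed to pass from the full bigraded space to its $d^A_{f,g}$-cohomology. Concretely, for generic $f,g$ one must show that the only surviving cohomology is concentrated where the combinatorial formula predicts it, which amounts to proving that certain Koszul complexes built from the wedge-and-contract operators are exact in all but the expected degree. This is exactly where genericity is essential and where the argument cannot be purely formal: one needs that $\{f(m)\}$ and $\{g(n)\}$ avoid the appropriate resultant loci so that the linear maps $\sum f(m)(\wedge m)$ and $\sum g(n)(\mathrm{contr.}\,n)$ have maximal rank on each graded piece. Since this Proposition is attributed to \cite{BM}, I would structure the write-up to isolate this exactness claim as the single technical input, invoke the results of \cite{BM} for it, and then assemble the graded dimension count and the index-one geometric identification as relatively direct consequences.
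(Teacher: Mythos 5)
Your proposal takes essentially the same route as the paper: the paper gives no independent argument for this proposition, stating only that it has been proved in \cite{BM}, and your plan likewise isolates the generic-exactness computation as the technical core and invokes \cite{BM} for exactly that input. The surrounding structure you sketch (identifying ${\rm Proj}(\CC[K]/\langle\sum_{m\in\Delta}f(m)[m]\rangle)$ with Batyrev's anticanonical hypersurface when $\deg\cdot\deg^\dual=1$, then matching the double-graded dimensions against the stringy $E$-function) is consistent with what that reference establishes.
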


It is not at all clear from the above description
how to construct a product on
the $A$ and $B$ rings above. In fact,
the only known general definition invokes the machinery
of vertex algebras in \cite{chiralrings}. 
However, there are some natural subrings
of $A$ and $B$ on which the product can be constructed
directly. 

\begin{remark}\label{diag}
Consider the subspace of 
$$\CC[(K\oplus K^\dual)_0]\otimes_\CC\Lambda^*(M_\CC)$$
spanned by the elements of the form 
$[{\bf 0}+n_1]\otimes 1$ where $n_1$ lies in the interior of $K^\dual$.
This subspace is annihilated by $d_{f,g}^A$, because 
$m\cdot n_1>0$ for all $m\in \Delta$. The image of this space in
the $A$ ring is isomorphic, as a $\CC$-algebra, to the quotient of 
$\CC[K^\dual]/\langle \sum_{n\in \Delta^\dual} g(n)
(m\cdot n )[n]\rangle $ by the annihilator of the element 
$[\deg^\dual]$.  Similarly, there is a natural subring
of the $B$ ring which comes from the linear combinations 
of the elements of the 
form $[m_1+{\bf 0}]\otimes 1$ with $m_1$ in the interior of $K$.
\end{remark}

\subsection{Preliminary lemma on Jacobian quotients}
In order to give a description of the $A$ and $B$ rings in the \BH
setting, we will first need a simple lemma which applies more
generally to any polynomial potential, not necessarily of \BH type.

\medskip
Let $q_i\in\QQ_{>0}$ denote
the weights of the variables $x_i$ in a polynomial ring of $d$
variables. Let $F(x_1,\ldots,x_d)$ be a total degree $1$ 
polynomial such that $F=0$ is smooth away from the origin.
Equivalently, the partial derivatives $\frac{\partial F}{\partial x_i}$
form a regular sequence. We are interested in the 
quotient 
$$\CC[x_1,\ldots,x_d]/\langle \frac{\partial F}{\partial x_1},
\ldots,\frac{\partial F}{\partial x_d}\rangle$$
which we will further shift by the product of the variables.
This additional shift is important for finite group actions that
need to be considered in the \BH construction.

\medskip
We denote by $\CC[{\bf x},{\bf y}]_0$ the quotient 
of the polynomial ring in $n$ variables by the monomials
$x_iy_i$. It has a basis of monomials whose $x$-support
is disjoint from $y$-support. Consider the space 
$$\CC[{\bf x},{\bf y}]_0\otimes \Lambda^*(\CC e_1+\ldots+ \CC e_d)$$
and the operator
$$
d^B_{f,{\bf 1}}:=\sum_{i=1}^d x_i\frac{\partial F}{\partial x_i}\otimes
{\rm contr.}e_i^\dual + \sum_{i=1}^d y_i \otimes\wedge e_i
$$
on it, where $e_i^\dual$ is the $i$-th element 
of the dual basis. 
\begin{lemma}\label{JacToLogJac}
Operator
$d^B_{f,{\bf 1}}$ is a differential. Its cohomology is naturally
isomorphic to 
$$
(\prod_{i=1}^d x_i)\CC[x_1,\ldots,x_d]/\langle 
\frac{\partial F}{\partial x_1},\ldots,\frac{\partial F}{\partial x_d}\rangle.
$$
\end{lemma}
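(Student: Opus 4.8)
The plan is to write $d^B_{f,{\bf 1}}=d'+d''$ with
$$d':=\sum_{i=1}^d x_i\frac{\partial F}{\partial x_i}\otimes {\rm contr.}e_i^\dual, \qquad d'':=\sum_{i=1}^d y_i\otimes(\wedge e_i),$$
acting on $C:=\CC[{\bf x},{\bf y}]_0\otimes\Lambda^*(\CC e_1+\cdots+\CC e_d)$, and first to check $(d')^2=(d'')^2=0$ and $d'd''+d''d'=0$. The multiplication operators by $x_i\frac{\partial F}{\partial x_i}$ (resp. by $y_i$) descend to $\CC[{\bf x},{\bf y}]_0$, pairwise commute, and act on a tensor factor different from the exterior operators; hence $(d')^2=0$ and $(d'')^2=0$ follow from the antisymmetry of contractions and of wedges. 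For the cross term I would use the canonical relation ${\rm contr.}e_i^\dual\circ(\wedge e_j)+(\wedge e_j)\circ{\rm contr.}e_i^\dual=\delta_{ij}$, which collapses $d'd''+d''d'$ to $\sum_i x_i\frac{\partial F}{\partial x_i}y_i\otimes 1$; this vanishes because $x_iy_i=0$ in $\CC[{\bf x},{\bf y}]_0$. Thus $D:=d'+d''$ is a differential.

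To compute the cohomology I would run the spectral sequence of the decreasing filtration by the integer $\nu:=\deg_y-\ell$, where $\deg_y$ is the $y$-degree and $\ell$ the exterior degree. Since $d''$ preserves $\nu$ while $d'$ raises it by one, the associated graded differential is $d_0=d''$. Now $\CC[{\bf x},{\bf y}]_0=\bigotimes_{i=1}^d \CC[x_i,y_i]/(x_iy_i)$ and $\Lambda^*(\CC e_1+\cdots+\CC e_d)=\bigotimes_i(\CC\oplus\CC e_i)$, and $d''=\sum_i d''_i$ is the corresponding tensor-product differential, so by K\"unneth $E_1=\bigotimes_i H^*(C_i,d''_i)$. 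A direct two-term computation of $C_i=(\CC[x_i,y_i]/(x_iy_i))\otimes(\CC\oplus\CC e_i)$, on which $d''_i$ is multiplication by $y_i$ followed by $\wedge e_i$, gives $H^0(C_i,d''_i)=x_i\CC[x_i]$ and $H^1(C_i,d''_i)=\CC[x_i]e_i$; in particular all $y$'s disappear. Consequently $E_1$ has the monomial basis $x^a\otimes e_S$ with $a_i\ge 1$ for $i\notin S$ and $a_i\ge 0$ for $i\in S$.

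Next I would compute the induced differential $d_1$, coming from $d'$. One checks that $d'(x^a\otimes e_S)=\sum_{i\in S}\pm\, x_i\frac{\partial F}{\partial x_i}\,x^a\otimes e_{S\setminus i}$ lands again in legal cocycle representatives: the factor $x_i$ forces divisibility by $x_i$ in the newly vacated slot, and no reduction is needed since no $y$'s occur. Writing $T:=\{1,\dots,d\}\setminus S$ and passing to complements by the linear isomorphism $\Phi(x^a\otimes e_S):=x^{a-\mathbf 1_T}\otimes e_T$ (legal because $a_i\ge1$ for $i\in T$, with $\mathbf 1_T$ the indicator exponent vector) onto $\CC[{\bf x}]\otimes\Lambda^*$, the differential $d_1$ becomes exactly the wedge-type Koszul differential $\sum_i\frac{\partial F}{\partial x_i}\otimes(\wedge e_i)$. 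Since $F=0$ is smooth away from the origin, $\frac{\partial F}{\partial x_1},\dots,\frac{\partial F}{\partial x_d}$ is a regular sequence, so this Koszul complex is exact except in top exterior degree, where the cohomology is the Jacobian ring $\CC[{\bf x}]/\langle\frac{\partial F}{\partial x_1},\dots,\frac{\partial F}{\partial x_d}\rangle$. Translating back through $\Phi^{-1}$, the surviving top class $T=\{1,\dots,d\}$ corresponds to $S=\emptyset$ and to the representatives $(\prod_i x_i)x^c\otimes 1$, so $E_2=(\prod_i x_i)\CC[{\bf x}]/\langle\frac{\partial F}{\partial x_1},\dots,\frac{\partial F}{\partial x_d}\rangle$, concentrated in exterior degree $0$ with $\deg_y=0$, i.e. at the single filtration level $\nu=0$.

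Finally, because $E_2$ is supported at one value of $\nu$, all higher differentials vanish and $E_2=E_\infty$. To legitimize convergence I would equip $C$ with the grading $\deg x_i=q_i$, $\deg y_i=1$, $\deg e_i=0$, under which $D$ is homogeneous of degree one (using that $x_i\frac{\partial F}{\partial x_i}$ is weighted-homogeneous of degree one) and, since all $q_i>0$, every graded component is finite dimensional and carries a finite $\nu$-filtration. Hence $H^*(C,D)\cong(\prod_i x_i)\CC[{\bf x}]/\langle\frac{\partial F}{\partial x_1},\dots,\frac{\partial F}{\partial x_d}\rangle$, the isomorphism being the one induced by the evident cocycles $x^a\otimes 1$ with all $a_i\ge1$ (indeed $d''(x^a\otimes1)=0$ precisely when every $a_i\ge1$). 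I expect the main obstacle to be the middle step: identifying $d_1$ honestly with the Koszul differential of the regular sequence $\frac{\partial F}{\partial x_i}$, including the bookkeeping of which monomials remain valid $d''$-cocycle representatives and the complementation shift $\Phi$. Once that identification is in place, regularity of the sequence $\frac{\partial F}{\partial x_i}$ does all the remaining work.
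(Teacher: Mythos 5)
Your proof is correct, and at bottom it is the same computation as the paper's, repackaged as a single spectral sequence. The paper does not filter the whole complex: it introduces the subcomplex $\mathcal{C}$ spanned by the elements $P({\bf x})\prod_{i\in I}x_i\otimes(\wedge_{i\notin I}e_i)$ --- which is precisely the span of your $E_1$-representatives $x^a\otimes e_S$ with $a_i\geq 1$ for $i\notin S$ --- observes that the wedge terms kill it while the contraction terms act on the ``$P$ parts'' as the Koszul complex of the regular sequence $\frac{\partial F}{\partial x_i}$ (your shift $\Phi$ is exactly this bookkeeping), and then shows separately that the quotient complex is acyclic by taking cohomology with respect to the wedge part $d_2=d''$ first, one ${\bf x}$-multidegree at a time, via augmented Koszul complexes in the variables $y_i$. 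Your route instead computes the $d''$-cohomology of the whole complex at once by K\"unneth and then reads off the induced differential $d_1$. The trade-off is minor but real: the paper's organization sidesteps the one delicate point you flag yourself --- verifying that $d_1$ on the $E_1$ page is honestly the Koszul differential, i.e.\ that $d'$ of a legal representative is again a legal representative --- since on an honest subcomplex no such verification arises; your organization avoids having to check that $\mathcal{C}$ is a subcomplex and to run the acyclicity of the quotient as a separate argument. Both proofs rest on the same two ingredients (acyclicity of the Koszul complex in the $y_i$, and regularity of the sequence $\frac{\partial F}{\partial x_i}$) and use the identical convergence device, namely that the grading with $\deg x_i = q_i$, $\deg y_i = 1$ has finite-dimensional graded pieces.
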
 

\begin{proof}
It is easy to see that the terms in $d^B_{f,{\bf 1}}$ anticommute
with each other. The only interesting case is the anticommutator
of the $i$-th term from the first sum and the $i$-th term from the 
second sum, which equals $x_iy_i\frac{\partial F}{\partial x_i}\otimes 1$, 
which is zero because we work in $\CC[{\bf x},{\bf y}]_0$.
We can also observe that $d^B_{f,{\bf 1}}$ increases by one
the total degree in ${\bf x}$ and ${\bf y}$ where $x_i$ has degree $q_i$
and $y_i$ has degree $1$.

\medskip
Consider the subcomplex $\mathcal C$ of 
$\CC[{\bf x},{\bf y}]_0\otimes \Lambda^*(\CC e_1+\ldots+ \CC e_d)$
which is spanned by elements of the form
\begin{equation}\label{P}
P({\bf x})\prod_{i\in I} x_i \otimes (\wedge_{i\not\in I}e_i). 
\end{equation}
where $I$ ranges over all subsets of $\{1,\ldots,d\}$ and
$P$ is any polynomial in $\CC[x_1,\ldots,x_d]$. The wedge terms 
of  $d^B_{f,{\bf 1}}$ act trivially on it, in view of $x_iy_i=0$. 
The contraction terms act exactly like the Koszul complex
for $\frac{\partial F}{\partial x_i}$ on $\CC[{\bf x}]$ on the $P$ 
parts of \eqref{P}. As a result, the cohomology of $\mathcal C$
is precisely
$$
(\prod_{i=1}^d x_i)\CC[x_1,\ldots,x_d]/\langle 
\frac{\partial F}{\partial x_1},\ldots,\frac{\partial F}{\partial x_d}\rangle.
$$
It remains to show that 
\begin{equation}\label{q}
\Big(\CC[{\bf x},{\bf y}]_0\otimes \Lambda^*(\CC e_1+\ldots+ \CC e_d)\Big)/{\mathcal C}
\end{equation}
has zero cohomology. Consider the double grading given by the degree in ${\bf x}$ 
and the degree in ${\bf y}$ where each $y_i$ is given degree $1$.
Note that the first and second sums $d_1$ and $d_2$ in 
$d_{f,{\bf 1}}^B$ change this bidegree by $(1,0)$ and $(0,1)$
respectively.
The stupid filtrations converge, because the dimensions of the 
graded pieces are finite. Thus it suffices to check that cohomology
of \eqref{q} with respect to $d_2$ vanishes.

\medskip
The cohomology of \eqref{q} with respect to $d_2$ 
can be computed separately 
for each multidegree in ${\bf x}$.  Consider $\prod_{i\in I}x_i^{r_i}$
with positive $r_i$. The corresponding part of 
\eqref{q} is 
$$\Big(\CC[y_i, i\not\in I]\otimes  \Lambda^*(\oplus_{i=1}^d\CC e_i)
\Big)/\Big(\Lambda_{j\not\in I}e_j\wedge \Lambda^*(\oplus_{i\in I}\CC e_i)\Big).
$$ 
This complex is quasi-isomorphic to the  tensor product of 
$ \Lambda^*(\oplus_{i\in I}\CC e_i)$
with trivial differential and the augmented Koszul complex
for $\CC[y_i,i\not\in I]$ and the sequence $(y_i,i\not\in I)$.
The latter is acyclic, which finishes the proof.
\end{proof}

We will now consider abelian group actions in the context of Lemma \ref{JacToLogJac}.
Let $G$ be a finite abelian group which 
acts on ${\bf x}$ diagonally and fixes the potential $F$.
Consider two dual lattices
$\ZZ^d$ with the standard cones $(\ZZ_{\geq 0})^d$
such that the corresponding semigroup algebras 
are identified with $\CC[x_1,\ldots, x_d]$ and $\CC[y_1,\ldots,y_d]$
respectively. Let $M$ be the sublattice that consists of degrees of
monomials in ${\bf x}$ that are fixed by $G$. Denote by $C$
the cone in $M$ which is the intersection of the standard cone
with it.  Denote by $\CC[(C\oplus (\ZZ_{\geq 0})^d)_0]$ 
the quotient of $\CC[C\oplus (\ZZ_{\geq 0})^d]$ by the 
monomials with positive pairing.
\begin{proposition}\label{fixed}
The $G$-fixed part of the (shifted) Milnor ring of $F$ is given
by the cohomology of 
$$
\CC[(C\oplus (\ZZ_{\geq 0})^d)_0]\otimes \Lambda^*(\oplus_{i=1}^d \CC e_i)
$$
with respect to
$$
d^B_{f,{\bf 1}}:=\sum_{i=1}^d x_i\frac{\partial F}{\partial x_i}\otimes
{\rm contr.}e_i^\dual + \sum_{i=1}^d y_i \otimes\wedge e_i.
$$
\end{proposition}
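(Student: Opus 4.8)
The plan is to realize the statement as the $G$-invariant part of Lemma~\ref{JacToLogJac}, using that over $\CC$ the functor of $G$-invariants of a finite abelian group is exact and hence commutes with the passage to cohomology. First I would let $G$ act on the complex
$$
\CC[{\bf x},{\bf y}]_0\otimes \Lambda^*(\CC e_1+\ldots+\CC e_d)
$$
of Lemma~\ref{JacToLogJac} by acting on the $x_i$ through the given diagonal characters $x_i\mapsto\gamma_i x_i$ and trivially on the $y_i$ and on the $e_i$, and check that $d^B_{f,{\bf 1}}$ is $G$-equivariant. This is essentially automatic: since $F$ is $G$-invariant, each $x_i\frac{\partial F}{\partial x_i}$ carries the trivial character, so the first sum commutes with $G$, while the second sum does so once the $y_i$ and $e_i$ are given the trivial character. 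A quick weight count shows that these are in fact the only characters for which every summand commutes with the action, so the action is canonical rather than an arbitrary choice.

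Next I would identify the invariant subcomplex with the complex in the proposition. A basis monomial $x^ay^b$ of $\CC[{\bf x},{\bf y}]_0$ has $a$ and $b$ of disjoint support --- equivalently the pairing of $a$ with $b$ vanishes --- and is $G$-fixed exactly when $x^a$ is $G$-invariant, i.e. when $a$ lies in the sublattice $M$ of $G$-invariant ${\bf x}$-degrees, hence in $C=M\cap(\ZZ_{\geq 0})^d$, with $b$ arbitrary in $(\ZZ_{\geq 0})^d$. These monomials are precisely a basis of $\CC[(C\oplus(\ZZ_{\geq 0})^d)_0]$, and $\Lambda^*(\oplus_i\CC e_i)$ is fixed pointwise; so the $G$-invariant subcomplex, equipped with the induced differential $d^B_{f,{\bf 1}}$, is exactly
$$
\CC[(C\oplus(\ZZ_{\geq 0})^d)_0]\otimes \Lambda^*(\oplus_{i=1}^d\CC e_i).
$$

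Finally, because taking $G$-invariants is exact it commutes with cohomology, so the cohomology of this invariant subcomplex is the $G$-invariant part of the cohomology of the full complex, which by Lemma~\ref{JacToLogJac} is the shifted Milnor ring $(\prod_i x_i)\CC[x_1,\ldots,x_d]/\langle\frac{\partial F}{\partial x_1},\ldots,\frac{\partial F}{\partial x_d}\rangle$. To land exactly on the $G$-fixed part of this ring I must know that the natural isomorphism of Lemma~\ref{JacToLogJac} is itself $G$-equivariant, and this is where I would spend the most care. Its cohomology classes are represented inside the subcomplex $\mathcal{C}$ by elements $P({\bf x})\prod_i x_i\otimes 1$, on which $G$ acts only through the ${\bf x}$-variables, matching the action on the shifted Milnor ring verbatim; moreover the reduction to $\mathcal{C}$ and the Koszul acyclicity argument there can be run $G$-equivariantly, since $\mathcal{C}$ and the complementary acyclic complex are both $G$-stable. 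Granting this compatibility --- the one genuine point to verify --- the $G$-fixed part of the shifted Milnor ring is identified with the stated cohomology, completing the proof.
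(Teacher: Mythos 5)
Your proposal is correct and follows essentially the same route as the paper: the paper likewise puts the trivial $G$-action on the $y_i$ and $e_i$, notes that the logarithmic derivatives $x_i\frac{\partial F}{\partial x_i}$ are $G$-invariant, identifies $\CC[(C\oplus(\ZZ_{\geq 0})^d)_0]$ as the $G$-invariant part of $\CC[{\bf x},{\bf y}]_0$, and concludes by passing to invariants in Lemma \ref{JacToLogJac}. You simply spell out what the paper leaves implicit --- the exactness of taking invariants of a finite group over $\CC$ and the $G$-equivariance of the isomorphism of Lemma \ref{JacToLogJac} via the subcomplex $\mathcal{C}$ --- which is a worthwhile check but not a different argument.
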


\begin{proof}
We first remark that the logarithmic partial derivatives 
$x_i\frac{\partial F}{\partial x_i}$ make sense as elements 
of $\CC[C]$. The action of $G$
on the shifted Milnor ring is induced from the diagonal action
on $x_i$. We consider the trivial action of $G$
on $y_i$ and $e_i$ to define the action on the complex of 
Lemma \ref{JacToLogJac}, which induces the same action on
the cohomology. It remains to observe that 
$\CC[(C\oplus (\ZZ_{\geq 0})^d)_0]$
is the $G$-invariant part of 
$\CC[{\bf x},{\bf y}]_0$.
\end{proof}

\subsection{Calculation of the $A$ and $B$ rings of Berglund-H\"ubsch construction}
We use the notations $W$, $G$, $M$, $N$, $K_M$, $K_N$,
$\deg$, $\deg^\dual$, $\Delta$, $\Delta^\dual$ from
Subsection  \ref{ssec.combref}. In this section we are only
concerned with the double-graded vector spaces, so the
ring structure is ignored.

\medskip
The following definition of the $B$  ring of \BH construction
follows from the definitions of \cite{Krawitz}.
\begin{definition}
The $B$  ring is given by 
$$
\Big(\oplus_{g\in G} \La_g \Big)^G
$$ 
where $\La_g^G$ is the $G$-invariant part of the Milnor ring
in the variables 
$x_i$ that are fixed by $g$, with respect to the restriction
of the potential, shifted by the product of variables fixed under $g$.
\end{definition}

We define $\CC[(K_N^\dual\oplus K_N)_0]$ to be 
the space with the basis indexed by $[m+n]$
where $m$ and $n$ are lattice points in $K_N^\dual$ and 
$K_N$ respectively. It is naturally a module over 
 $\CC[K_N^\dual\oplus K_N]$.
\begin{proposition}\label{B-iso}
There is a natural isomorphism between the $B$  ring
of \BH construction, with components twisted by certain
one-dimensional spaces, and the cohomology of 
$$
\CC[(K_N^\dual\oplus K_N)_0]\otimes \Lambda^*(N_\CC)
$$
with respect to 
$$
d^B:= \sum_{m\in\Delta} [m]\otimes ({\rm contr.}m)
+\sum_{n\in \Delta^\dual} [n]\otimes (\wedge n)
$$
$$
=\sum_{m\in\Delta} [m]\otimes ({\rm contr.}m)
+\sum_{j=1}^d[v_j]\otimes (\wedge v_j)
$$
where we use the combinatorial reformulation of 
Section \ref{sec.BH-BB-comb}.
\end{proposition}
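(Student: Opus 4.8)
The plan is to deduce Proposition~\ref{B-iso} from Proposition~\ref{fixed} by decomposing the complex according to the group $G=N/N_0$, so that each sector recovers one twisted Milnor ring $\La_g$. First I would note that $d^B$ is homogeneous of degree zero for the grading of $\CC[(K_N^\dual\oplus K_N)_0]\otimes\Lambda^*(N_\CC)$ by the class $\bar n\in N/N_0$ of the $K_N$-component: the contraction terms $[u_i]\otimes({\rm contr.}\,u_i)$ change only the $K_N^\dual$-component, while the wedge terms $[v_j]\otimes(\wedge v_j)$ shift $n$ by $v_j\in N_0$. Hence the complex splits as a direct sum over $g\in G$, and it suffices to identify the cohomology of the $g$-summand with $\La_g^G$ up to a one-dimensional twist.

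Fixing a sector $g$, I would use that the $v_j$ form a $\QQ$-basis of $N_\QQ$ to write each lattice point of $K_N$ in the coset $g$ uniquely as $n=n_g+\sum_j k_j v_j$ with $k_j\in\ZZ_{\ge 0}$, where $n_g=\sum_j s_j v_j$ is the minimal representative with $0\le s_j<1$. The fixed index set $I_g=\{j:s_j=0\}$ is precisely the set of variables $x_j$ left fixed by $g$. Since $m\cdot v_j\ge 0$ and $s_j>0$ for $j\notin I_g$, the relation $m\cdot n=0$ forces $m\cdot v_j=0$ for every moving index $j\notin I_g$, confining the $K_N^\dual$-component to the face of $K_N^\dual$ dual to the moving directions. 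Matching the exterior generators $e_j$ of Proposition~\ref{fixed} with $v_j$, hence $e_j^\dual$ with $v_j^*$, and using $u_i=\sum_j a_{ij}v_j^*$, one gets the identity $\sum_i[u_i]\otimes({\rm contr.}\,u_i)=\sum_j\big(x_j\frac{\partial W}{\partial x_j}\big)\otimes({\rm contr.}\,v_j^*)$. I would then check that, after the shift by $n_g$, the terms involving the fixed directions reproduce the differential $d^B_{f,{\bf 1}}$ of Proposition~\ref{fixed} for the restricted potential $W_g$ in the variables $\{x_j:j\in I_g\}$, with $C=K_N^\dual\cap M$ replaced by its fixed-direction face.

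Next I would separate fixed from moving directions. A surviving monomial $u_i$ of $W_g$ has $a_{ij}=0$ for all $j\notin I_g$, so the fixed-direction contractions and wedges commute past the moving-direction wedges and the $g$-summand factors as a tensor product of the Proposition~\ref{fixed} complex for $W_g$ with a complex built from the moving directions alone. In a moving direction $j\notin I_g$ one has $m\cdot v_j=0$ identically, so the only operator is $[v_j]\otimes(\wedge v_j)$; the corresponding factor $\CC[k_j]\otimes\Lambda^*(\CC v_j)$ with differential $k_j\wedge v_j$ is a nonaugmented Koszul complex whose cohomology is one-dimensional, spanned by the class with $k_j=0$ and $v_j$ present, by the same Koszul computation underlying Lemma~\ref{JacToLogJac}. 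Taking the product over moving directions, the moving factor contributes the line $\Lambda^{\rm top}\big(\bigoplus_{j\notin I_g}\CC v_j\big)$. Thus the $g$-summand computes $\La_g^G$ twisted by this determinant line, which I expect to be the one-dimensional twist in the $B$ ring of \cite{Krawitz}; summing over $g\in G$ gives the stated isomorphism, and the double grading follows from the $\deg$, $\deg^\dual$ assignments as in Definition~\ref{grading-B}.

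The step I expect to be the main obstacle is the hypothesis needed to invoke Proposition~\ref{fixed} in every sector: it requires each restricted potential $W_g$ to be nondegenerate, so that the fixed-direction Koszul differential is exact off the Milnor ring and $\La_g$ is the genuine finite-dimensional quotient. The nondegeneracy of restrictions to fixed loci is a structural fact about invertible potentials and is not a formal consequence of the nondegeneracy of $W$ alone; this is the single place where the special geometry of $W$ enters, and I would import it from the setup underlying the definition of the $\La_g$ in \cite{Krawitz}. The remaining work is bookkeeping: pinning down the determinant-line twists and checking that the conformal and cohomological gradings of each $g$-summand match those prescribed in Definition~\ref{grading-B} and \cite{Krawitz}.
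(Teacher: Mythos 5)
Your proposal is correct and follows essentially the same route as the paper: split the complex over $G=N/N_0$, write each sector's $K_N$-component as $n_g+\sum_j k_j v_j$, use $m\cdot n=0$ to confine $m$ to the fixed-variable face, and factor the sector as (Koszul complex in the moving directions, whose cohomology is the one-dimensional twist $\CC\,\Lambda_{j\notin I_g}v_j$) tensor (the complex of Proposition \ref{fixed} for the restricted potential, with $[v_j]$ playing the role of $y_j$). The nondegeneracy of the restrictions $W_g$ that you flag is indeed needed and is left implicit in the paper, being built into the very definition of $\La_g$ in the \BH setting.
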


\begin{proof}
Denote by $N_0$ the lattice generated by $v_j,j=1,\ldots,n$,
see Proposition \ref{BHtoric}. 
The complex under consideration splits into a direct sum of 
complexes that correspond to elements $N/N_0$, i.e. to elements
of $G$. 
 
\medskip
Consider one element  $g\in G$ and the corresponding 
rational linear combination $n_g=\sum_{j=1}^d h_j v_j$
with $0\leq h_j<1$. The corresponding action on $x_j$
is $x_j\mapsto \exp(2\pi\ii h_j)x_j$. 
The corresponding part of the complex will have $n$
of the form $n_g+\sum_{j=1}^d \ZZ_{\geq 0}v_j$.
The property $m\cdot n=0$ implies that only monomials
in the variables $x_j$ that are fixed under $g$ can appear
in the $g$-part of 
$\CC[(K_N^\dual\oplus K_N)_0]\otimes \Lambda^*(N_\CC)$.

\medskip
The addition to $n$ of $v_j$ for which $h_j>0$ thus does not 
affect the $m$ that can occur. The $g$-part of 
$\CC[(K_N^\dual\oplus K_N)_0]\otimes \Lambda^*(N_\CC)$
is then seen to be the tensor product of two complexes.
The first complex is given by 
$$
\CC[\sum_{j,h_j>0} \ZZ_{\geq 0} v_j]\otimes \Lambda^*
(\oplus_{j, h_j>0}\CC e_j)
$$
with the differential $\sum_{j,h_j>0} [v_j]\otimes (\wedge v_j)$.
The second complex is precisely the complex of Proposition 
\ref{fixed} for the group $G$ and the variables $x_j$ that are fixed
by $g$, with $[v_j]$ serving as $y_j$. These variables 
$x_j$ correspond to the elements of the dual basis to $(v_j)$.

\medskip
The cohomology of the first complex is one-dimensional
and is represented by $\CC\Lambda_{j,h_j>0} v_j$.
The cohomology of the second complex is given by 
Proposition \ref{fixed} as the $G$-equivariant part of $\La_g$.
This gives the natural isomorphism claimed in the statement of the proposition.
\end{proof}

\begin{remark}
The arguments of Lemma \ref{JacToLogJac} and Proposition
\ref{B-iso} imply that the $g$-part of the $B$ ring in our combinatorial
description comes from elements of the form
\begin{equation}\label{comesfrom}
[m\oplus n_g]\otimes\Lambda_{j, h_j>0} v_j
\end{equation}
where $n_g$ is the element that corresponds to $g$ and 
$m$ ranges over lattice elements in the interior of the face of
$K_N^\dual$ with $m\cdot n_g =0$.
\end{remark}

We will now show that the above isomorphism preserves
the structures of the double graded supervector spaces. We will first
describe the double grading and parity in our combinatorial 
formulation. We follow Definition \ref{grading-B}
and define the conformal and cohomological grading of 
$(m\oplus n)\otimes P$ in $\CC[(K_N^\dual\oplus K_N)_0]\otimes \Lambda^*(N_\CC)$ by
$m\cdot \deg^\dual - \deg\cdot n + \deg (P)  -\deg\cdot 
\deg^\dual$ and
$m\cdot \deg^\dual + \deg\cdot n-\deg\cdot\deg^\dual$ respectively. We also define
parity as follows.

\begin{proposition}
The superspace structure on the cohomology 
of $\CC[(K_N^\dual\oplus K_N)_0]\otimes \Lambda^*(N_\CC)$
given by the sum of the cohomological and conformal grading
modulo two is coming from the parity of the degree in 
$\Lambda^*(N_\CC)$.
\end{proposition}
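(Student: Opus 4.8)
The plan is to reduce the claim to a one-line computation of the sum of the two gradings and then to verify that this sum is well defined on cohomology. First I would recall the explicit formulas given just above the statement: on a basis element $[m\oplus n]\otimes P$ of $\CC[(K_N^\dual\oplus K_N)_0]\otimes\Lambda^*(N_\CC)$ the conformal grading equals $m\cdot\deg^\dual-\deg\cdot n+\deg(P)-\deg\cdot\deg^\dual$ while the cohomological grading equals $m\cdot\deg^\dual+\deg\cdot n-\deg\cdot\deg^\dual$. Adding the two, the contributions $\mp\deg\cdot n$ cancel and one obtains $2\,m\cdot\deg^\dual+\deg(P)-2\,\deg\cdot\deg^\dual$.

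Second, I would observe that the two doubled terms are even integers. The point to check is integrality of the pairings: the lattice point $m$ lies in $K_N^\dual\subseteq M$ and $\deg^\dual\in N$ (the latter because in the Calabi-Yau type setting the group $G$ contains the exponential grading operator), so $m\cdot\deg^\dual\in\ZZ$ by duality of $M$ and $N$; likewise $\deg\cdot\deg^\dual=k\in\ZZ$. Hence the sum of the two gradings is congruent to $\deg(P)$ modulo two, which is exactly the parity of the exterior degree in $\Lambda^*(N_\CC)$.

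Finally, I would argue that this identification descends to cohomology. Since $d^B$ preserves the conformal grading and raises the cohomological grading by one, it is bihomogeneous, so every cohomology class carries a well-defined pair of gradings; the computation above then shows that on each fixed bidegree the parity of $\deg(P)$ is constant and equal to the sum of the two gradings modulo two. Consequently the $\ZZ/2$-decomposition defined by the sum of conformal and cohomological gradings modulo two coincides with the decomposition by parity of the $\Lambda^*(N_\CC)$-degree, both at the level of the complex and of its cohomology. The only delicate point is the integrality of the pairings above, and this is immediate from $\deg\in M$, $\deg^\dual\in N$ and the fact that $M$ and $N$ are dual lattices; I expect no genuine obstacle to arise.
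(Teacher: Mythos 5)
Your proof is correct and is exactly the computation the paper has in mind: the paper's own proof is just ``Clear,'' meaning the observation that the $\deg\cdot n$ terms cancel in the sum of the two gradings, leaving $2\,m\cdot\deg^\dual+\deg(P)-2\,\deg\cdot\deg^\dual\equiv\deg(P)\pmod 2$ by integrality of the pairings. Your additional remarks on integrality (from $\deg\in M$, $\deg^\dual\in N$ in the Calabi--Yau type setting) and on descent to cohomology are the right points to make explicit, and nothing further is needed.
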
 

\begin{proof}
Clear.
\end{proof}

\begin{proposition}
The double grading and parity on the $B$ ring given by 
conformal and cohomological gradings coincides with 
the grading on the $B$ ring given in \cite{Krawitz}.
\end{proposition}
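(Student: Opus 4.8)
The plan is to match the two double gradings explicitly on the generators that represent cohomology classes, which by the preceding remark are of the form $[m\oplus n_g]\otimes\Lambda_{j,\,h_j>0}v_j$, where $n_g=\sum_j h_jv_j$ with $0\le h_j<1$ corresponds to $g\in G$ and $m$ lies in the interior of the face of $K_N^\dual$ annihilated by $n_g$. First I would recall the Krawitz/Kaufmann bidegree of such a class: in the $g$-twisted sector the bidegree is computed from the age (or degree shift) of $g$ together with the homogeneous degree of the representative monomial in the shifted Milnor ring $\La_g$. The goal is to show that our conformal grading $m\cdot\deg^\dual-\deg\cdot n+\deg(P)-\deg\cdot\deg^\dual$ and cohomological grading $m\cdot\deg^\dual+\deg\cdot n-\deg\cdot\deg^\dual$, when restricted to these representatives, reproduce exactly Krawitz's $(p,q)$-type bigrading.

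The key computational step is to unwind the three contributions to each grading on the representative \eqref{comesfrom}. The exterior part contributes $\deg(P)=|\{j:h_j>0\}|$, the number of variables moved by $g$; this is precisely what enters the fixed-locus dimension shift in Krawitz's formula. The pairing $\deg\cdot n_g=\sum_j h_j$ records the total rotation, which is (up to the normalization by $k=\deg\cdot\deg^\dual$) the age of $g$ acting on the $x_j$, while $m\cdot\deg^\dual$ records the homogeneous degree of the Milnor-ring monomial, since under the identification of Proposition \ref{B-iso} the variable $x_j$ has weight $q_j$ and $\deg^\dual=\sum_j q_jv_j$. I would therefore rewrite Krawitz's bidegree for a monomial in $\La_g$ in terms of exactly these three quantities, using the Calabi-Yau normalization $\sum_j q_j=k$ to absorb the $-\deg\cdot\deg^\dual$ shifts. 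Combining the $g$-invariance that cuts out the $G$-fixed part on both sides (already matched in Proposition \ref{B-iso}) completes the identification of the underlying bigraded vector spaces with their gradings intact.

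The main obstacle I anticipate is bookkeeping of normalization conventions rather than any genuine difficulty: Krawitz's grading is typically written using the age function and a shift by $\sum_j q_j$ per twisted sector, whereas our grading is phrased through lattice pairings, and the two differ by an affine reindexing of the $g$-sectors that must be pinned down carefully. In particular I would check that the representative $n_g$ with $0\le h_j<1$ corresponds to the same sector Krawitz assigns, and that the parity (sum of the two gradings mod two), shown in the previous proposition to equal the $\Lambda^*(N_\CC)$-degree parity, agrees with the sign conventions in \cite{Krawitz}. Once the per-sector affine correspondence is fixed, the equality of gradings is forced on generators and hence on all of cohomology, so the proof reduces to a direct comparison of two explicit linear expressions in $m\cdot\deg^\dual$, $\deg\cdot n_g$, and $\deg(P)$.
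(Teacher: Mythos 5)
Your proposal is correct and follows essentially the same route as the paper's proof: evaluate both gradings on the representatives $[m\oplus n_g]\otimes\Lambda_{j,\,h_j>0}v_j$ and match Krawitz's formula $(Q^B_+,Q^B_-)=\bigl(\sum_{h_j\neq 0}(h_j-q_j),\sum_{h_j\neq 0}(1-h_j-q_j)\bigr)+(p,p)$ against the lattice expressions, using $\deg\cdot n_g=\sum_{h_j\neq 0}h_j$, $\deg(P)=\sharp\{j:h_j>0\}$ and $\sum_j q_j=\deg\cdot\deg^\dual$. The one bookkeeping point you should make explicit (it is exactly the normalization issue you flag): the Milnor-ring degree of the representative is $p=m\cdot\deg^\dual-\sum_{h_j=0}q_j$ rather than $m\cdot\deg^\dual$, owing to the shift by the product of the variables fixed by $g$; with that correction the two linear expressions agree term by term.
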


\begin{proof}
The double grading on the $B$ ring was first given by Kaufmann, see
\cite{Kau1}-\cite{Kau3}. We will be using its description in \cite{Krawitz}.
Namely, the bi-grading on the $g$-component of the 
$B$ ring is 
$$
(Q^B_+,Q^B_-)=\Big(\sum_{h_j\neq 0} (h_j-q_j),\sum_{h_j\neq 0} (1-h_j-q_j)
\Big)+\Big(p,p\Big) $$
where $p$ is the degree of the polynomial in the Milnor ring.
Here $n_g=\sum_{j=1}^d h_jv_j$ as in the proof of Proposition \ref{B-iso}.

\medskip
The element in \eqref{comesfrom} corresponds to the element 
of the Milnor ring of degree equal to $m\cdot \deg^\dual - \sum_{h_j=0} q_j$,
because we need to adjust for the product of the variables.
As a result, its bi-grading according to \cite{Krawitz} is 
given by 
$$
\Big(\sum_{h_j\neq 0} (h_j-q_j) +
(m\cdot \deg^\dual - \sum_{h_j=0} q_j),
\sum_{h_j\neq 0} (1-h_j-q_j)
+(m\cdot \deg^\dual - \sum_{h_j=0} q_j)
\Big)
$$
$$
=
\Big(\sum_{h_j\neq 0} h_j +
m\cdot \deg^\dual - \sum_{j=1}^d q_j,
-\sum_{h_j\neq 0}h_j + \sum_{h_j\neq 0}1
+ m\cdot \deg^\dual - \sum_{j=1}^d q_j
\Big)
$$$$
=
\Big(\deg\cdot n_g +
m\cdot \deg^\dual - \deg\cdot\deg^\dual,
-\deg\cdot n_g + \sum_{h_j\neq 0}1
+ m\cdot \deg^\dual - \deg\cdot\deg^\dual
\Big)
$$
which equals the cohomological and conformal grading
respectively.
\end{proof}

We can make similar statements about the double grading of the $A$ ring,
based on the comparison of the $A$ and $B$ graded spaces.

\begin{proposition}\label{A-iso}
The $A$ ring of \BH construction, with components twisted
by certain one-dimensional spaces, is naturally isomorphic 
to the cohomology of 
$$
\CC[(K_N^\dual\oplus K_N)_0]\otimes \Lambda^*(M_\CC)
$$
with respect to 
$$
d^A:= \sum_{m\in\Delta} [m]\otimes (\wedge m)
+\sum_{n\in \Delta^\dual} [n]\otimes ({\rm contr.} n).
$$
The bi-grading by $Q^A_+$ and $Q^A_-$ of \cite{Krawitz} comes
from the cohomological and conformal grading on the 
complex respectively. These gradings  assign 
to $[m\oplus n]\otimes P$ the degrees
$m\cdot \deg^\dual + \deg\cdot n-\deg\cdot\deg^\dual$
and $m\cdot \deg^\dual - \deg\cdot n + \deg (P)  -\deg\cdot 
\deg^\dual$ 
respectively.
\end{proposition}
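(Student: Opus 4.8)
The plan is to run the proof of Proposition \ref{B-iso} and of the grading proposition preceding it with the two lattices $M$ and $N$, and the operations of exterior multiplication and contraction, interchanged, while leaving the factor $\CC[(K_N^\dual\oplus K_N)_0]$ untouched. The elementary input is that for dual lattices of rank $d$ the perfect pairing gives a canonical isomorphism $\Lambda^p(M_\CC)\cong\Lambda^{d-p}(N_\CC)\otimes\Lambda^d(M_\CC)$ under which $\wedge m$ for $m\in M$ becomes contraction by $m$ (regarded as a functional on $N$) and contraction by $n\in N$ becomes $\wedge n$. Tensoring with the untouched factor, this carries $d^A$ to the operator $d^B$ of Proposition \ref{B-iso} up to the one-dimensional twist by $\Lambda^d(M_\CC)$ and the reflection $p\mapsto d-p$ of exterior degree. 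Conceptually this is the combinatorial shadow of the spectral flow relating the $A$ and $B$ rings of one theory, recalled in Section \ref{sec.intro}.

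There are two routes to the vector-space statement. The first is to transport Proposition \ref{B-iso} through the displayed duality, obtaining at once that the cohomology of the $A$-complex is, up to the twist, the cohomology of the $B$-complex. The second, which I find more transparent, is to recompute the cohomology of the $A$-complex from scratch exactly as in Proposition \ref{B-iso}: split it into the sectors indexed by $N/N_0$, and in each sector apply the analogues of Lemma \ref{JacToLogJac} and Proposition \ref{fixed} in which the contraction terms $\sum_{n\in\Delta^\dual}[n]\otimes({\rm contr.}n)$ now serve as the Koszul differential on $\Lambda^*(M_\CC)$ while the wedge terms $\sum_{m\in\Delta}[m]\otimes(\wedge m)$ play the role of the free variables. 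Each sector cohomology is thereby identified with a twisted invariant piece of the appropriate Milnor ring, reproducing the Krawitz description of the $A$ ring.

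For the gradings I would repeat the bookkeeping of the grading proposition for the $B$ ring. The cohomological grading $Q^A_+$ involves only $m$ and $n$, not the exterior degree, so it is transported unchanged and equals $m\cdot\deg^\dual+\deg\cdot n-\deg\cdot\deg^\dual$ as claimed. For the conformal grading $Q^A_-$ one runs the same computation with $\deg(P)$ now taken in $\Lambda^*(M_\CC)$ and with $n_g=\sum_j h_jv_j$ attached to $g\in G$; the identical cancellation of the $\sum_j q_j$ terms against $\deg\cdot\deg^\dual$ produces $m\cdot\deg^\dual-\deg\cdot n+\deg(P)-\deg\cdot\deg^\dual$.

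The main obstacle is precisely this grading match, not the isomorphism of underlying spaces. The duality identification relates the two complexes only up to the twist by $\Lambda^d(M_\CC)$ and the reflection $p\mapsto d-p$, and one must check that these two effects combine to the stated formula for $Q^A_-$ with the correct additive constant $-\deg\cdot\deg^\dual$ rather than a shift by some multiple of $\deg\cdot\deg^\dual$; indeed, a naive application of the spectral-flow flip $p\mapsto\hat c-p$ does not by itself reproduce the signs of the $m\cdot\deg^\dual$ and $\deg\cdot n$ terms, so the match must be established by the direct bookkeeping rather than inferred formally. Because the exterior degree for the $A$-complex is measured in $M$ while for the $B$-complex it is measured in $N$, I would carry out the comparison degree by degree on explicit representatives of the form \eqref{comesfrom}, now with $\Lambda^*(M_\CC)$ in place of $\Lambda^*(N_\CC)$, to be sure that no spurious shift is introduced and that the result agrees with the $Q^A_\pm$ of \cite{Krawitz}.
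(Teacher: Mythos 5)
Your proposal takes essentially the same route as the paper: the paper's entire proof consists of citing Krawitz's relations $Q^A_+=Q^B_+$ and $Q^A_-=(d-2\deg\cdot\deg^\dual)-Q^B_-$ and remarking that the analogous statement for the two complexes is clear from the natural isomorphism of $\Lambda^*(M_\CC)$ with $\Lambda^*(N_\CC)$, which is exactly your transport of Proposition \ref{B-iso} and its grading companion through the determinant-twisted duality. Your closing concern about the signs of $m\cdot\deg^\dual$ and $\deg\cdot n$ in $Q^A_-$ is a fair one --- the paper declares this bookkeeping clear rather than carrying it out on representatives --- but it is a matter of verifying conventions within the same argument, not a different method.
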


\begin{proof}
It has been observed in \cite{Krawitz} that $A$ and $B$ rings
are isomorphic as vector spaces and their bi-gradings satisfy
$Q^A_+=Q^B_+$
and 
$Q^A_-=d-2\deg\cdot\deg^\dual-Q^B_-$.
The analogous statement for our construction is clear in view of the natural
isomorphism of $\Lambda^*(M_\CC)$ and $\Lambda^*(N_\CC)$.
\end{proof}

\begin{remark}
The aforementioned one-dimensional spaces are rather important.
For example, they may easily switch the parity. From this point
of view, the description of $A$ and $B$  rings of \BH 
construction in terms of complexes, as opposed to Milnor rings,
is more natural. Of course, there is a deeper meaning to these
complexes which will become apparent in the vertex algebra setting.
\end{remark}

\section{Vertex algebra background}
\label{sec.vertprelim}
In this section we will give an informal overview of vertex algebras, $N=2$ structures
on vertex algebras, and the vertex algebras of $\sigma$-model type.
In the latter setting we will define chiral rings. We will also describe 
the lattice vertex algebras
constructed from a pair of dual lattices. All of the material
 can be found elsewhere, in particular in
\cite{chiralrings} but is included here for the benefit of the reader.
It provides the background necessary to understand 
Sections \ref{sec.BB-BH-voa} and \ref{sec.main} which form the heart 
of the paper.

\subsection{Vertex algebras}
A vertex algebra $V$ is a super vector space with an even element $\vert 0\rangle$ 
called \emph{vacuum} 
vector and a rather unusual structure $Y$ called state-field correspondence.
This correspondence $Y:V\to {\rm End}(V)[[z,z^{-1}]]$ 
assigns to every element $a\in V$ a  power series 
$$Y(a,z)=a(z)=
\sum_{l\in\ZZ}a_{(l)}z^{-l-1}$$ 
in the variable $z$ where $a_{(l)}$ are endomorphisms of $V$.
Power series $a(z)$ are called fields and $a_{(l)}$ are called their modes.

\medskip
The correspondence $Y$ must  satisfy a number of axioms, see 
\cite{Kac}.
As a consequence of these axioms,  
every two fields $a(z)$ and $b(z)$ satisfy the \emph{operator product expansion}
(called OPE for short) 
$$
a(z)b(w) = \sum_{j\leq  r} \frac {c_j(w)}{(z-w)^j}
$$
where $r$ is some positive integer and  $c_j(w)$ are other fields of the algebra. 
While all terms of the OPE are useful, one is especially interested in the poles
along $z=w$. Consequently, the above OPE is often abbreviated as 
$a(z)b(w) \sim \sum_{j=1}^r  \frac {c_j(w)}{(z-w)^j}$. 

\medskip
The so-called contour trick  allows one to
read off the supercommutators of the modes of $a(z)$ and $b(z)$ 
from the singular (i.e. $j>0$) part of the 
above OPE. Specifically, the series of supercommutators $[a_{(l)},b(w)]$
is given in the above notations by
$$
[a_{(l)},b(w)]=
{\rm Res}_{z=w}\Big (\sum_{j>0} \frac {z^lc_j(w)}{(z-w)^j}\Big).
$$
In particular, if the above OPE is nonsingular, i.e. $c_j=0$ for $j>0$,
then all modes of $a$ and $b$ supercommute.

\begin{remark}
Vertex algebras are sometimes called chiral algebras, since only holomorphic 
(a.k.a. chiral) variable $z$ is used, as opposed to $z$ and $\bar z$.
\end{remark}

\subsection{Vertex algebras of $\sigma$-model type and chiral rings}
Vertex algebras often come equipped with additional structures. The most common
additional structure is that of a representation of the Virasoro algebra. This means
a choice of an 
even field $L(z)$ of $V$ with a certain OPE with itself that translates into the following
commutator relations for the modes $L[k]:=L_{(k+1)}={\rm Res}_{z=0}L(z)z^{k+1}$
$$
[L[k],L[l]]=(k-l)L[k+l] + \frac c{12}\delta_{k+l}^0(k^3-k)
$$
where $\delta$ is the Kronecker symbol.
These are the relations of the Virasoro algebra with central charge $c$. 
In addition, $L[0]$ is assumed to provide a grading on $V$ and $L[1]$ is assumed
to correspond to differentiation of the fields, see \cite{Kac}.
The $N=2$ structure extends the Virasoro algebra to a somewhat larger superalgebra. 
It is characterized by a choice of three more fields $J(z)$, $G^+(z)$ and $G^-(z)$,
in addition to $L(z)$, which satisfy certain OPEs, see for example \cite{chiralrings}.
It appears naturally in the study of $\sigma$-models with Calabi-Yau target manifolds.
Traditionally, the central charge $c$ is wriiten as $3\hat c$ since $\hat c$ corresponds
to the dimension of the Calabi-Yau. The algebras with such $N=2$ structures
are called $N=2$ vertex algebras of central charge $\hat c$.

\medskip
The following definition lies at the heart of mirror symmetry as it was originally
understood by physicists.
\begin{definition}
For any vertex algebra with an $N=2$ structure, another such structure can be constructed
by switching $G^+$ with $G^-$, sending $J$ to $-J$ and keeping $L$ unchanged.
This involution on the set of $N=2$ structures is called the \emph{mirror involution}.
\end{definition}

The operators $L[0]=L_{(1)}$ and $J_{(0)}$ are of special interest. They commute with
each other and typically provide a double grading on $V$ (although this grading is
not to be confused with the double grading on the $A$ and $B$ rings that was considered
in the  previous section). The following definition was introduced in \cite{chiralrings}
to codify other desirable properties of the $N=2$ vertex algebras.
\begin{definition}
We call an $N=2$ vertex algebra $V$ an $N=2$ vertex algebra of $\sigma$-model type if
the eigenspaces $V_{\alpha}$ of $L_{(1)}$ are finite-dimensional for all $\alpha$ and are
zero except for $\alpha\in \frac 12\ZZ_{\geq 0}$, and the operators 
$H_A:=L_{(-1)}-\frac 12J_{(0)}$
and $H_B:=L_{(1)}+\frac 12 J_{(0)}$ have only nonnegative integer eigenvalues.
\end{definition}

\begin{definition}
The zero eigenspaces of $H_A$ and $H_B$ of $N=2$ algebra $V$ of 
$\sigma$-model type are called the $A$ and $B$ chiral rings of $V$ respectively.
\end{definition}

The following proposition can be found in \cite{LVW}. While the paper overall has
a number of string theory arguments, the proof of this proposition is completely mathematical.

\begin{proposition}\label{propprod}
Let $v_1$, $v_2$ be elements of the $A$ chiral ring of $V$ (or both are in the 
$B$ chiral ring of $V$).
Then their OPE is nonsingular
$$
v_1(z)v_2(w)=v_3(w) + \sum_{j<0} \frac {c_j(w)}{(z-w)^j}.
$$
Moreover, $v_1 \bullet v_2 :=v_3$ defines a supercommutative product on the $A$ chiral 
ring (respectively $B$ chiral ring).
\end{proposition}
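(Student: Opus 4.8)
The plan is to prove the assertion for the $B$ chiral ring $\ker H_B$; the $A$ chiral ring is handled by the identical (mirror-symmetric) argument with $H_B$ replaced by $H_A$ throughout, the $\sigma$-model-type hypothesis supplying the corresponding positivity $H_A\ge 0$. The whole proof turns on using $H_B=L_{(1)}+\frac{1}{2}J_{(0)}$ as a grading and feeding in the positivity built into the definition of $\sigma$-model type. First I would record how the modes $a_{(n)}$ move this grading. Since $L_{(1)}=L[0]$ is the conformal weight operator and $J_{(0)}$ is the charge operator (the zero mode of the weight-one current $J$), for an element $a$ homogeneous of weight $h_a$ and charge $q_a$ the Borcherds commutator formula gives $[L_{(1)},a_{(n)}]=(h_a-n-1)a_{(n)}$ and $[J_{(0)},a_{(n)}]=(J_{(0)}a)_{(n)}=q_a a_{(n)}$. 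Adding these shows that $a_{(n)}$ shifts the $H_B$-eigenvalue by $(h_a-n-1)+\frac{1}{2}q_a=H_B(a)-n-1$. Because $L_{(1)}$ and $J_{(0)}$ commute, every chiral ring element splits into such homogeneous pieces, so by bilinearity it suffices to treat homogeneous $v_1,v_2$.

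For nonsingularity I would take $v_1,v_2\in\ker H_B$. By the previous computation $(v_1)_{(n)}v_2$ lies in the $H_B$-eigenspace with eigenvalue $H_B(v_2)+(H_B(v_1)-n-1)=-n-1$. The $\sigma$-model-type hypothesis forces $H_B\ge 0$, so this eigenspace vanishes whenever $-n-1<0$, i.e.\ for all $n\ge 0$. Writing the operator product expansion as $v_1(z)v_2(w)=\sum_{n\ge 0}((v_1)_{(n)}v_2)(w)(z-w)^{-n-1}+(\text{regular})$, the entire singular part is therefore zero, which is exactly the nonsingularity claim. The same count gives $H_B((v_1)_{(-1)}v_2)=0$, so $v_3:=(v_1)_{(-1)}v_2$ again lies in $\ker H_B$ and the product $\bullet$ is closed on the $B$ ring.

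For supercommutativity I would invoke the vertex algebra skew-symmetry identity (\cite{Kac})
\[
a_{(-1)}b=(-1)^{|a||b|}\sum_{j\ge 0}\frac{(-1)^j}{j!}\,T^j\big(b_{(j-1)}a\big),
\]
where $T$ is the translation operator. Applied to $a=v_1$, $b=v_2$, every term with $j\ge 1$ contains $(v_2)_{(j-1)}v_1$ with $j-1\ge 0$, and these all vanish by the nonsingularity just established, now with the roles of $v_1$ and $v_2$ interchanged (both are still in the ring). Only the $j=0$ term survives, so $v_1\bullet v_2=(v_1)_{(-1)}v_2=(-1)^{|v_1||v_2|}(v_2)_{(-1)}v_1=(-1)^{|v_1||v_2|}\,v_2\bullet v_1$, which is the claimed supercommutativity.

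I expect the genuine care to be concentrated in this last step: once the grading shift and the positivity of $H_B$ are in hand, nonsingularity and closure are essentially automatic, but supercommutativity requires both halves of the skew-symmetry identity to interact correctly with the vanishing of all nonnegative modes, and the sign and translation-operator bookkeeping must be handled carefully. If in addition one wants $\bullet$ to be associative, so that the chiral ring is genuinely a ring, that is the one place where grading alone does not suffice: it would follow from the Borcherds associativity identity restricted to $\ker H_B$, again using that all products $v_{(n)}w$ with $n\ge 0$ vanish on the ring.
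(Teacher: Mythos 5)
Your proof is correct and follows essentially the same route as the paper's: the paper's (very brief) argument is precisely that a singular term in the OPE would have negative $H_A$ (resp.\ $H_B$) eigenvalue, impossible by the $\sigma$-model-type positivity, after which the product properties follow from general OPE identities. You have simply filled in the details the paper leaves implicit --- the mode-shift computation $[H_B,a_{(n)}]=(H_B(a)-n-1)a_{(n)}$ and the skew-symmetry identity giving supercommutativity --- which is a faithful expansion of the same argument.
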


\begin{proof}
The idea of the proof is that if the OPE of $v_1$ and $v_2$ 
had a singular part, then this singular part 
would have elements with $H_A<0$ (resp. $H_B<0$). Since this is not possible, the
OPE is nonsingular. Then the product properties follow from general properties of OPEs.
\end{proof}

\begin{remark}
Chiral rings of $N=2$ vertex algebras of $\sigma$-model type are naturally equipped
with \emph{conformal} grading that comes from $2L_{(1)}=\pm J_{(0)}$. 
Vertex algebras $V$ that occur in this paper have an additional finite \emph{cohomological}
grading. This grading should be thought of as some rudimental manifestation of the 
antiholomorphic variables. As a result, the $A$ and $B$ rings of our theory come with a double
grading. 
\end{remark}

\begin{remark}
The paper \cite{LVW} claims the existence of the so-called spectral flow, which in particular
(at the parameter value $1$) gives an isomorphism between $A$ and $B$ chiral
rings of $V$ \emph{as vector spaces}. It comes roughly speaking from 
$\ee^{\int J(z)}$, if such field can be constructed. We will not attempt to axiomatize the
spectral flow, but will rather construct it ad hoc for the vertex algebras under consideration.
\end{remark}

\subsection{Lattice vertex algebras}
We will collect here the standard facts about the lattice vertex algebras which 
form the computational basis of our construction. 
Specifically, for a pair of dual
lattices $M$ and $N$ we want to describe the vertex algebra $\F_{M\oplus N}$.
First, it contains a vertex subalgebra $\F_{{\bf 0}\oplus{\bf 0}}$ which is
generated by even (bosonic) fields $m^{bos}(z),~n^{bos}(z)$ with 
OPEs 
$$
m^{bos}(z)m^{bos}(w)\sim
n^{bos}(z)n^{bos}(w)\sim 0,~~~m^{bos}(z)n^{bos}(w)\sim \frac {m\cdot n}{(z-w)^2}
$$
and odd (fermionic) fields $m^{ferm}(z),~n^{ferm}(z)$ with OPEs
$$
m^{ferm}(z)m^{ferm}(w)
\sim
n^{ferm}(z)n^{ferm}(w)\sim 0, 
$$
$$m^{ferm}(z)n^{ferm}(w)\sim \frac {m\cdot n}{z-w}.
$$
The algebra $\F_{{\bf 0}\oplus{\bf 0}}$ can be thought of as a polynomial ring
in infinitely many even variables $(m_i)^{bos}_{(<0)}$, $(n_i)^{bos}_{(<0)}$
and infinitely many odd variables $(m_i)^{ferm}_{(<0)}$, $(n_i)^{ferm}_{(<0)}$
where $m_i$ and $n_i$ form bases of $M$ and $N$.

\medskip
Note that the endomorphisms $m^{bos}_{(0)}$ and $n^{bos}_{(0)}$ act
by zero on   $\F_{{\bf 0}\oplus{\bf 0}}$. As we pass  to $\F_{M\oplus N}$, we 
introduce other eigenspaces of these operators.
The algebra $\F_{M\oplus N}$ is isomorphic as a vector space 
to $\F_{{\bf 0}\oplus{\bf 0}}\otimes_\CC \CC[M\oplus N]$. There are additional 
operators $\ee^{\int m^{bos}(z)+n^{bos}(z)}$ whose construction is rather 
subtle (but is standard in the field of vertex algebras). 
One needs to use the normal ordering, as well as some explicit cocycle.
The reader is referred to \cite{chiralrings} for more details.

\medskip
The OPEs of $\ee^{\int m^{bos}(z)+n^{bos}(z)}$  with fermionic generators 
are nonsingular. The OPEs of $\ee^{\int m^{bos}(z)+n^{bos}(z)}$ 
with $\tilde m^{bos}(z)$ and $\tilde n^{bos}(z)$ are
$$
\tilde m^{bos}(z)\ee^{\int m^{bos}(w)+n^{bos}(w)} = \frac {\tilde m\cdot n}
{(z-w)}\ee^{\int m^{bos}(w)+n^{bos}(w)}
$$
$$
\tilde n^{bos}(z)\ee^{\int m^{bos}(w)+n^{bos}(w)} = \frac {m\cdot \tilde n}
{(z-w)}\ee^{\int m^{bos}(w)+n^{bos}(w)}.
$$
The OPEs of 
$\ee^{\int m^{bos}(z)+n^{bos}(z)}$  and $\ee^{\int \tilde m^{bos}(z)+\tilde n^{bos}(z)}$  
are obtained from expanding
$$
\ee^{\int m^{bos}(z)+n^{bos}(z)}\ee^{\int \tilde m^{bos}(z)+\tilde n^{bos}(w)}
= \ee^{\int m^{bos}(z) +  \tilde m^{bos}(w)+n^{bos}(z)+\tilde n^{bos}(w)}(z-w)
^{m\cdot \tilde n + \tilde m\cdot n}
$$
in powers of $z-w$.  In particular, the OPE is nonsingular iff  the pairing
$m\cdot \tilde n + \tilde m\cdot n$ is nonnegative. This fact will be used extensively
throughout the calculations of the paper.

\begin{remark}\label{pauli}
A version of Pauli exclusion principle implies that the OPE of $n^{ferm}$ with itself looks
like 
$$
n^{ferm}(z)n^{ferm}(w) = n^{ferm}(w)\partial_w n^{ferm}(w) (z-w)+ O(z-w)^2
$$
because $n^{ferm}(w)n^{ferm}(w)=0$.
\end{remark}

\section{Vertex algebras of Berglund-H\"ubsch mirror symmetry: definitions and
first properties}\label{sec.BB-BH-voa}

In this section, we define the vertex algebras of \BH  mirror symmetry and prove
their first properties. In particular, we extend the Key Lemma of \cite{borvert}
to the \BH setting, see Theorem \ref{keylemma}.

\subsection{Definition of vertex algebras of  Berglund-H\"ubsch mirror symmetry}
Let $W$ and  $G$ be the \BH potential and 
group, and let  $M$, $N$, $K_M$, $K_N$,
$\deg$, $\deg^\dual$, $\Delta$, $\Delta^\dual$ be defined
as in Subsection  \ref{ssec.combref}. Denote by $W^\dual$
and $G^\dual$ the dual potential and the dual group.

Consider the  lattice vertex algebra $\F_{M\oplus N}$
with the $N=2$ structure 
$$
\begin{array}{rcl}
G^+(z)&=&\sum_{i}(n^i)^{bos}(z)(m^i)^{ferm}(z) - \partial_z \deg
^{ferm}(z)\\
G^-(z)&=&\sum_{i}(m^i)^{bos}(z)(n^i)^{ferm}(z) - \partial_z
(\deg^\dual) ^{ferm}(z)\\
J(z)&=&\sum_{i}(m^i)^{ferm}(z)(n^i)^{ferm}(z) +\deg^{bos}(z)
-(\deg^\dual)^{bos}(z)\\
L(z)&=&\sum_{i}(m^i)^{bos}(z)(n^i)^{bos}(z)
+\frac 12\sum_i \partial_z(m^i)^{ferm}(z)(n^i)^{ferm}(z)\\
&&
- \frac 12\sum_i \partial_z(n^i)^{ferm}(z)(m^i)^{ferm}(z)\\
&&
 - \frac 12 \partial _z\deg^{bos}(z) 
 -\frac 12\partial_z(\deg^\dual)^{bos}(z)
\end{array}
$$
with the normal ordering implicitly used in the definition of $J$ and $L$.
This $N=2$ structure has central charge $\hat c=d-2\deg\cdot \deg^\dual$.
Consider the differential 
$$
D_{\bf 1,\bf 1}:={\rm Res}_{z=0}(\sum_{m\in\Delta} m^{ferm}(z)
\ee^{\int m^{bos}(z)}
+\sum_{n\in \Delta^\dual} n^{ferm}(z)
\ee^{\int n^{bos}(z)})
$$
or more generally the differential
$$
D_{f,g}:={\rm Res}_{z=0}(\sum_{m\in\Delta}f(m) m^{ferm}(z)
\ee^{\int m^{bos}(z)}
+\sum_{n\in \Delta^\dual} g(n)n^{ferm}(z)
\ee^{\int n^{bos}(z)})
$$
where $f$ and $g$ are generic coefficient functions.

\begin{definition}\label{vfg}
The vertex algebra $V_{f,g}$ is defined as the cohomology
of $\F_{M\oplus N}$ with respect to the differential $D_{f,g}$.
\end{definition}

\begin{remark}
To check that $D_{f,g}$ is indeed a differential, observe
that its summands  supercommute. This calculation
is based on the property $m\cdot n\geq 0$ for all $m\in\Delta$, $n\in\Delta^\dual$.
For exampe, if $m\cdot n=0$ then the OPE of $m^{ferm}(z)\ee^{\int m^{bos}(z)}$
and $n^{ferm}(w)\ee^{\int n^{bos}(w)}$ has no poles at $z=w$ coming from the bosons
or from fermions. If $m\cdot n>0$ then this OPE has a pole of order one coming from
the fermions, but a zero of order at least one coming from the bosons, so overall
the OPE is nonsingular.
\end{remark}

\begin{proposition}
The vertex algebra $V_{f,g}$ inherits the $N=2$ structure
above from $\F_{M\oplus N}$. 
\end{proposition}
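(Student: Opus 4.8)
The plan is to exhibit $D_{f,g}$ as an odd, square-zero derivation of $\F_{M\oplus N}$ and then to check that the four states generating the $N=2$ structure are $D_{f,g}$-closed; the general theory of cohomology of vertex algebras then does the rest. Concretely, write $D_{f,g}=Q_{(0)}$, the residue (zeroth mode) of the odd field $Q(z)=\sum_{m\in\Delta}f(m)\,m^{ferm}(z)\ee^{\int m^{bos}(z)}+\sum_{n\in\Delta^\dual}g(n)\,n^{ferm}(z)\ee^{\int n^{bos}(z)}$. The preceding remark shows $D_{f,g}^2=0$, and since $Q_{(0)}$ is a zero mode it is automatically an odd derivation of all products $a_{(k)}b$ (this is the $m=0$ case of the Borcherds commutator formula $[a_{(0)},b_{(k)}]=(a_{(0)}b)_{(k)}$). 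Hence the cohomology $V_{f,g}$ is again a vertex algebra. To transport the $N=2$ structure it suffices to prove that the states underlying $G^+$, $G^-$, $J$ and $L$ are annihilated by $Q_{(0)}$: the same commutator formula then gives $[Q_{(0)},\phi_{(k)}]=(Q_{(0)}\phi)_{(k)}=0$ for every mode of each $\phi\in\{G^+,G^-,J,L\}$, so all four fields descend to well-defined fields on $V_{f,g}$, and since the defining $N=2$ OPEs are identities among these modes that already hold in $\F_{M\oplus N}$, they persist on cohomology.

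By skew-symmetry, $Q_{(0)}\phi=\sum_{j\ge 0}\tfrac{(-1)^{j}}{j!}\,\partial^j(\phi_{(j)}Q)$ up to an overall sign, so everything reduces to computing the singular part of the OPE of each generating field with the two integrand types $m^{ferm}\ee^{\int m^{bos}}$ ($m\in\Delta$) and $n^{ferm}\ee^{\int n^{bos}}$ ($n\in\Delta^\dual$). The inputs are exactly the generator OPEs of Section \ref{sec.vertprelim}, together with: the degree-one relations $m\cdot\deg^\dual=1$ for $m\in\Delta$ and $\deg\cdot n=1$ for $n\in\Delta^\dual$; the Calabi-Yau placements $\deg\in M$ and $\deg^\dual\in N$; the dual-basis reconstructions $\sum_i(m\cdot n^i)m^i=m$ and $\sum_i(m^i\cdot n)n^i=n$; and the Pauli identity of Remark \ref{pauli}.

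First I would dispose of $J$ and $L$. Reading off the OPEs shows that each integrand field has total $J$-charge $0$ (the $+1$ from the fermion cancels the $-1$ from the boson, using $m\cdot\deg^\dual=1$, respectively $\deg\cdot n=1$) and conformal weight $1$ (the Sugawara bosons contribute $0$, and the background-charge term $-\tfrac12\partial\deg^{bos}-\tfrac12\partial(\deg^\dual)^{bos}$ supplies the $\tfrac12$ matching that of the fermion). Since $J$ carries no bosonic bilinear, a short check shows $J(z)Q(w)$ has only a simple pole, which vanishes by the charge computation; thus $J_{(j)}Q=0$ for all $j\ge 0$ and hence $Q_{(0)}J=0$. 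For $L$ one verifies in addition that $Q$ is $L$-primary---the background-charge term is precisely what cancels the would-be third-order pole---so that $L_{(1)}Q=Q$, $L_{(0)}Q=\partial Q$ and $L_{(j)}Q=0$ for $j\ge2$; substituting into the skew-symmetry sum gives $Q_{(0)}L=-\partial Q+\partial Q=0$.

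The main obstacle is $G^+$ (and, by the $M\leftrightarrow N$ symmetry of the whole construction, $G^-$), where the two integrand types behave very differently. For $m\in\Delta$ the OPE $G^+(z)\bigl(m^{ferm}\ee^{\int m^{bos}}\bigr)(w)$ turns out to be nonsingular: the only contraction, of $\sum_i(n^i)^{bos}(m^i)^{ferm}$ against $\ee^{\int m^{bos}}$, reassembles $\sum_i(m\cdot n^i)(m^i)^{ferm}=m^{ferm}$ and produces $m^{ferm}(z)m^{ferm}(w)$, which vanishes to first order by Remark \ref{pauli}, while $-\partial\deg^{ferm}$ contracts trivially since $\deg\in M$. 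For $n\in\Delta^\dual$, by contrast, the main term contributes a simple pole equal to the total derivative $\partial_w\ee^{\int n^{bos}(w)}$, whereas the correction $-\partial\deg^{ferm}$ contributes a double pole $\ee^{\int n^{bos}(w)}$---and this double pole exists precisely because $\deg\cdot n=1$. Feeding $G^+_{(0)}Q=\sum_n g(n)\,\partial\,\ee^{\int n^{bos}}$ and $G^+_{(1)}Q=\sum_n g(n)\,\ee^{\int n^{bos}}$ into the skew-symmetry formula, the simple-pole term and $\partial$ of the double-pole term cancel exactly, giving $Q_{(0)}G^+=0$. The hard part is thus bookkeeping this cancellation with the correct fermionic signs and confirming that the background-charge corrections in $G^\pm$ and $L$ are tuned exactly to the degree-one and Calabi-Yau conditions; once that is in hand, $G^-$ follows by symmetry and the proposition is proved.
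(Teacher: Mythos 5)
Your proof is correct and follows essentially the same route as the paper: the heart in both cases is the explicit OPE computation showing that $D_{f,g}$ supercommutes with $G^{\pm}(z)$, which hinges on $m\cdot\deg^\dual=\deg\cdot n=1$ together with the Pauli-type vanishing of Remark \ref{pauli}. The only (minor) difference is that the paper skips the direct checks for $J$ and $L$ by noting that these fields are produced by the supercommutators of $G^+$ and $G^-$, whereas you verify their invariance by hand.
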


\begin{proof}
One explicitly calculates that $D_{f,g}$ supercommutes with
$G^\pm(z)$. It is important for this calculation that 
$m\cdot \deg^\dual=\deg\cdot n=1$ for all $m\in \Delta$
and $n\in\Delta^\dual$. Since $J$ and $L$ can be calculated from the supercommutators
of $G^\pm$, the statement follows.
\end{proof}

\begin{remark}
Note that when we interchange $(W,G)$ with $(W^\dual,G^\dual)$,
we simply interchange $M$ and $N$ and data therein. 
The corresponding algebra $V$ is then the same, but the $N=2$
structure differs by mirror involution that interchanges $G^+$
and $G^-$, preserves $L$  and sends $J$ to $-J$. There is a 
minor subtlety related to the cocycle in the definition of the vertex
operators, see \cite{borvert} for details, which are still applicable
in the \BH setting.
\end{remark}

\begin{remark}
One can view $\F_{M\oplus N},D_{f,g}$ as a complex, with the grading provided
by $\F_{m\oplus n}\mapsto m\cdot\deg^\dual+\deg\cdot n$. We refer to the induced grading
on $V_{f,g}$
as cohomological. We will later see that it is related to the cohomological grading
on the $A$ and $B$ rings of \BH construction.
\end{remark}

\begin{remark}\label{grading}
The explicit formulas for the grading by $L_{(1)}$ and $J_{(0)}$ are given
in \cite{chiralrings}. They can be briefly described  as follows. 
The $(L_{(1)},J_{(0)})$ grading of $m^{ferm}$ and $n^{ferm}$
is $(\frac 12,1)$ and $(\frac 12,-1)$, 
the grading of $m^{bos}$ and $n^{bos}$ is $(1,0)$.
 Differentiation
changes the grading by $(1,0)$. 
The vertex operator $\ee^{\int m^{bos}(z)+n^{bos}(z)}$
has grading 
$$(m\cdot n + \frac 12 m\cdot \deg^\dual + \frac 12 \deg\cdot n,
-m\cdot \deg^\dual+\deg\cdot n).
$$
\end{remark}

\subsection{Key Lemma}
In this subsection we will prove the \BH analog of the Key Lemma of \cite{borvert}.
We shall first formulate a commutative algebra result
that will be used in the vertex algebra argument. 
Recall that we have dual lattices $M$ and $N$, the
cones $K_M$ and $K_N$ with $K_M\cdot K_N\geq 0$
and the sets $\Delta$ and $\Delta^\dual$ in $M$ and $N$, which 
encode the monomials of $W$ and $W^\dual$ respectively.
We will assume throughout that the coefficient functions 
$f$ and $g$ are generic, which in \BH setting is equivalent to
nonvanishing of all $f(m)$ and $g(n)$.
\begin{proposition}\label{combcond}
For every one-dimensional face of $K_N^\dual$ there 
exists a nonzero lattice point $v$ on it and elements 
$P_n\in \CC[M]$ such that
$$
[v] = \sum_{n\in \Delta^\dual} \Big(P_n
\sum_{m\in\Delta} f(m) (m\cdot n) [m]
\Big)
$$
where $P_n$ have the additional property that all of their monomials
$[w]$ satisfy $w\cdot n\geq -1$ and 
$w\cdot \hat n\geq 0$ for all $\hat n\in\Delta^\dual$ for $\hat n\neq n$.
\end{proposition}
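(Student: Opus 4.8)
The plan is to pass to logarithmic coordinates and recognize the inner sum as a logarithmic derivative of the potential. Let $(\xi_j)$ be the basis of $M_\RR$ dual to $(v_j)$, so that $\xi_j\cdot v_k=\delta_{jk}$, and write $x_j=[\xi_j]$; then $[m]=\prod_j x_j^{m\cdot v_j}$ and, setting $W_f:=\sum_{m\in\Delta}f(m)[m]$, we get $x_j\frac{\partial W_f}{\partial x_j}=\sum_{m\in\Delta}f(m)(m\cdot v_j)[m]$, which is exactly the inner sum appearing for $n=v_j$. Since $(v_j)$ is a basis of $N_\RR$, the cone $K_N$ is simplicial with $K_N^\dual=\sum_j\QQ_{\geq 0}\xi_j$, so its one-dimensional faces are the rays $\QQ_{\geq 0}\xi_{j_0}$ and a nonzero lattice point on such a ray is $v=\lambda\xi_{j_0}$ with $[v]=x_{j_0}^{\lambda}$ a pure power. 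The proposition thus reduces to the assertion that some power $x_{j_0}^{\lambda}$ lies in the logarithmic Jacobian ideal generated by the $x_j\frac{\partial W_f}{\partial x_j}$, with coefficients $P_{v_j}$ of the prescribed monomial support.

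Next I would supply the power using nondegeneracy. Because all $f(m)\neq 0$, a diagonal rescaling $x_j\mapsto t_j x_j$ (solvable since the matrix $(a_{ij})$ is invertible) carries the original potential to $W_f$, so $W_f$ is again nondegenerate; equivalently its partials form a regular sequence and the Milnor ring $\CC[{\bf x}]/\langle\partial_{x_1}W_f,\dots,\partial_{x_d}W_f\rangle$ is finite-dimensional, as recalled before Lemma~\ref{JacToLogJac}. Hence every variable is nilpotent modulo the Jacobian ideal and $x_{j_0}^{\lambda}\in\langle\partial_{x_1}W_f,\dots,\partial_{x_d}W_f\rangle$ for all large $\lambda$. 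I would then pick $\lambda$ both large enough for this membership and divisible enough that $\lambda\xi_{j_0}\in M$, which is possible because the rational ray $\QQ_{\geq 0}\xi_{j_0}$ meets the full-rank lattice $M$; note that $\lambda\xi_{j_0}\in M$ is the same as $x_{j_0}^{\lambda}$ being $G$-invariant.

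The one step genuinely requiring care is arranging the coefficients to lie in $\CC[M]$, that is, to be $G$-invariant. Given any polynomial representation $x_{j_0}^{\lambda}=\sum_j\tilde Q_j\,\partial_{x_j}W_f$ in $\CC[{\bf x}]$, I would apply the averaging operator of the finite abelian group $G$. Since $W_f$ is $G$-invariant and $\partial_{x_j}$ twists the $G$-action by the inverse of the character $\chi_j$ by which $G$ acts on $x_j$, the element $\partial_{x_j}W_f$ is a $\chi_j^{-1}$-eigenvector; averaging the identity therefore leaves the left-hand side unchanged and replaces each $\tilde Q_j$ by its component $Q_j$ transforming like $x_j$ (the action is by scalars on monomials, so $Q_j$ stays a polynomial). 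Then $P_{v_j}:=x_j^{-1}Q_j$ is $G$-invariant, hence lies in $\CC[M]$, and $[v]=\sum_j P_{v_j}\Big(\sum_{m\in\Delta}f(m)(m\cdot v_j)[m]\Big)$.

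Finally, the two support conditions fall out automatically. Each monomial $[w']$ of the polynomial $Q_j$ satisfies $w'\cdot v_k\geq 0$ for all $k$, and the corresponding monomial of $P_{v_j}=x_j^{-1}Q_j$ is $[w]=[w'-\xi_j]$, for which $w\cdot v_j=w'\cdot v_j-1\geq -1$ and $w\cdot v_{\hat j}=w'\cdot v_{\hat j}\geq 0$ when $\hat j\neq j$; these are precisely the required inequalities $w\cdot n\geq -1$ and $w\cdot\hat n\geq 0$ for $\hat n\in\Delta^\dual$, $\hat n\neq n$. I expect the real work to be bookkeeping rather than a deep obstacle: the substance is the single appeal to nondegeneracy (each variable is nilpotent in the Milnor ring), and the care lies in setting up the logarithmic dictionary so that rays of $K_N^\dual$ become pure powers and $\CC[M]$ becomes the $G$-invariant subring, and then in threading $G$-equivariance through the averaging so the coefficients land in $\CC[M]$ while their monomial support is preserved.
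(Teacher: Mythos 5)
Your proof is correct and follows essentially the same route as the paper's: identify $\CC[K_N^\dual]$ with the $G$-invariants of $\CC[x_1,\ldots,x_d]$, use nondegeneracy (partials form a regular sequence, so the graded Milnor ring is finite-dimensional) to place a $G$-invariant pure power $x_{j_0}^{\lambda}$ in the Jacobian ideal, rewrite in terms of the logarithmic derivatives $x_j\partial_{x_j}W_f$, and project the coefficients to the appropriate isotypic components. Your averaging over $G$ extracting the $\chi_j$-component of $\tilde Q_j$ is exactly the paper's step of ``dropping all monomials of $P_i$ not invariant under $G$,'' just phrased equivariantly.
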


\begin{proof}
Recall that $\CC[K_N^\dual]$ is the $G$-invariant subring of 
the polynomial ring $\CC[x_1,\ldots, x_d]$, see Section \ref{sec.BH-BB-comb}.  
The monomials in the potential $W$ correspond to the
elements  $m\in \Delta$. The rays of $K_N^\dual$ 
correspond to variables $x_i$. Without loss of generality, we may 
assume that the ray in question corresponds to $x_1$. Then 
the monomials $[v]$ correspond to monomials $x_1^l$.
Here $l$ is a multiple of 
some number, determined by the condition that this monomial is 
$G$-invariant.

\medskip
Since the potential $W$ is nondegenerate, the partial 
derivatives $\partial_i W$ form a regular sequence in 
$\CC[x_1,\ldots, x_d]$. As a result, the quotient by these elements
is finite-dimensional. It is also graded (we assign $x_i$ some positive weights $q_i$). 
Thus for all sufficiently large $l$ the monomial
$x_1^l$ lies in the ideal generated by $\partial_i W, i=1,\ldots, d$.
Pick one such $l$ with the additional property that $x_1^l$ is $G$-invariant.
There now exist polynomials $\tilde P_i(x)$
such that 
$$
x_1^l = \sum_{i=1} \tilde P_i(x) \partial_i W
$$
which we can rewrite as 
\begin{equation}\label{eqpart}
x_1^l = \sum_{i=1}P_i(x)  (x_i\partial_i W)
\end{equation}
with $P_i(x)= \frac{\tilde P_i(x)}{x_i}$.
Observe now that $x_i\partial_i W$ is a $G$-invariant polynomial
given by $\sum_{m\in\Delta} f(m) (m\cdot n_i) [m]$ for the corresponding
element $n_i\in\Delta^\dual$.
We can drop all of the monomials in $P_i(x)$ that are not invariant
under the group $G$. We keep calling the resulting polynomials $P_i(x)$
and observe that they correspond to $P_i\in\CC[M]$ that satisfy
 the conditions of the proposition.
\end{proof}

\begin{corollary}\label{corokey}
For every facet of $\theta\subset K_N$ 
there exist $\hat P_n\in\CC[M]$ such that 
$$
[0] = \sum_{n\in \Delta^\dual} \Big(\hat P_n
\sum_{m\in\Delta} f(m) (m\cdot n) [m]
\Big)
$$
where $\hat P_n$ have the following additional property. 
For any $n\in\Delta$ and any $\hat n\in\theta$
all monomials
$[w]$ of $P_n$ satisfy 
$w\cdot \hat n\geq -1$ if $\hat n=n$
and $w\cdot \hat n\geq 0$ if $\hat n\neq n$.
\end{corollary}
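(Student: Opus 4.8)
The plan is to deduce this corollary directly from Proposition \ref{combcond} using the standard duality between the facets of $K_N$ and the rays of $K_N^\dual$, followed by a single monomial shift. First I would observe that a facet $\theta$ of the cone $K_N \subseteq N$ determines a unique one-dimensional face of the dual cone $K_N^\dual \subseteq M$, namely the ray spanned by the supporting functional of $\theta$. Applying Proposition \ref{combcond} to this ray yields a nonzero lattice point $v \in K_N^\dual$ on it, together with polynomials $P_n \in \CC[M]$ satisfying
$$
[v] = \sum_{n\in\Delta^\dual}\Big(P_n \sum_{m\in\Delta} f(m)(m\cdot n)[m]\Big),
$$
where every monomial $[w]$ of $P_n$ obeys $w\cdot n \geq -1$ and $w\cdot\hat n \geq 0$ for all $\hat n\in\Delta^\dual$ with $\hat n\neq n$.

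The key geometric input is that, because $v$ generates the ray of $K_N^\dual$ dual to the facet $\theta$, the functional $v$ vanishes identically on $\theta$; in particular $v\cdot\hat n = 0$ for every $\hat n\in\Delta^\dual$ lying on $\theta$. I would then multiply the identity above by the monomial $[-v]$, which is available since $\CC[M]$ is the group ring of the lattice $M$ (indeed the $P_n$ already contain monomials of negative pairing). This gives
$$
[0] = \sum_{n\in\Delta^\dual}\Big(\hat P_n \sum_{m\in\Delta} f(m)(m\cdot n)[m]\Big), \qquad \hat P_n := [-v]\,P_n,
$$
which is the required expression for $[0]$.

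It remains to check the support conditions on $\hat P_n$, and here the shift is harmless precisely on $\theta$. A monomial $[w]$ of $P_n$ becomes the monomial $[w-v]$ of $\hat P_n$, and for any $\hat n\in\theta$ the vanishing $v\cdot\hat n=0$ gives $(w-v)\cdot\hat n = w\cdot\hat n$. Hence the pairing of a monomial of $\hat P_n$ with $\hat n\in\Delta^\dual\cap\theta$ coincides with that of the corresponding monomial of $P_n$, so the inequalities $w\cdot n\geq -1$ (when $\hat n=n$) and $w\cdot\hat n\geq 0$ (when $\hat n\neq n$) transport verbatim. I expect the only point requiring care to be the bookkeeping of the facet–ray duality together with the observation that the shift by $-v$ preserves the inequalities exactly because $v$ is orthogonal to $\theta$: away from $\theta$ one has $v\cdot\hat n>0$, which would worsen the pairing, but those $\hat n$ are precisely the rays for which the corollary imposes no condition, so no difficulty arises there.
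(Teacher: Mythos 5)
Your proof is correct and is essentially identical to the paper's: the paper's proof of Corollary \ref{corokey} is the one-line remark that one divides the identity of Proposition \ref{combcond} by the monomial $[v]$ lying on the ray of $K_N^\dual$ dual to $\theta$. Your additional verification---that multiplying by $[-v]$ leaves every pairing with $\hat n\in\theta$ unchanged because $v\cdot\hat n=0$ on the dual facet---is exactly the implicit content of that remark, spelled out.
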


\begin{proof}
We simply divide both sides of the result of Proposition
\ref{combcond} by $[v]$ that lies on the ray of $K_N^\dual$ 
which is dual to $\theta$.
\end{proof}

The Key Lemma below  reduces the cohomology
of $\F_{M\oplus N}$ by $D_{f,g}$ to
the cohomology of $\F_{K\oplus N}$ or $\F_{M\oplus K_N}$.
The idea is the following. The operator $D_{f,g}$ is the
sum of two parts, one coming from $\Delta$ and the other
coming from $\Delta^\dual$. These form two differentials
of the double complex, if $\F_{M\oplus N}$ is graded
by assigning $(m\cdot \deg^\dual, \deg \cdot n)$ to
$\F_{(m,n)}\subseteq \F_{M\oplus N}$. However, the
stupid filtrations do not converge. In fact, the cohomology
of the total complex is nontrivial, while the cohomology
with respect to the horizontal or the vertical part of the differential
are both trivial. We will find various homotopies to identity
for one of the differentials, such that their anticommutator
with the other will have certain properties.

\begin{theorem}\label{keylemma}
Let $f$ be a generic coefficient function. Then the
cohomology of $\F_{M\oplus N}$ with respect to $D_{f,g}$ 
is equal to the cohomology of $\F_{M\oplus K_N}$ 
with respect to $D_{f,g}$.
\end{theorem}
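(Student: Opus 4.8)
The plan is to realize $\F_{M\oplus K_N}$ as a subcomplex of $\F_{M\oplus N}$ and to prove that the inclusion is a quasi-isomorphism. Here $\F_{M\oplus K_N}$ is the subspace on which the $N$-component of the lattice momentum lies in $K_N$; it is a subcomplex because the first sum of $D_{f,g}$ fixes the $N$-momentum, while the second sum shifts it by elements of $\Delta^\dual\subseteq K_N$, and $K_N+\Delta^\dual\subseteq K_N$. It therefore suffices to show that the quotient $Q:=\F_{M\oplus N}/\F_{M\oplus K_N}$, spanned by the states whose $N$-momentum lies outside $K_N$, is acyclic. Writing $\nu_1,\dots,\nu_s\in K_N^\dual$ for the primitive inner normals of the facets of $K_N$, so that $K_N=\{n:\nu_i\cdot n\geq 0\ \forall i\}$, I would interpolate by the subcomplexes $\F_{M\oplus\{\nu_1\cdot n\geq 0,\dots,\nu_i\cdot n\geq 0\}}$ (each of these is a subcomplex for the same reason, as $\nu_i\cdot\Delta^\dual\geq 0$) and reduce to proving that at each step the quotient, supported where $\nu_i\cdot n<0$ while $\nu_j\cdot n\geq 0$ for all $j<i$, is acyclic.

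To contract across the facet $\theta$ with primitive inner normal $\nu:=\nu_i$ (the ray generator of $K_N^\dual$ dual to $\theta$ used in Corollary \ref{corokey}), I would proceed as follows. For $n\in N$ let $\phi_n:=(n^{ferm})_{(0)}$ be the fermion zero mode; using the OPE $m^{ferm}(z)n^{ferm}(w)\sim (m\cdot n)/(z-w)$ and the contour trick, the anticommutator of $\phi_n$ with the first sum of $D_{f,g}$ is, on the $M$-lattice, multiplication by the logarithmic derivative $\sum_{m\in\Delta}f(m)(m\cdot n)[m]$ dressed by higher oscillator modes; this is the vertex-algebra form of the logarithmic derivatives of Proposition \ref{combcond}. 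Feeding in the polynomials $\hat P_n\in\CC[M]$ furnished by Corollary \ref{corokey} for this facet (available since $f$ generic means $f(m)\neq 0$ for all $m$), I would set $h_\theta:=\sum_{n\in\Delta^\dual}\hat P_n\,\phi_n$, with $\hat P_n$ acting by the corresponding $M$-momentum shift. Since $M$-momentum shifts commute with the first sum and $\sum_n\hat P_n\big(\sum_m f(m)(m\cdot n)[m]\big)=[0]$, a residue computation with the lattice OPEs should show that $\Phi_\theta:=\{D_{f,g},h_\theta\}$ equals the identity plus an operator $R_\theta$: the identity comes from the relation $[0]=1$, while $R_\theta$ collects the oscillator dressing of the $\ee^{\int m^{bos}(z)}$ and the cross terms produced by the second ($\Delta^\dual$) sum of $D_{f,g}$.

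The crux is to show that $\Phi_\theta={\rm id}+R_\theta$ is invertible on this piece of $Q$. This is exactly where the fine print of Corollary \ref{corokey} enters: the bounds $w\cdot n\geq -1$ and $w\cdot\hat n\geq 0$ for $\hat n\neq n$ on the monomials $[w]$ of $\hat P_n$ guarantee that $h_\theta$ is well defined on the quotient (the OPEs of $\ee^{\int w^{bos}(z)}$ against the vertex operators of the second sum are at worst simple, and only in the single compensating slot $\hat n=n$) and that $R_\theta$ is strictly triangular for a grading assembled from the conformal weight $L_{(1)}$ and the facet pairing $\nu\cdot(\cdot)$. Since the $L_{(1)}$-eigenspaces are finite-dimensional and $\Phi_\theta$ preserves the cohomological grading, on each fixed conformal weight and cohomological degree the relevant space is finite-dimensional and $R_\theta$ is nilpotent, so $\Phi_\theta$ is invertible and $h_\theta\Phi_\theta^{-1}$ is a contracting homotopy. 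Because $h_\theta$ fixes the $N$-momentum it preserves the inequalities $\nu_j\cdot n\geq 0$ already imposed, so the induction over the facets goes through and all of $Q$ is acyclic.

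I expect the genuine difficulty to lie in this invertibility of $\Phi_\theta$ on the infinite-dimensional quotient, that is, in pinning down the operator identity $\{D_{f,g},h_\theta\}={\rm id}+R_\theta$ and in checking that $R_\theta$ is locally nilpotent. Both rest on matching the $z$-powers produced by the momenta against the tailored estimates of Corollary \ref{corokey}: it is the nondegeneracy of $W$ (the regular sequence underlying Proposition \ref{combcond}) that makes the relation $[0]=\sum_n\hat P_n\big(\sum_m f(m)(m\cdot n)[m]\big)$ available at all, and only the sharp bounds on the $\hat P_n$ keep the homotopy inside the quotient and force $R_\theta$ to raise the grading.
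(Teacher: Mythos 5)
Your overall strategy is the same as the paper's: realize $\F_{M\oplus K_N}$ as a subcomplex, reduce to acyclicity of quotients handled one facet of $K_N$ at a time, and for each facet use Corollary \ref{corokey} to manufacture an odd operator whose anticommutator with $D_{f,g}$ is the identity plus an error that strictly raises the pairing with the dual ray generator; this is exactly the scheme of Propositions 8.1--8.2 of \cite{borvert} that the paper follows. The genuine gap is in your construction of the homotopy, and hence in the operator identity on which everything rests. Your $h_\theta=\sum_{n}\hat P_n\,(n^{ferm})_{(0)}$, with $\hat P_n$ ``acting by the corresponding $M$-momentum shift,'' is a product of a naive lattice translation with a fermion zero mode, and for this operator the identity $\{D_{f,g},h_\theta\}=\mathrm{id}+R_\theta$ fails. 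Indeed, $\{(n^{ferm})_{(0)},D_f\}=\sum_m f(m)(m\cdot n)\,(\ee^{\int m^{bos}})_{(0)}$, and the zero mode $(\ee^{\int m^{bos}})_{(0)}$ is \emph{not} a translation: on a state of $N$-momentum $n'$ it vanishes unless $m\cdot n'\le -1$, and otherwise it creates oscillators. Consequently the cancellations of Corollary \ref{corokey}, which hold in $\CC[M]$, do not assemble into the identity operator; for instance, on any oscillator-free state whose $N$-momentum satisfies $m\cdot n'\ge 0$ for all $m\in\Delta$ (such states lie in your quotient whenever $K_N\subsetneq K_M^\dual$), your anticommutator with the $\Delta$-part is zero, not $\mathrm{id}$. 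The paper's operator is of a different nature: the monomials $[p]$ of $\hat P_n$ are promoted to vertex operators $\ee^{\int p^{bos}(z)}$, with their $z^{p_{(0)}}$ factors and oscillator tails, and $R_\theta$ is a mode of the \emph{composite field} $\sum_n n^{ferm}(z)\hat P_n(z)$, not a product of modes. Because $M$ is isotropic, these vertex operators multiply exactly like monomials of $\CC[M]$, so the first-order pole of the OPE with the $\Delta$-part of $D_{f,g}$ is the identity field and there are no higher poles; the anticommutator with the $\Delta$-part is then exactly the identity, with no correction terms at all.

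This exactness is not cosmetic; your triangularity argument needs it. The ``oscillator dressing'' corrections you propose to sweep into $R_\theta$ come from the $M$-side, so they do not move the facet pairing $\nu\cdot n$ at all; and once the homotopy is normalized so that the identity can appear in $\{D_{f,g},h_\theta\}$, that anticommutator preserves conformal weight, so none of its terms can strictly raise weight either. Hence no grading of the kind you describe makes the $M$-side errors nilpotent: the argument closes only because, with the correct construction, they vanish identically and the entire error comes from the $\Delta^\dual$-terms with $\hat n\notin\theta$ (the terms with $\hat n\in\theta$ are killed by the bounds of Corollary \ref{corokey} together with the Pauli zero of $n^{ferm}(z)n^{ferm}(w)$ of Remark \ref{pauli}, exactly as you anticipated), and each surviving term raises $\nu\cdot n$ by at least $1$. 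Separately, your finiteness claim is false: the $L_{(1)}$-eigenspaces of $\F_{M\oplus N}$ are infinite-dimensional (e.g.\ all states $\ee^{\int m^{bos}}$ with $m\cdot\deg^\dual=0$ have weight zero and cohomological degree zero); finite-dimensionality is part of the $\sigma$-model-type assertion for $V_{f,g}$ proved only in Theorem \ref{main}, so it cannot be an input here. Fortunately it is also unnecessary: local nilpotence of the error on each quotient follows from the facet-pairing increase alone, since any vector has finitely many components and each application of the error raises $\nu\cdot n$ by at least $1$, eventually pushing everything into the subcomplex. With these two repairs your argument becomes the paper's.
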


\begin{proof}
We follow the method of Propositions 8.1 and  8.2 of \cite{borvert}.
Let $m_0$ be a generator of a ray of $K_N^\dual$ and
$\theta$ the corresponding facet of $K_N$
Consider the operator
on $\F_{M\oplus N}$ given by 
$$
R_\theta :={\rm Res}_{z=0}
 \sum_{n\in \Delta^\dual} n^{ferm}(z)\hat P_n(z)
$$
where $\hat P_n$ is given by Corollary \ref{corokey} in
the sense that every every monomial $[p]$ of $\hat P_n$ is 
converted into $\ee^{\int p^{bos}(z)}$.

\medskip
It is a standard calculation to show that  for any $v\in \F_{M\oplus N}$
$$
R_\theta D_{f,g}v + D_{f,g}R_\theta v  = v+ \alpha_\theta v
$$
where $\alpha_\theta$ increases $m_0\cdot \bullet$ and
does not decrease $\hat m_0\cdot \bullet $ for other generators of 
rays of $K_N^\dual$. Indeed, the definition of $\hat P_n$ 
implies the OPE of 
$$
\sum_{n\in \Delta^\dual} n^{ferm}(z)\hat P_n(z) 
(\sum_{m\in\Delta} f(m) m^{ferm}(w)
\ee^{\int m^{bos}(w)}) \sim \frac 1{z-w}
$$
which in turn implies that the anticommutator of $R_\theta$ with 
the part of $D_{f,g}$ that comes from $M$ is the identity.
As a result, $\alpha_\theta$ is the anticommutator of $R_\theta$
with the $N$-part of $D_{f,g}$. Since $n\in\Delta^\dual$ pair
up nonnegatively with all $\hat m_0$, it remains to show 
that for $\hat n\in \theta$ the corresponding anticommutator is zero.
It then suffices to show that the OPE 
$$
\sum_{n\in \Delta^\dual} n^{ferm}(z)\hat P_n(z) 
\hat n^{ferm}(w)
\ee^{\int \hat n^{bos}(w)} 
$$
is nonsingular. This is assured by the conditions on monomials
of $P_n$. If $n\neq \hat n$, then the OPEs of all of the ingredients
above are nonsingular. If $n=\hat n$, then the OPE of the bosonic
parts may potentially have a pole of order one, along $z=w$
 but it is counteracted
by zero of order one coming from $n^{ferm}(z)n^{ferm}(w)$,
see Remark \ref{pauli}.

\medskip
The argument of Proposition 8.2 of \cite{borvert} now finishes
the proof. 
\end{proof}

\begin{remark}
The difference between \BH and \BB settings is that 
in \BH case we have to worry about the $n=\hat n$ in Corollary \ref{corokey}. 
It corresponds to the difference between a
potential that is nondegenerate in the sense of Batyrev \cite{batduke},
i.e. in the logarithmic coordinates, and the potential that is nondegenerate
in the sense of Berglund-H\"ubsch, i.e. in the usual coordinates.
\end{remark}

\subsection{Fields of the spectral flow.}
In this subsection we will provide two examples of fields of $V_{f,g}$ that will be 
useful later.  They should be thought of as $\ee^{\int \pm J(z)}$.
Consider the fields of $\F_{M\oplus N}$
given by 
$${\rm SF}_+(z)=\ee^{\int \deg^{bos}(z)-(\deg^\dual)^{bos}(z)}\Lambda^d M^{ferm}(z)$$
and
$${\rm SF}_-(z)=\ee^{\int -\deg^{bos}(z)+(\deg^\dual)^{bos}(z)}\Lambda^d N^{ferm}(z).$$
A specific choice of the generator
of $\Lambda^d N^{ferm}(z)$ is not easy to fix, but we can use an
integral basis of $N$ to do so up to $\pm 1$.

\begin{proposition} \label{sf}
Fields ${\rm SF}_\pm(z)$ descend to fields of $V_{f,g}$.
They have cohomological
degree zero. They have $L_{(1)}$ degree $\frac {\hat c} 2$ and 
$J_{(0)}$ degree $\pm \hat c$. In particular, ${\rm SF}_+$  lies in the $A$ chiral ring
of $V_{f,g}$ and ${\rm SF}_-$ lies in its $B$ chiral ring.
\end{proposition}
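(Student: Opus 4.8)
The statement has two essentially independent parts: the three grading claims, and the assertion that the fields descend to $V_{f,g}$. I would dispatch the gradings first, using the bookkeeping rules of Remark \ref{grading}. For ${\rm SF}_+(z)$ the exponential factor $\ee^{\int\deg^{bos}(z)-(\deg^\dual)^{bos}(z)}$ is a vertex operator with $M$-component $\deg$ and $N$-component $-\deg^\dual$, so its $(L_{(1)},J_{(0)})$-degree is $(-\deg\cdot\deg^\dual,\,-2\deg\cdot\deg^\dual)$, while the top fermion $\Lambda^d M^{ferm}(z)$ is a product of $d$ fields each of degree $(\frac12,1)$ and hence contributes $(\frac d2,\,d)$. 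Summing and substituting $\hat c=d-2\deg\cdot\deg^\dual$ yields $(\frac{\hat c}2,\hat c)$; the identical count for ${\rm SF}_-(z)$, whose fermions carry $J_{(0)}$-degree $-1$ each, gives $(\frac{\hat c}2,-\hat c)$. The cohomological degree of $\F_{m\oplus n}$ is $m\cdot\deg^\dual+\deg\cdot n$, and since only the exponential shifts the lattice component, this is $\deg\cdot\deg^\dual-\deg\cdot\deg^\dual=0$ in both cases.

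For the descent I would use that $D_{f,g}$ is a sum of zero modes $a_{(0)}$, for which the Borcherds commutator formula degenerates to $[a_{(0)},Y(b,w)]=Y(a_{(0)}b,w)$; thus a $D_{f,g}$-closed state automatically produces a well-defined field on the cohomology, and it suffices to prove $D_{f,g}\,{\rm SF}_\pm=0$. Since $D_{f,g}\,{\rm SF}_\pm$ is the sum of residues at $z=w$ of the operator products of the summands of the integrand with ${\rm SF}_\pm(w)$, it is enough to show each such OPE is nonsingular. Checking the two families against ${\rm SF}_+(w)$: for $m\in\Delta$, the boson--boson OPE of $\ee^{\int m^{bos}(z)}$ with $\ee^{\int\deg^{bos}(w)-(\deg^\dual)^{bos}(w)}$ carries the factor $(z-w)^{-m\cdot\deg^\dual}=(z-w)^{-1}$ by the lattice formula of Section \ref{sec.vertprelim} and $m\cdot\deg^\dual=1$, while the fermion $m^{ferm}(z)$ meets the top fermion $\Lambda^d M^{ferm}(w)$: the $M$--$M$ fermionic OPEs are nonsingular and the leading normally ordered term $:m^{ferm}\Lambda^d M^{ferm}:$ vanishes by the Pauli principle of Remark \ref{pauli}, giving a zero of order one. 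The two orders cancel and the OPE is regular. For $n\in\Delta^\dual$ the roles invert: the boson--boson OPE contributes $(z-w)^{\deg\cdot n}=(z-w)^{+1}$, whereas $n^{ferm}(z)$ contracts against $\Lambda^d M^{ferm}(w)$ through the simple poles $n^{ferm}(z)(m^i)^{ferm}(w)\sim (n\cdot m^i)/(z-w)$, producing a pole of order one; again the product is regular. The remaining cross OPEs of bosons with fermions are nonsingular by the lattice relations, so $D_{f,g}\,{\rm SF}_+=0$. The computation for ${\rm SF}_-$ is identical after exchanging $M$ and $N$ (the mirror involution), with the bosonic and fermionic orders interchanged in each case.

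The main obstacle is precisely this accounting of pole and zero orders: one must verify that the bosonic factor $(z-w)^{\mp1}$ and the fermionic factor $(z-w)^{\pm1}$ cancel exactly, leaving no residual pole, and this is where the normalizations $m\cdot\deg^\dual=1$ on $\Delta$ and $\deg\cdot n=1$ on $\Delta^\dual$ enter decisively, just as in the verification that $D_{f,g}$ is a differential. Once the descent and the gradings are in hand, the chiral ring membership is immediate: ${\rm SF}_+$ has $2L_{(1)}=\hat c=J_{(0)}$, hence is annihilated by $H_A=L_{(1)}-\frac12 J_{(0)}$ and lies in the $A$ chiral ring, while ${\rm SF}_-$ has $2L_{(1)}=\hat c=-J_{(0)}$, hence is annihilated by $H_B=L_{(1)}+\frac12 J_{(0)}$ and lies in the $B$ chiral ring.
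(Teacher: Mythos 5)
Your proposal is correct and follows essentially the same route as the paper: the descent is established by checking that the OPEs of the summands of $D_{f,g}$ with ${\rm SF}_\pm$ are nonsingular, with the order-one bosonic pole (from $m\cdot\deg^\dual=1$, resp.\ $\deg\cdot n=1$) cancelled by the order-one fermionic zero from the Pauli principle and vice versa, and the degree claims read off from Remark \ref{grading}. Your extra bookkeeping (the explicit grading computation and the zero-mode commutator formulation of descent, which is equivalent to the paper's statement that ${\rm SF}_\pm$ supercommute with $D_{f,g}$) simply fills in details the paper leaves implicit.
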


\begin{proof}
Note that the OPEs of ${\rm SF}_\pm$ with the fields $m^{ferm}(z)\ee^{\int m^{bos}(z)}, m\in\Delta$
and $n^{ferm}(z)\ee^{\int n^{bos}(z)}, n\in\Delta^\dual$ are nonsingular. Indeed,
the poles of order one at $z=w$ for the bosonic part are counteracted by the zeroes
of order one for the fermionic part and vice versa. 
Thus ${\rm SF}_\pm(z)$ supercommute with $D_{f,g}$ and descend to
fields of $V_{f,g}$. Their modes then descend to endomorphisms of $V_{f,g}$.
The degree statements are consequences of the Remark \ref{grading}.
\end{proof}

\section{Main Theorem}\label{sec.main}
In this section we will use the vertex algebra machinery to provide a
natural isomorphism between the $B$ ring of the \BH pair $(W,G)$ 
and the $A$ ring of dual pair $(W^\dual,G^\dual)$. We are working in
the notations of the previous section.

\subsection{Four subcomplexes.} Recall that the vertex algebra 
$V_{f,g}$ is the cohomology of $\F_{M\oplus N}$ with respect to
$D_{f,g}$. We will describe four subcomplexes of $\F_{M\oplus N}$
which are naturally isomorphic to the complexes of Section \ref{sec.V}
when $(f,g)=({\bf 1},{\bf 1})$.

\medskip
Let us denote by ${\mathcal C}_{K_N^\dual-\deg,K_N,N}$ the subspace 
of $\F_{M\oplus N}$ spanned by the elements that corresponds to the fields
of the form
\begin{equation}\label{c1}
\ee^{\int m^{bos}(z)-\deg^{bos}(z)+n^{bos}(z)}
P(\tilde n^{ferm}(z))
\end{equation}
where $m\in K_M, n\in K_M^\dual$ with $m\cdot n =0$ and $P$ is some 
polynomial in the odd anticommuting fields $\tilde n^{ferm}(z), \tilde n\in N$.
Three more subspaces ${\mathcal C}_{K_M-\deg,K_M^\dual,N}$,
${\mathcal C}_{K_N^\dual,K_N-\deg^\dual,M}$ 
and ${\mathcal C}_{K_M,K_M^\dual- \deg^\dual,M}$ are defined similarly.
\begin{proposition}\label{fourC}
The  four subspaces  
$${\mathcal C}_{K_N^\dual-\deg,K_N,N},~{\mathcal C}_{K_M-\deg,K_M^\dual,N},
~
{\mathcal C}_{K_N^\dual,K_N-\deg^\dual,M}
~ {\mathcal C}_{K_M,K_M^\dual- \deg^\dual,M}$$
of  
$\F_{M\oplus N}$ are preserved under the action
of $D_{f,g}$. For $(f,g)=({\bf 1,\bf 1})$, the $D_{f,g}$ cohomology of these complexes 
is naturally isomorphic to the $B$ ring of $(W,G)$, $A$ ring of $(W^\dual,G^\dual)$,
$A$ ring of $(W,G)$ and $B$ ring of $(W^\dual,G^\dual)$ respectively.
\end{proposition}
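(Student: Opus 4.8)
The plan is to exhibit each of the four subspaces as an honest subcomplex isomorphic, as a graded vector space equipped with its differential, to one of the complexes of Section \ref{sec.V}, and then to read off the cohomology from Propositions \ref{B-iso} and \ref{A-iso}. I will treat ${\mathcal C}_{K_N^\dual-\deg,K_N,N}$ in full and indicate how the other three follow by the symmetric computation. First I would set up the dictionary. The spanning states carry no bosonic oscillators, so I send $\ee^{\int m^{bos}(z)-\deg^{bos}(z)+n^{bos}(z)}P(\tilde n^{ferm}(z))$ with $m\cdot n=0$ to $[m\oplus n]\otimes\bar P$ in $\CC[(K_N^\dual\oplus K_N)_0]\otimes\Lambda^*(N_\CC)$, where $\bar P$ replaces each fermionic field $\tilde n^{ferm}(z)$ by the generator $\tilde n\in N_\CC$. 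The built-in orthogonality $m\cdot n=0$ is exactly the relation defining the quotient $\CC[(K_N^\dual\oplus K_N)_0]$, so the dictionary is a linear isomorphism; the $-\deg$ shift accounts for the grading twist by the one-dimensional spaces of Proposition \ref{B-iso} and does not affect the differential.

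The heart of the matter is to show that $D_{f,g}$ preserves this oscillator-free subspace and restricts to $d^B_{f,g}$. The mechanism is that ${\rm Res}_{z=0}$ extracts the coefficient of $z^{-1}$, while the vertex operator $\ee^{\int a^{bos}(z)}$ acting on a lowest-weight lattice state produces its bosonic oscillators only at strictly positive powers of $z$. For a summand $f(a)a^{ferm}(z)\ee^{\int a^{bos}(z)}$ with $a\in\Delta\subseteq M$, the bosonic exponential contributes a leading factor $z^{a\cdot n}$ and $a^{ferm}(z)$ contracts the $N$-fermions by $a$ via $a^{ferm}(z)\tilde n^{ferm}(w)\sim\frac{a\cdot\tilde n}{z-w}$, supplying the fermionic pole $z^{-1}$; by the Pauli bound of Remark \ref{pauli} there is no higher fermionic pole. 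The lowest total power is thus $z^{a\cdot n-1}$, realized by the oscillator-free term alone, so ${\rm Res}_{z=0}$ is nonzero precisely when $a\cdot n=0$, in which case it equals the oscillator-free state obtained by shifting the $M$-boson by $a$ and contracting by $a$. This is exactly $f(a)[a]\otimes({\rm contr.}a)$, and the vanishing for $a\cdot n>0$ is the vertex-algebra incarnation of the quotient $\CC[(K_N^\dual\oplus K_N)_0]$. Dually, for $b\in\Delta^\dual\subseteq N$ the bosonic leading factor is $z^{(m-\deg)\cdot b}=z^{m\cdot b-1}$, while $b^{ferm}(z)$ has nonsingular OPE with every $\tilde n^{ferm}$ and so acts by wedging $b$ with leading power $z^0$; the residue is nonzero precisely when $m\cdot b=0$, reproducing $g(b)[b]\otimes(\wedge b)$. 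Hence ${\mathcal C}_{K_N^\dual-\deg,K_N,N}$ is $D_{f,g}$-stable and $D_{f,g}$ becomes $d^B_{f,g}$ under the dictionary.

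The remaining three subcomplexes are handled identically after interchanging $M$ with $N$, wedge with contraction, and $\deg$ with $\deg^\dual$, and after replacing the cone pair $(K_N^\dual,K_N)$ by $(K_M,K_M^\dual)$ where the indexing calls for it: ${\mathcal C}_{K_M-\deg,K_M^\dual,N}$ becomes $\CC[(K_M\oplus K_M^\dual)_0]\otimes\Lambda^*(N_\CC)$, ${\mathcal C}_{K_N^\dual,K_N-\deg^\dual,M}$ becomes $\CC[(K_N^\dual\oplus K_N)_0]\otimes\Lambda^*(M_\CC)$ with differential $d^A_{f,g}$, and ${\mathcal C}_{K_M,K_M^\dual-\deg^\dual,M}$ becomes $\CC[(K_M\oplus K_M^\dual)_0]\otimes\Lambda^*(M_\CC)$. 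Specializing to $(f,g)=({\bf 1},{\bf 1})$ and invoking Proposition \ref{B-iso}, Proposition \ref{A-iso}, together with their $(W^\dual,G^\dual)$ analogues obtained by swapping the roles of $M$ and $N$, then identifies the four cohomologies with the $B$ ring of $(W,G)$, the $A$ ring of $(W^\dual,G^\dual)$, the $A$ ring of $(W,G)$, and the $B$ ring of $(W^\dual,G^\dual)$ respectively.

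The main obstacle I anticipate is making the ``oscillators only at higher powers of $z$'' argument fully rigorous together with the cocycle signs: one must verify that the creation modes $a^{bos}_{(-k)}$ with $k>0$ and the subleading fermionic modes genuinely raise the power of $z$, so that no combination sneaks a second contribution into the $z^{-1}$ coefficient, and that the sign produced by the cocycle in the vertex-operator construction matches the Koszul sign of contraction and wedge in $\Lambda^*(N_\CC)$. Tracking the $-\deg$ and $-\deg^\dual$ shifts is harmless for the differential but must be carried out carefully to match the conformal and cohomological gradings recorded in Propositions \ref{B-iso} and \ref{A-iso}.
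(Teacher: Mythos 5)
Your proposal is correct and follows essentially the same route as the paper: both verify stability of the oscillator-free subspaces by the residue/OPE computation (bosonic factor $(z-w)^{m_1\cdot n}$ against the first-order fermionic pole for the $\Delta$-terms, and $(z-w)^{m\cdot n_1-1}$ with nonsingular fermionic OPE for the $\Delta^\dual$-terms, using $K_M\cdot K_N\geq 0$ so that only the $m_1\cdot n=0$, resp.\ $m\cdot n_1=0$, terms survive), and both then identify the induced differential with $d^B$, resp.\ $d^A$, and invoke Propositions \ref{B-iso} and \ref{A-iso} together with their $M\leftrightarrow N$ swaps for the dual pair. The extra care you flag about oscillator modes and cocycle signs is reasonable but is not treated in more detail in the paper either.
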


\begin{proof}
Let us first focus on ${\mathcal C}_{K_N^\dual-\deg,K_N,N}$. 
The action of $D_{f,g}$ is computed by looking at the OPEs.
The OPE of \eqref{c1}
with $m_1^{ferm}(w)\ee^{\int m_1^{bos}(w)}$
for $m_1\in\Delta$ has a pole of order one at $z=w$ coming from the fermionic terms, 
with the residue given by the contraction of $P$ by $m_1$. The action of the corresponding
term of $D_{f,g}$ can thus be nonzero only if the bosonic terms introduce no positive 
powers of $(z-w)$, i.e. $m_1\cdot n=0$ (recall that $n\in K_N\subseteq K_M^\dual$, so
$m_1\cdot n\geq 0)$. Similarly, the OPE of \eqref{c1} with 
$n_1^{ferm}(w)\ee^{\int n_1^{bos}(w)}$ for $n_1\in\Delta$, has no poles coming from the 
fermionic parts and the pole of order at most one coming from the bosons. The pole
of order one is realized when $(m-\deg)\cdot n_1 = -1$ which means $m\cdot n_1=0$.
It is then easy to see that for $(f,g)=({\bf 1},{\bf 1})$ the complex is naturally isomorphic to that of Proposition \ref{B-iso}.

\medskip
The statement about the $B$ ring of $(W^\dual,G^\dual)$ is obtained by switching $M$ 
and $N$. The statements about the $A$ rings follow similarly, by an OPE calculation
and Proposition \ref{A-iso}.
\end{proof}

\begin{remark}
While we are guaranteed that the cohomology spaces of the above four subcomplexes map
to $V_{f,g}$, it is not at all obvious that these maps are injective. It will turn out to be the
case, moreover the images of the cohomology spaces of the first and second subcomplexes
coincide with the $B$ ring of $V_{f,g}$ and the images of the cohomology spaces
of the  third and fourth subcomplexes
coincide with the $A$ ring of $V_{f,g}$, see the proof of Theorem \ref{main} below.
\end{remark}

\subsection{Main theorem}
We are now ready to formulate and prove the main theorem of this paper.
Let $W$ and  $G$ be the \BH potential and 
the group, and let  $M$, $N$, $K_M$, $K_N$,
$\deg$, $\deg^\dual$, $\Delta$, $\Delta^\dual$ be defined
as in Subsection  \ref{ssec.combref}. Denote by $W^\dual$
and $G^\dual$ the dual potential and the dual group.

\begin{theorem}\label{main}
For generic choices of $f$ and $g$
the $N=2$ vertex algebra $D_{f,g}$ is 
of $\sigma$-model type. 
In particular, this algebra is of 
$\sigma$-model type for $(f,g)=({\bf 1},{\bf 1})$.
The $B$ ring of $V_{{\bf 1},{\bf 1}}$ can be identified with 
the $B$ ring of $(W,G)$ and with the $A$ ring of
$(W^\dual,G^\dual)$. The $A$ ring of $V_{{\bf 1},{\bf 1}}$ can
be identified with the $A$ ring of $(W,G)$ and with the $B$ ring of
$(W^\dual,G^\dual)$.
\end{theorem}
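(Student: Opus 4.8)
The plan is to deduce the theorem from the general $\sigma$-model machinery of \cite{chiralrings}, whose one genuinely nontrivial input in the present situation is the Key Lemma, now supplied by Theorem \ref{keylemma} (and by its mirror, obtained by interchanging $M$ and $N$). The argument breaks into two parts: showing that $V_{f,g}$ is of $\sigma$-model type for generic $f,g$, and then identifying its two chiral rings with the cohomologies of the four subcomplexes of Proposition \ref{fourC}. Note that $(f,g)=(\mathbf1,\mathbf1)$ is admissible throughout, since all of its coefficients are nonzero and hence Proposition \ref{combcond} and Corollary \ref{corokey}, on which Theorem \ref{keylemma} rests, apply.

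For the first part I would work directly with the gradings of Remark \ref{grading}. Since $D_{f,g}$ supercommutes with $G^\pm$, hence with $L$ and $J$, the operator $H_B=L_{(1)}+\frac12 J_{(0)}$ and its mirror $H_A$ descend to $V_{f,g}$, and what must be shown is that their eigenvalues there are nonnegative integers, with $L_{(1)}$-eigenspaces finite-dimensional and supported in $\frac12\ZZ_{\geq0}$. On $\F_{M\oplus N}$ itself, $H_B$ assigns to the ground state $\ee^{\int m^{bos}+n^{bos}}$ the value $m\cdot n+\deg\cdot n$, a nonnegative amount to every creation oscillator, and $-1$ only to the fermionic zero modes $n^{ferm}_{(0)}$; the mirror statement, with the value $m\cdot n+m\cdot\deg^\dual$ on the ground state, holds for $H_A$. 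Passing via Theorem \ref{keylemma} to $\F_{M\oplus K_N}$ forces $\deg\cdot n\geq0$ and thereby controls $H_B$, while passing to $\F_{K_M\oplus N}$ forces $m\cdot\deg^\dual\geq0$ and controls $H_A$. A filtration argument as in \cite{chiralrings}, exploiting $K_M\cdot K_N\geq0$ and the genericity of the coefficients, then shows that the $D_{f,g}$-cohomology is concentrated in the nonnegative range and finite-dimensional in each conformal degree.

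For the second part, once $\sigma$-model type is established the $B$ ring of $V_{f,g}$ is by definition the $H_B=0$ eigenspace and the $A$ ring the $H_A=0$ eigenspace. I would verify that the $H_B=0$ (resp. $H_A=0$) part produced by the concentration argument is computed by the subcomplexes ${\mathcal C}_{K_N^\dual-\deg,K_N,N}$ and ${\mathcal C}_{K_M-\deg,K_M^\dual,N}$ (resp. ${\mathcal C}_{K_N^\dual,K_N-\deg^\dual,M}$ and ${\mathcal C}_{K_M,K_M^\dual-\deg^\dual,M}$) of Proposition \ref{fourC}, so that their $D_{f,g}$-cohomologies map isomorphically onto the $B$ (resp. $A$) ring of $V_{f,g}$. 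Combined with the combinatorial identifications recorded in Proposition \ref{fourC} at $(f,g)=(\mathbf1,\mathbf1)$, this yields the two chains $B(V_{\mathbf1,\mathbf1})\cong B(W,G)\cong A(W^\dual,G^\dual)$ and $A(V_{\mathbf1,\mathbf1})\cong A(W,G)\cong B(W^\dual,G^\dual)$, in which the two combinatorial ends of each chain correspond to the two presentations of the same vertex algebra related by the mirror involution, which exchanges $M\leftrightarrow N$, $G^+\leftrightarrow G^-$, $J\to-J$, and hence $H_A\leftrightarrow H_B$.

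The hard part is the concentration statement, together with the injectivity flagged in the remark following Proposition \ref{fourC}: a priori the cohomologies of the small subcomplexes only map to $V_{f,g}$, and one must simultaneously exclude negative $H_A,H_B$ eigenvalues from the cohomology and rule out any collapse of these maps. This is exactly where the full strength of \cite{chiralrings} is needed, and where the one complication special to the \BH setting enters, namely the $n=\hat n$ contribution in Corollary \ref{corokey} that has no analogue in the \BB case. I would check that the Pauli-type vanishing of Remark \ref{pauli}, which already rescued the proof of Theorem \ref{keylemma}, likewise keeps the spectral-sequence bookkeeping of \cite{chiralrings} intact here.
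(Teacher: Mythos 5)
Your first half is essentially the paper's argument: Theorem \ref{keylemma} reduces $V_{f,g}$ to the cohomology of $\F_{K_M\oplus N}$ (resp.\ $\F_{M\oplus K_N}$ after switching $M$ and $N$), and Proposition 6.3 of \cite{chiralrings} (valid here since it does not use the full \BB combinatorics) gives both the concentration $H_A\geq 0$ and the statement that the $H_A=0$ part comes from $\F_{K_M\oplus K_M^\dual-\deg^\dual}$, i.e.\ from the fourth complex ${\mathcal C}_{K_M,K_M^\dual-\deg^\dual,M}$; the mirror switch then identifies the $B$ ring of $V_{f,g}$ with the cohomology of the first complex ${\mathcal C}_{K_N^\dual-\deg,K_N,N}$. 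The genuine gap is in your second half. You assert that the identifications with the remaining two complexes, ${\mathcal C}_{K_M-\deg,K_M^\dual,N}$ and ${\mathcal C}_{K_N^\dual,K_N-\deg^\dual,M}$, also fall out of the concentration argument, the two ends of each chain being ``related by the mirror involution.'' They do not, and cannot: the concentration argument is structurally tied to producing, in the $H_A=0$ part, fields whose fermionic factor lies in $\Lambda^*(M_\CC)$ and whose bosonic support lies in $K_M\oplus(K_M^\dual-\deg^\dual)$, and there are only two ways to run it (the given $N=2$ structure and its mirror), yielding exactly the fourth and first complexes. The second and third complexes are of mixed type (e.g.\ ${\mathcal C}_{K_M-\deg,K_M^\dual,N}$ has bosonic support governed by $K_M$ but fermions from $N$), and the mirror switch merely exchanges the first with the fourth and the second with the third: to get ``$B$ ring of $V$ $\cong$ cohomology of the second complex'' for $(W,G)$ you would need ``$A$ ring of $V$ $\cong$ cohomology of the third complex'' for $(W^\dual,G^\dual)$, which is the same missing statement for the dual data — the argument is circular. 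Note that the identification of the cohomologies of the first and second complexes is exactly the Berglund--H\"ubsch duality between the $B$ ring of $(W,G)$ and the $A$ ring of $(W^\dual,G^\dual)$, so what is missing is the heart of the theorem, not bookkeeping; the paper itself flags this by saying one can no longer appeal to \cite{chiralrings} at this point.

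The paper fills the gap with an ingredient entirely absent from your proposal: the spectral flow fields ${\rm SF}_\pm(z)$ of Proposition \ref{sf}. Reading off the leading term of the OPE with ${\rm SF}_+(z)=\ee^{\int \deg^{bos}(z)-(\deg^\dual)^{bos}(z)}\Lambda^d M^{ferm}(z)$ sends a field $\ee^{\int m^{bos}-\deg^{bos}+n^{bos}}P(\tilde n^{ferm})$ of the second complex to the field $\ee^{\int m^{bos}+n^{bos}-(\deg^\dual)^{bos}}P^\dual(\tilde m^{ferm})$ of the fourth, where $P^\dual$ is the contraction of $\Lambda^d M_\CC$ by $P$; the OPE with ${\rm SF}_-$ inverts this, and similarly the first and third complexes are matched. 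Since ${\rm SF}_\pm$ supercommute with $D_{f,g}$ and hence survive as fields of $V_{f,g}$, these maps descend to cohomology, giving injections between the $A$ and $B$ rings of $V_{f,g}$ in both directions; finite-dimensionality then forces all these maps to be isomorphisms, which simultaneously settles the injectivity you flagged after Proposition \ref{fourC}, produces the two remaining identifications, and yields the vector-space isomorphism between the $A$ and $B$ rings of $V_{f,g}$. Any completion of your argument needs this spectral-flow step (or an equivalent substitute); the Pauli-principle observation of Remark \ref{pauli}, which you invoke, is relevant only to the Key Lemma and does not address this.
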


\begin{proof}
We will first show that the $B$ and the $A$ rings of $V_{f,g}$ 
are isomorphic to the cohomology spaces of the first and 
the fourth subcomplexes of Proposition \ref{fourC}, 
respectively.

\medskip
Theorem \ref{keylemma} implies that the vertex algebra $V_{f,g}$ 
is the cohomology of $\F_{K_M\oplus N}$ by 
$D_{f,g}$. We will write $D_{f,g}=D_f +D_g$ 
where $D_f$ denotes the terms that come from $M$ and 
$D_g$ denotes the terms that come from $N$.
We now apply Proposition 6.3 of \cite{chiralrings}.
It asserts that for
strongly nondegenerate coefficient functions $f$ the $D_f$-cohomology
of $\F_{K_M\oplus N}$ has $H_A\geq 0$ and moreover
the $H_A=0$ part (i.e. the $A$ chiral ring of $V_{f,g}$) comes from
$\F_{K_M\oplus K_M^\dual-\deg^\dual}$, in the sense that the induced
map on the $H_A=0$ graded pieces of cohomology is an isomorphism.
While Proposition 6.3 of 
\cite{chiralrings} was stated in the setting of \BB mirror symmetry,
it does not use full combinatorial ingredients of the construction
and so it applies to \BH case.

\medskip
Generic $f$ are 
strongly nondegenerate (the definition of strong nondegeneracy
is given in \cite{chiralrings}). Because of the scaling 
symmetries, all $f$ with $f(m)\neq 0, \forall m\in\Delta$, and 
in particular $f={\bf 1}$, have the same behavior and are thus 
strongly nondegenerate. 

\begin{remark}
It is worth mentioning that the proof of Proposition 6.3 of \cite{chiralrings}
is in fact rather easy in the \BH case, since one does not need to
do any fan subdivisions of $K_M$. However, we chose to quote
\cite{chiralrings} since the full power of the argument will be
necessary when we unify the two constructions in the next section.
\end{remark}

Now, as in Proposition 7.3 of 
\cite{chiralrings}, we observe that the $H_A=0$ part 
of $\F_{K_M\oplus K_M^\dual-\deg^\dual}$ is made of linear combinations
of elements that correspond to fields of the form
$$
\ee^{\int m^{bos}(z)+n^{bos}(z)-(\deg^\dual)^{bos}(z)}
P(\tilde m^{ferm}(z))
$$
where $m\in K_M, n\in K_M^\dual$ with $m\cdot n =0$. Thus it comes
from the fourth complex of Proposition \ref{fourC}. By switching $M$ and $N$, we 
see that the $B$ ring of $V_{f,g}$ is calculated by the first complex of 
Proposition  \ref{fourC}. Also, the results of \cite{chiralrings} ensure
that for generic $(f,g)$ the vertex algebra $V_{f,g}$ is of $\sigma$-model type.
The reader may be somewhat concerned about the proof of the statement 
that $L_{(1)}$ graded components of $V_{f,g}$ are finite-dimensional, 
since the argument of \cite{chiralrings} refers to \cite{BLInv}. However, 
one can simply use Proposition 6.5 of \cite{chiralrings} which states that 
a fixed $H_A$ eigenspace of $V_{f,g}$ comes from $\F_{K_M\oplus K_M^\dual-r\deg^\dual}$
for some $r$. If one first takes cohomology with respect
to $D_g$, then we can reduce $m$ to a finite list of possibilities. 
Then it remains to observe that for a fixed $m$ and $r$ the bigraded component
of $\F_{m\oplus  K_M^\dual-r\deg^\dual}$ is finite-dimensional.
Indeed, in the formula of  Remark \ref{grading} we have $m\cdot n\geq -rm\cdot\deg^\dual$,
and thus $\deg\cdot n$ is bonded from above. Thus, we have a finite list 
of possible $n$ as well, and then the fact that $L_{(1)}$ is fixed implies 
the finiteness.

\medskip
We will now work to give alternative descriptions of $A$ and $B$ rings 
of $V_{f,g}$ by means of the third and second complexes of Proposition
\ref{fourC} respectively. We can no longer directly appeal to the results of \cite{chiralrings}.
Instead, we will use the idea of the spectral flow of \cite{LVW}.

\medskip
An easy calculation of  grading, see Remark \ref{grading},
shows that the cohomology of the second 
complex maps inside the $B$ ring of $V_{f,g}$, and the cohomology of 
the third complex maps inside the $A$ ring of $V_{f,g}$. 
On the other hand, there are natural isomorphisms of complexes
between the second and the fourth complexes, as well as the first 
and the third complexes, which we construct below.

\medskip
Consider a $(L_{(1)},J_{(0)})$ degree $(\frac k2,-k)$ element of the 
second complex. It is a linear combination of elements 
$$
\ee^{\int m^{bos}(z)-\deg^{bos}(z)+n^{bos}(z)}
P(\tilde n^{ferm}),~~m\in {K_M},n\in K_M^\dual
$$
with $m\cdot \deg^\dual -\deg\cdot\deg^\dual
-\deg\cdot n+\deg(P)=k$. If we consider the OPE of the above element with the 
spectral flow operator ${\rm SF}_+$ of Proposition \ref{sf}
$$
{\rm SF}_+(z)=\ee^{\int \deg^{bos}(z)-(\deg^\dual)^{bos}(z)}\Lambda^d M^{ferm}(z)
$$
the leading term will be 
$$
(z-w)^{-k}\ee^{\int m^{bos}(w)+n^{bos}(w)-(\deg^\dual)^{bos}(w)}
P^\dual(\tilde m^{ferm}),~~m\in {K_M},n\in K_M^\dual
$$
where $P^\dual$ is obtained by contracting $\Lambda^d M_\CC$ with $P$.
This gives an element of the fourth complex with $(L_{(1)},J_{(0)})$ 
grading $(\frac {\hat c-k}2,\hat c-k)$. There is a natural map 
in the opposite direction given by reading off the leading term of 
OPE with ${\rm SF}_-$. Clearly, these two maps are inverses of 
each other in $\F_{M\oplus N}$. Since ${\rm  SF}_\pm$ 
are fields of $V_{f,g}$ and since reading off the leading term
of the OPE makes sense in $V_{f,g}$, these two maps give inverses
of each other as maps between the image of the cohomology of 
the second complex and the cohomology of the fourth 
complex (i.e. the $A$ ring of $V_{f,g}$). 

\medskip
We have thus constructed an injective map from the $A$ ring of $V_{f,g}$ 
to the $B$ ring of $V_{f,g}$ which sends an element of $(L_{(1)},J_{(0)})$
degree $(\frac l2 ,l)$ to an element of degree $(\frac {\hat c - l}2, l -\hat c)$ by reading
off the $(z-w)^{-l}$ term of OPE with ${\rm SF}_-$. Similarly, there is 
an injective map in the other direction given by reading off the appropriate
term of OPE with ${\rm SF}_+$. Since the dimensions of the $A$ and $B$ 
rings are finite, it means that they coincide. In addition, we see that the 
above spectral flow produces a vector space isomorphism of
$A$ and $B$ rings.

\medskip
It remains to observe that since $({\bf 1},{\bf 1})$ is a pair of strongly nondegenerate
coordinate functions, the above argument works for it. Then Proposition \ref{fourC}
finishes the proof of Theorem \ref{main}.
\end{proof}

\begin{remark}
Since the cohomological grading of ${\rm SF}_\pm$ is zero, the 
spectral flow isomorphism constructed in the proof of Theorem \ref{main}
preserves the cohomological grading. Also observe that the spectral flow
isomorphism corresponds naturally to passing between
$\Lambda^*(N)$ 
and  $\Lambda^*(M)$.
\end{remark}

\begin{remark}
It is important to point out that the isomorphism between the $B$ ring
of $(W,G)$ and the $A$ ring of $(W^\dual,G^\dual)$ constructed in
Theorem \ref{main} is not easy to write explicitly. It means passing
from fields of the form
$$
\ee^{\int m^{bos}(z)-\deg^{bos}(z)+n^{bos}(z)}
P(\tilde n^{ferm}),~~m\in {K_M},n\in K_M^\dual
$$
to the fields of the form
$$
\ee^{\int m^{bos}(z)-\deg^{bos}(z)+n^{bos}(z)}
P(\tilde n^{ferm}),~~m\in {K_N^\dual},n\in K_N
$$
modulo the image of $D_{f,g}$. Presumably, this is what was accomplished
ad hoc in \cite{Krawitz} by using the classification of invertible potentials.
\end{remark}

\section{Unification of Berglund-H\"ubsch and Batyrev-Borisov constructions}
\label{sec.unification}

In this section we describe the unified setting of duality 
that includes both \BB and \BH as special cases. We comment
on necessary combinatorial conditions.  

\subsection{Combinatorial conditions}
We start with dual lattices $M$ and $N$ with chosen
elements $\deg$ and $\deg^\dual$ in them
and collections of elements $\Delta\subset M$ and $\Delta^\dual\subset N$. 
We first require that for all $m\in\Delta$, $n\in \Delta^\dual$ there holds
$\deg \cdot n=m\cdot \deg^\dual=1$ and $m\cdot n\geq 0$.
We define the cones $K_M$ and $K_N$ as being spanned by $\Delta$
and $\Delta^\dual$ respectively. We abuse the notation to use the 
same symbols for the sets of the lattice points in these cones.
We assume that $K_M$ and $K_N$ are of full dimension. 
Their duals are denoted by $K_N^\dual$ and $K_M^\dual$ and we have 
$$
K_N^\dual\supseteq K_M,~K_M^\dual\supseteq K_N.
$$
We also consider the coefficient functions $f:\Delta\to \CC$ and $g:\Delta^\dual\to \CC$.

\begin{definition}\label{unified}
The data $(M,N,\Delta,\Delta^\dual,\deg,\deg^\dual)$ above are said to
give the discrete data of  toric mirror symmetry if the following
two (dual) conditions are satisfied for generic $(f,g)$.
First, for every one-dimensional face of $K_N^\dual$ there 
exists a nonzero lattice point $v$ on it and elements 
$P_n\in \CC[M]$ such that
$$
[v] = \sum_{n\in \Delta^\dual} \Big(P_n
\sum_{m\in\Delta} f(m) (m\cdot n) [m]
\Big)
$$
where $P_n$ have the additional property that all of their monomials
$[w]$ satisfy $w\cdot n\geq -1$ and 
$w\cdot \hat n\geq 0$ for all $\hat n\in\Delta$ for $\hat n\neq n$.
Second, we assume that the dual statement for $K_M^\dual$ and $g$
holds.
\end{definition}

\begin{remark}
We observe that the above conditions are precisely the ones we have proved
in Proposition \ref{combcond} for the \BH case, together with the dual statement.
These conditions also hold for the \BB case where they amount to Batyrev's
nondegeneracy condition from \cite{batduke}.
\end{remark}

We will give a simple restatement of the condition of Definition \ref{unified}
in terms of the homogeneous coordinate rings of \cite{Cox}. For the cone 
$K_N^\dual$ and the set $\Delta^\dual$ consider the polynomial ring 
$\CC[x_i]$ with the number of variables equal to the number of elements in
$\Delta^\dual$. One can encode an element of $K_N^\dual$ by its 
pairings with elements of $\Delta^\dual$. Conversely, a collection of pairings
with elements of $\Delta^\dual$ gives an element of $K_N^\dual$ provided
all the pairings are nonnegative and satisfy some linear relations and/or congruences.
This easy observation identifies $\CC[K_N^\dual]$ as the invariant subring
of $\CC[x_1,\ldots,x_{\sharp(\Delta^\dual)}]$ under the diagonal action of some
abelian algebraic group $G$ of dimension $\sharp(\Delta^\dual)-\rk M$.
\begin{proposition}\label{prop.unified}
Define the potential $W=\sum_{m\in\Delta} f(m)\prod_{n\in\Delta^\dual} x_n^{m\cdot n}$.
Consider the Jacobian ideal ${\rm Jac}(W)$ generated by the partial derivatives
of $W$. Then the first condition of Definition \ref{unified} is equivalent 
to the condition that 
$$
\CC[K_N^\dual]/(\CC[K_N^\dual]\cap {\rm Jac}(W))
$$
is finite-dimensional.
\end{proposition}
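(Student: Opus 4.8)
The plan is to reduce the statement to two independent ingredients: a dictionary that identifies $\CC[K_N^\dual]\cap{\rm Jac}(W)$ with the ``logarithmic Jacobian ideal'' assembled from the expressions in Definition \ref{unified}, and an elementary finiteness criterion for graded quotients of the semigroup ring $\CC[K_N^\dual]$. Write $R=\CC[x_1,\ldots,x_{\sharp(\Delta^\dual)}]$, so that $R^G=\CC[K_N^\dual]$ for the diagonalizable group $G$ of the discussion preceding the proposition, and let $\Ia\subseteq\CC[K_N^\dual]$ denote the $\CC$-span of all elements $\sum_{n\in\Delta^\dual}P_n\sum_{m\in\Delta}f(m)(m\cdot n)[m]$ in which the $P_n\in\CC[M]$ satisfy the positivity constraints of Definition \ref{unified}, i.e.\ every monomial $[w]$ of $P_n$ has $w\cdot n\geq -1$ and $w\cdot\hat n\geq 0$ for $\hat n\neq n$. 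With this notation the first condition of Definition \ref{unified} is literally the assertion that $\Ia$ contains $[v]$ for a nonzero lattice point $v$ on each ray of $K_N^\dual$, so the proposition will follow once I show (i) $\Ia={\rm Jac}(W)\cap\CC[K_N^\dual]$, and (ii) $\CC[K_N^\dual]/\Ia$ is finite-dimensional if and only if $\Ia$ meets every ray of $K_N^\dual$ in this sense.

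For (i) I would first note that $x_n\partial_{x_n}W=\sum_{m\in\Delta}f(m)(m\cdot n)[m]$ is $G$-invariant and lies in $\CC[K_N^\dual]$, and that the constraint on the monomials of $P_n$ is exactly what guarantees that $P_n(x_n\partial_{x_n}W)$ is again a genuine element of $\CC[K_N^\dual]\cap{\rm Jac}(W)$: for $\hat n\neq n$ the exponent $(w+m)\cdot\hat n\geq 0$ automatically, while for $\hat n=n$ the only dangerous monomials have $m\cdot n=0$, and these carry coefficient $f(m)(m\cdot n)=0$. This gives $\Ia\subseteq{\rm Jac}(W)\cap\CC[K_N^\dual]$. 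For the reverse inclusion I would use that $G$ is linearly reductive, so ${\rm Jac}(W)\cap R^G$ is the $G$-invariant part of the $G$-stable ideal ${\rm Jac}(W)$; projecting a representation $h=\sum_n g_n\partial_{x_n}W$ onto its weight-zero isotypic component lets me assume each $g_n$ is $G$-homogeneous of the weight of $x_n$. Reading off a monomial $\prod x_{n'}^{c_{n'}}$ of such a $g_n$, its weight forces $(c_{n'}-\delta_{n,n'})_{n'}=(w\cdot n')_{n'}$ for some $w\in M$, and nonnegativity of the $c_{n'}$ translates into precisely $w\cdot n\geq -1$ and $w\cdot\hat n\geq 0$ for $\hat n\neq n$; hence $g_n=x_nP_n$ with $P_n$ as required and $h\in\Ia$. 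This weight bookkeeping is the technical heart of the argument.

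For (ii) I would work with the grading of $\CC[K_N^\dual]$ in which $[w]$ has degree $w\cdot\deg^\dual$; this is strictly positive on $K_N^\dual\setminus\{0\}$ because the positive quasi-homogeneity weights $q_n$ place $\deg^\dual=\sum_nq_nn$ in the interior of $K_N$, and $\Ia$ is homogeneous since each $[m]$ has degree $m\cdot\deg^\dual=1$. If $\Ia$ contains a nonzero monomial $[v_\tau]$ on each ray $\tau$, then $\Ia$ contains the monomial ideal $\Aa$ generated by these. By the orbit--face dictionary for the affine toric variety ${\rm Spec}\,\CC[K_N^\dual]$ (whose cone is $K_N$), every proper face of $K_N$ lies in a facet, and the ray generator of $K_N^\dual$ dual to that facet is perpendicular to the face, hence nonvanishing on the corresponding orbit; since $V(\Aa)$ is torus-stable, it can therefore meet no orbit but the fixed point, so $V(\Aa)$ is a single point and $\CC[K_N^\dual]/\Aa$ is Artinian. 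Consequently $\CC[K_N^\dual]/\Ia$ is finite-dimensional. Conversely, if $\CC[K_N^\dual]/\Ia$ is finite-dimensional then, being positively graded, it vanishes in all large degrees, so for each ray the positive-degree element $[v_\tau]$ is nilpotent modulo $\Ia$ and some power $[kv_\tau]\in\Ia$ is a nonzero lattice point on $\tau$.

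The main obstacle I expect is step (i): one must verify that the purely algebraic positivity constraints imposed on the $P_n$ in Definition \ref{unified} coincide exactly---neither more nor less---with the condition that the associated elements of $R$ be honest polynomials lying in ${\rm Jac}(W)$, and that the vanishing of the coefficients $f(m)(m\cdot n)$ along the locus $m\cdot n=0$ is precisely what prevents negative exponents in the variable $x_n$. Once this dictionary is in place, step (ii) is the standard fact that a positively graded quotient of a semigroup ring is finite-dimensional exactly when its defining ideal is supported at the torus-fixed point.
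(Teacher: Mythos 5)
Your proposal is correct in substance and follows essentially the same route as the paper's proof: the paper also factors the argument into (a) the dictionary identifying the constrained expressions of Definition \ref{unified} with elements of $\CC[K_N^\dual]\cap{\rm Jac}(W)$ --- your isotypic/weight projection is exactly the step in the proof of Proposition \ref{combcond} where the non-$G$-invariant monomials of the $\tilde P_i$ are discarded --- and (b) the criterion that an ideal of $\CC[K_N^\dual]$ has finite-dimensional quotient precisely when it contains a monomial on each ray of $K_N^\dual$, which the paper dispatches with a grading remark in one direction and the word ``clearly'' in the other. Your orbit--face argument for the ``clearly'' direction and your full ideal-theoretic equality $\Ia={\rm Jac}(W)\cap\CC[K_N^\dual]$ (rather than just the statement for the monomials $[v]$) are fleshed-out versions of what the paper leaves implicit, not a different method.

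One caveat, which you share with the paper rather than introduce: in step (ii) you assert that ``the positive quasi-homogeneity weights $q_n$ place $\deg^\dual=\sum_n q_n n$ in the interior of $K_N$.'' In the unified setting of Section \ref{sec.unification} the existence of \emph{positive} such weights is not among the stated hypotheses; it is equivalent to $\deg^\dual\in{\rm int}(K_N)$, which holds in the \BH and \BB specializations but not in general. It is genuinely needed: for $M=N=\ZZ^2$, $\Delta=\{(1,-1),(1,1)\}$, $\Delta^\dual=\{(1,0),(1,1)\}$, $\deg=\deg^\dual=(1,0)$, all the stated requirements hold, $W=f_1x+f_2xy^2$, $\CC[K_N^\dual]=\CC[x,y]$, and $\CC[x,y]/{\rm Jac}(W)$ is two-dimensional, yet no power of $y$ (the monomial of the ray through $(0,1)$) lies in ${\rm Jac}(W)$, since $y$ is invertible modulo it; so ``finite-dimensional $\Rightarrow$ monomials on rays'' fails there. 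The paper's own phrase ``only a finite number of gradings occur, thus for a ray there is $[v]\in{\rm Jac}(W)$'' needs the same positivity (at least $v\cdot\deg^\dual\neq 0$ on every ray generator $v$ of $K_N^\dual$). So you should state this as an explicit standing assumption rather than derive it, but this is a defect of the proposition's stated generality, not of your argument relative to the paper's.
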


\begin{proof}
Suppose that the above quotient is finite-dimensional. It is clearly $M$-graded,
which means that only a finite number of gradings occur. Thus for a ray of $K_N^\dual$
there is an element $v$ on it such that 
$\prod_{n\in\Delta^\dual}x_n^{v\cdot n}\in {\rm Jac}(w)$. The rest of the argument
follows the proof of Proposition \ref{combcond}. 
In the other direction, the condition of Definition \ref{unified} is equivalent to 
the property that $\CC[K_N^\dual]\cap {\rm Jac}(W)$ contains elements $[v]$,
at least one for each ray. Since it is clearly an ideal of $\CC[K_N^\dual]$, 
this implies that the quotient is finite-dimensional.
\end{proof}

\begin{remark}
The following combinatorial condition is necessary, but is perhaps not 
sufficient. For every facet $\theta$  of $K_N$ there exists an element $m\in \Delta$
and an element $n\in \theta$ such that $m\cdot n\leq 1$ 
and $m\cdot \hat n=0$ for all other $\hat n\in \theta$. Indeed, one picks 
$m$ and $n$ in Definition \ref{unified} that contribute nontrivially to 
the graded piece $v$ and use $\theta$ which is dual to the ray through $v$.
Of course, one also needs to impose the dual condition, which does not appear
to follow directly from the original condition.
\end{remark}

\subsection{Vertex algebras of toric mirror symmetry} In the setting of the previous subsection,
we can define vertex algebras $V_{f,g}$ and their chiral rings, for 
the generic choices of $f$ and $g$.

\begin{definition} Let $(M,N,\Delta,\Delta^\dual,\deg,\deg^\dual)$ be 
the discrete data of toric mirror symmetry. Let $f$ and $g$ be coefficient
functions. We define the vertex algebra $V_{f,g}$ as the cohomology 
of the lattice vertex algebra $\F_{M\oplus N}$ by the differential
$$
D_{f,g}:={\rm Res}_{z=0}(\sum_{m\in\Delta}f(m) m^{ferm}(z)
\ee^{\int m^{bos}(z)}
+\sum_{n\in \Delta^\dual} g(n)n^{ferm}(z)
\ee^{\int n^{bos}(z)}).
$$
\end{definition}

Recall the definition of four subcomplexes of $\F_{M\oplus N}$ given
in Proposition \ref{fourC}.
\begin{theorem}\label{univert}
If $f$ and $g$ are generic, then the vertex algebra $V_{f,g}$ is 
of $\sigma$-model type. Its $A$ ring may be calculated either 
as  the cohomology of the subcomplex 
${\mathcal C}_{K_N^\dual,K_N-\deg^\dual,M}$ or the cohomology of 
${\mathcal C}_{K_M,K_M^\dual- \deg^\dual,M}$. Its $B$ ring may be calculated
either as cohomology of ${\mathcal C}_{K_N^\dual-\deg,K_N,N}$
or the cohomology of ${\mathcal C}_{K_M-\deg,K_M^\dual,N}$.
There is a spectral flow isomorphism (as vector spaces) between the 
$A$ ring and the $B$ ring, given by considering the appropriate terms 
of the OPEs with the fields  ${\rm SF}_\pm(z)$  considered in Proposition \ref{sf}.
\end{theorem}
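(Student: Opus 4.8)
The plan is to observe that the hypotheses of Definition \ref{unified} are precisely what was \emph{proved} in Proposition \ref{combcond} (together with its dual) in the \BH case, so that the entire argument of Sections \ref{sec.BB-BH-voa} and \ref{sec.main} transfers with only cosmetic changes. First I would establish the analogs of the Key Lemma, Theorem \ref{keylemma}, and its dual. By assumption the first condition of Definition \ref{unified} gives, for each ray of $K_N^\dual$, a lattice point $[v]$ written as $\sum_{n\in\Delta^\dual}(P_n\sum_{m\in\Delta}f(m)(m\cdot n)[m])$ whose monomials $[w]$ obey $w\cdot n\geq -1$ and $w\cdot\hat n\geq 0$ for $\hat n\neq n$; dividing by $[v]$ as in Corollary \ref{corokey} produces the homotopy data. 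The operator $R_\theta={\rm Res}_{z=0}\sum_{n\in\Delta^\dual}n^{ferm}(z)\hat P_n(z)$ is then built exactly as before, and the verification that its anticommutator with $D_{f,g}$ equals the identity plus an $\alpha_\theta$ that strictly increases $m_0\cdot\bullet$ uses only the OPE bounds controlled by those inequalities, together with the Pauli-type cancellation of Remark \ref{pauli} in the delicate $n=\hat n$ case. Thus the cohomology of $\F_{M\oplus N}$ by $D_{f,g}$ equals that of $\F_{K_M\oplus N}$, and dually that of $\F_{M\oplus K_N}$.

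With these reductions in hand, the second step is to invoke Proposition 6.3 and Proposition 6.5 of \cite{chiralrings} on the $D_f$-cohomology of $\F_{K_M\oplus N}$: this yields $H_A\geq 0$, identifies the $H_A=0$ part as the isomorphic image of $\F_{K_M\oplus K_M^\dual-\deg^\dual}$, and — after first taking $D_g$-cohomology to cut $m$ down to finitely many classes, which is exactly the finiteness furnished by Proposition \ref{prop.unified} — shows that the $L_{(1)}$-eigenspaces are finite-dimensional, so that $V_{f,g}$ is of $\sigma$-model type. Here, unlike the pure \BH case, $K_M$ and $K_N$ are genuinely general full-dimensional cones rather than simplicial ones, so this is where the full strength of \cite{chiralrings}, including the fan subdivisions of $K_M$, is actually needed; this is the situation anticipated in the proof of Theorem \ref{main}. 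One must check that the arguments of \cite{chiralrings} use nothing about the reflexive Gorenstein structure beyond $\deg\cdot n=m\cdot\deg^\dual=1$, $m\cdot n\geq 0$, and full dimensionality, all now assumed.

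The third step is formal. The four subcomplexes are defined verbatim as in Proposition \ref{fourC}, and the OPE-and-grading computation there identifies the $H_A=0$ (resp. $H_B=0$) part of the reduced Fock space with the cohomology of ${\mathcal C}_{K_M,K_M^\dual-\deg^\dual,M}$ (resp. ${\mathcal C}_{K_N^\dual-\deg,K_N,N}$), producing the $A$ ring and the $B$ ring. Finally the spectral flow fields ${\rm SF}_\pm(z)$ of Proposition \ref{sf} remain well defined, since their OPEs with the summands of $D_{f,g}$ are still nonsingular under $\deg\cdot n=m\cdot\deg^\dual=1$ and $m\cdot n\geq 0$; reading off the leading OPE terms with ${\rm SF}_\pm$ supplies, exactly as in Theorem \ref{main}, mutually inverse maps between the second and fourth complexes and between the first and third complexes, hence both the alternative descriptions of each ring and the vector-space spectral flow isomorphism between the $A$ and $B$ rings.

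The main obstacle is the second step: one must confirm that Proposition 6.3 of \cite{chiralrings}, originally phrased for reflexive Gorenstein cones arising from nef partitions, really relies only on the combinatorial features retained in Definition \ref{unified}, so that its fan-subdivision reduction of the general cone $K_M$ still goes through. The Key Lemma is essentially automatic once Definition \ref{unified} is granted, and the spectral-flow step is purely formal; the genuine subtlety lives entirely in verifying that the \BB-style chiral-ring computation survives the passage to a cone that need not come from a nef partition.
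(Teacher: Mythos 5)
Your proposal is correct and takes essentially the same route as the paper: the paper's proof simply observes that the argument of Theorem \ref{main} goes through in this more general situation, with the conditions of Definition \ref{unified} playing the role that Proposition \ref{combcond} played in the Key Lemma, strong nondegeneracy in the sense of \cite{chiralrings} giving the $\sigma$-model property and one calculation of each chiral ring, and the spectral flow supplying the other calculation. Your observation that the fan subdivisions in Proposition 6.3 of \cite{chiralrings} are genuinely needed here (unlike the simplicial \BH case) is precisely the point the paper itself flags in the remark inside the proof of Theorem \ref{main}.
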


\begin{proof}
We observe that the proof of  Theorem \ref{main} goes through in this
more general situation. Specifically, the condition of Definition \ref{combcond}
assures that the proof of the Key Lemma still works. Then one needs to
further assume that $f$ and $g$ are strongly nondegenerate in the sense
of \cite{chiralrings}, to show that $V_{f,g}$ is of $\sigma$-model type and
to give one calculation of each of the chiral rings. Another calculation
is obtained via the spectral flow isomorphism, as in the proof of Theorem 
\ref{main}.
\end{proof}

\section{Open problems}\label{sec.open}
In this section we comment on the open questions related to our 
construction.

\medskip\noindent
{\bf Classification.} Similar to the \BB situation, one may hope
to classify all $\Delta$ and $\Delta^\dual$ which satisfy the conditions
of Definition \ref{unified}, in the case of $\deg\cdot\deg^\dual=1$
and $\hat c=3$. As in \cite{KS2}, this amounts to a classification of
certain four-dimensional polytopes, although reflexivity condition is 
now somewhat relaxed. Given that the number of nonequivalent 
dimension four reflexive polytopes is around $5\cdot 10^8$
this can be potentially an arduous task. The first step in this direction
would be a reformulation of Definition \ref{unified} in purely combinatorial
terms which do not involve the coefficient functions.

\medskip\noindent
{\bf Frobenius algebra structure.} Our construction endows
the $A$ and $B$ rings of \BH mirror symmetry with the structure
of Frobenius algebra, by evaluating against the top class (a generator
of bidegree $(\hat c,\hat c)$). 

\medskip
It is important to check that the product in our rings coincides with the 
known constructions whenever applicable. While this should be 
reasonably straightforward for \BH models, there appear to be 
serious difficulties in the general setting of Definition \ref{unified}.
At the moment, we can not even prove that the corresponding Hodge diamond
is symmetric. There are also some technical difficulties in comparing
the usual Hessian (used in \BH construction) with the logarithmic Hessian
(for \BB construction). It is conceivable that Definition \ref{unified} may
need to be somehow augmented.

\medskip
It is an open problem in \BB setting to define the product on the chiral
rings without the use of vertex algebras. While the \BH case might be a bit
easier, it is far from obvious. In particular there is a general procedure 
of orbifoldizing Frobenius algebras due to Kaufmann, which often
significantly narrows down the space of possible products, see \cite{Kau3}.
The first step in this direction would be an understanding of the analogs 
of the subrings of Remark \ref{diag}.

\medskip\noindent
{\bf Elliptic genera of Berglund-H\"ubsch models.}
The elliptic genera of \BH models have been calculated in \cite{BeHe}
where it was observed that they satisfy the expected duality.
This invariant should encode the double-graded superdimension of $V_{f,g}$,
which will naturally behave well under passing to the mirror. 
To prove the comparison, one might be able to use 
the method of \cite{BLInv}, which appears to be applicable to the \BH setting.

\medskip\noindent
{\bf Comments on geometry.}
The results of Section \ref{sec.V} may be useful in the setting of 
\cite{Chiodo-Ruan}, where the Landau-Ginzburg model is compared to the 
orbifold theory.  It is plausible that the spaces of the orbifold theory are formally obtained from
those of LG theory by replacing one of the semigroup rings by a partial semigroup ring,
which corresponds to the fan of the ambient toric variety.


\begin{thebibliography}{9999999}


\bibitem[Ba1]{batduke}
V. Batyrev, \emph{Variations of the mixed Hodge structure of affine hypersurfaces in algebraic 
tori,} Duke Math. J. 69 (1993), no. 2, 349--409.

\bibitem[Ba2]{Bat.dual}
V. Batyrev, \emph{Dual polyhedra and mirror symmetry for Calabi-Yau hypersurfaces in toric varieties,} 
J. Algebraic Geom. 3 (1994), no. 3, 493--535.

\bibitem[BaN]{Bat.Nill}
V. Batyrev, B. Nill, \emph{Combinatorial aspects of mirror symmetry. Integer points in polyhedra.}
Geometry, number theory, representation theory, algebra, optimization, statistics.
35--66, Contemp. Math., 452, Amer. Math. Soc., Providence, RI, 2008.

\bibitem[BeHe]{BeHe}
P. Berglund, M. Henningson, 
\emph{Landau-Ginzburg orbifolds, mirror symmetry and the elliptic genus.} 
Nuclear Phys. B 433 (1995), no. 2, 311--332.

\bibitem[BeH]{BeHu}
P. Berglund, T. H\"ubsch, \emph{A Generalized Construction of Mirror Manifolds.} 
Nuclear Physics B, 393 (1993), no. 1-2, 377--391.

\bibitem[Bo1]{Borisov.preprint}
L. Borisov, \emph{Towards the Mirror Symmetry for Calabi-Yau Complete intersections in Gorenstein Toric Fano Varieties.} Preprint arXiv:alg-geom/9310001.

\bibitem[Bo2]{borvert}
L. Borisov, \emph{Vertex algebras and mirror symmetry.} Comm. Math. Phys. 215 (2001), no. 3, 517--557.

\bibitem[Bo3]{chiralrings}
L. Borisov, \emph{Chiral rings of vertex algebras of mirror symmetry.} 
Math. Z. 248 (2004), no. 3, 567--591. 

\bibitem[BoM]{BM}
L. Borisov, A. Mavlyutov, \emph{ String cohomology of Calabi-Yau hypersurfaces via mirror symmetry.} Adv. Math. 180 (2003), no. 1, 355--390.

\bibitem[BoL]{BLInv}
L. Borisov, A. Libgober, \emph{Elliptic genera of toric varieties and applications to mirror symmetry.} Invent. Math. 140 (2000), no. 2, 453--485.

\bibitem[CaOGP]{mirror}
P. Candelas, X. de la Ossa, P. Green, L. Parkes, \emph{An exactly soluble superconformal
theory from a mirror pair of Calabi-Yau manifolds.} Phys. Lett. B 258 (1991), no. 1-2, 118--126. 

\bibitem[ChR]{Chiodo-Ruan}
A. Chiodo, Y. Ruan, \emph{LG/CY correspondence: the state space isomorphism.}
Preprint  arXiv:0908.0908.

\bibitem[Co]{Cox}
D. Cox, \emph{The homogeneous coordinate ring of a toric variety.}
J. Algebraic Geom. 4 (1995), no. 1, 17--50. 


\bibitem[Kac]{Kac}
V. Kac, \emph{Vertex algebras for beginners. Second edition.} University Lecture Series, 10.
American Mathematical Society, Providence, RI, 1998.

\bibitem[Kau1]{Kau1}
R. Kaufmann, \emph{Orbifold Frobenius algebras, cobordisms and monodromies.} 
Orbifolds in mathematics and physics (Madison, WI, 2001), 135--161, 
Contemp. Math., 310, Amer. Math. Soc., Providence, RI, 2002.

\bibitem[Kau2]{Kau2}
R. Kaufmann, \emph{Orbifolding Frobenius algebras.} 
Internat. J. Math. 14 (2003), no. 6, 573--617.

\bibitem[Kau3]{Kau3} 
R. Kaufmann, \emph{Singularities with symmetries, orbifold Frobenius algebras and 
mirror symmetry.} Gromov-Witten theory of spin curves and orbifolds, 67--116, 
Contemp. Math., 403, Amer. Math. Soc., Providence, RI, 2006.

\bibitem[Kra]{Krawitz}
M. Krawitz, \emph{FJRW rings and Landau-Ginzburg Mirror Symmetry.}
Preprint arXiv:0906.0796.

%\bibitem[Kre] 
%M. Kreuzer, \emph{The mirror map for invertible LG models.} 
%Phys. Lett. B 328 (1994), no. 3-4, 312--318.

\bibitem[KreS1]{KS}
M. Kreuzer, H. Skarke, 
\emph{On the classification of quasihomogeneous functions.}
 Comm. Math. Phys. 150 (1992), no. 1, 137--147. 

\bibitem[KreS2]{KS2}
M. Kreuzer, H. Skarke, 
\emph{Complete classification of reflexive polyhedra in four dimensions.} 
Adv. Theor. Math. Phys. 4 (2000), no. 6, 1209--1230.

\bibitem[LeVW]{LVW}
W. Lerche, C. Vafa, N. P. Warner, \emph{Chiral rings in $N=2$ superconformal theories.} Nuclear Phys. B 324 (1989), no. 2, 427--474. 

\bibitem[MSV]{MSV}
F. Malikov, V. Schechtman, A. Vaintrob,
{\em Chiral de Rham complex}, Comm. Math. Phys.
204 (1999), no. 2, 439--473.

\end{thebibliography}
\end{document}